\newcommand{\dd}{\mathrm{d}}
\DeclareMathOperator{\dom}{dom}
\DeclareMathOperator{\sgn}{sgn}
\renewcommand{\theenumi}{(\roman{enumi})}
\newtheorem{assumption}{Assumption}[section]
\begin{document}
\title{A Semigroup Point Of View On Splitting Schemes For Stochastic (Partial) Differential Equations
\thanks{The first author gratefully acknowledges partial support by the FWF grant W8. Financial support from the ETH Foundation is gratefully acknowledged.}
}
\titlerunning{A Semigroup Point Of View On Splitting Schemes For S(P)DEs}
\author{
Philipp D\"orsek
\and 
Josef Teichmann}
\institute{Philipp D\"orsek \at Vienna University of Technology, Wiedner Hauptstra\ss e 8-10, A-1040 Vienna, Austria,
\\\email{philipp.doersek@tuwien.ac.at}
\and
Josef Teichmann \at ETH Z\"urich, D-MATH, R\"amistra\ss e 101, 8092 Z\"urich, Switzerland,
\\\email{jteichma@math.ethz.ch}
}

\maketitle

\begin{abstract}
	We construct normed spaces of real-valued functions with controlled growth on possibly infinite-dimensio\-nal state spaces such that semigroups of positive, bounded operators $(P_t)_{t\ge 0}$ thereon with $\lim_{t\to 0+}P_t f(x)=f(x)$ are in fact strongly continuous. This result applies to prove optimal rates of convergence of splitting schemes for stochastic (partial) differential equations with linearly growing characteristics and for sets of functions with controlled growth. Applications are general Da Prato-Zabczyk type equations and the HJM equations from interest rate theory.
	\subclass{Primary 60H15, 65C35; Secondary 46N30}
\end{abstract}

\section{Introduction}

In applications, we often apply mathematical theory to models,
even though the assumptions of the respective theory are not completely satisfied. For instance, when we consider
the Heston stochastic volatility model, it is clear that the involved vector fields are not everywhere Lipschitz continuous on the state space, and that linearly, not to mention exponentially growing payoffs, do not fall into the class of test functions where a guaranteed rate of convergence is provided. Nevertheless we do not hesitate to apply Euler or higher order schemes and we clarify -- if we have time -- the raised open questions in a case-by-case study.

There arises the interesting and promising question of whether there is a general statement possible that embeds different specialised results into a general framework.
In particular in infinite dimension this larger picture is fairly unknown.
This is due to the additional phenomenon of unboundedness, which is impossible to circumvent in concrete cases, see for instance \cite{BayerTeichmann2008}.

In this work, we want to provide this larger picture for splitting schemes for S(P)DEs.
This allows us to deal with unbounded payoff functions and with certain kinds of singularities of the local characteristics. 
It is well-known that the world of stochastic Markov processes on general state spaces is tied to strongly continuous semigroups in two ways: either through the Feller property, or through invariant measures. 
In both cases we can construct an appropriate Banach space, $C_0(X) $ and $ L^p(X,\mu) $, respectively, where the Markov semigroups act in a strongly continuous way. 
Strong continuity is in many senses a ``via regia'' towards approximation schemes via splitting schemes (e.g.~Trotter-type formulae, Chernov's theorem, etc), and therefore a very desirable feature. 
However, neither the existence of invariant measures nor the Feller property are generic properties of Markov processes -- this holds true in particular in infinite dimension. 
The situation is even worse for the Feller property, where we have a strong connection to locally compact state spaces and continuous functions vanishing at infinity.
It therefore seems natural to ask for a framework extending the Feller property towards unbounded payoffs and non-locally compact spaces. 
Moreover, the framework should be as generic as possible to remain applicable to general SPDEs. 
From the viewpoint of applications, the new concept is useful if we are able to prove rates of convergence for substantially larger classes of payoffs and equations with the presented method.

Let us first fix what we mean by a \emph{splitting scheme} for Markov processes (cf.~\cite{TanakaKohatsuHiga2009} for a similar, abstract approach, or \cite{TalayTubaro1991} for a more concrete approach, both in the finite dimensional setting). 
Let $ x(t,x_0) $ be a Markov process on a (measurable) state space $ X $ and assume that
\begin{itemize}
	\item there is a (some) Banach space $ \mathcal{B}(X) $ of real-valued functions with Markov semigroup $ P_t f(x_0) := \mathbb{E}[f(x(t,x_0))] $, for $ f \in \mathcal{B}(X)$,  $ t \geq 0 $ and $ x_0 \in X $, acting on $ \mathcal{B}(X) $ as a semigroup of linear operators bounded by $ M \exp(\omega t) $ for some $ M \geq 1 $ and some real $ \omega $;
	\item there are semigroups $ P^{(1)}, \ldots, P^{(k)} $ of linear operators on $ \mathcal{B}(X) $ such that the weighted composition
		\begin{equation}
			Q_{(\Delta t)}  := \sum_{j=1}^K \lambda_j P^{(i_1)}_{\delta^j_1 \Delta t} \dots  P^{(i_l)}_{\delta^j_l \Delta t}, 
		\end{equation}
		for some real numbers $ \delta^j_i \geq 0 $ and $ \Delta t > 0$, and some weights $ \lambda_j \geq 0 $, form a family of operators power-bounded on some interval $ [0,T] $, in the sense that $ {(Q_{(\Delta t)})}^{m} $ is bounded in operator norm for all $ 1 \leq m \leq n $ and $ n\Delta t \in [0,T] $; and
	\item the short time asymptotic expansions of order $ p>1 $ of the operators $P_{\Delta t}$ and $Q_{(\Delta t)}$ coincide on some subspace $ \mathcal{M} \subset \mathcal{B}(X) $, i.e.~ 
		\begin{equation}
			\lVert P_{\Delta t} P_s f - Q_{(\Delta t)} P_s f \rVert \leq C_{f} \Delta t^p 
		\end{equation}
		for $ f \in \mathcal{M} $ and $ s, \Delta t \in [0,T]$.
\end{itemize}
Under these assumptions we can readily prove that
\begin{equation}
	\lim_{n \to \infty} {(Q_{(\frac{t}{n})})}^n f = P_t f
\end{equation}
for $ f \in \mathcal{M} $ and $t\le T$. The proof is well-known and simple due to the telescoping sum
\begin{equation}
	{(Q_{(\frac{t}{n})})}^n f - P_t f  = \sum_{i=1}^{n-1} {(Q_{(\frac{t}{n})})}^{(n-i)} \bigl ( Q_{(\frac{t}{n})} -  P_{\frac{t}{n}}) P_{\frac{it}{n}} f
\end{equation}
for $ t \in [0,\varepsilon] $ and $ f \in \mathcal{M} $. 
We even obtain weak convergence of order $p-1$ on $\mathcal{M}$, i.e.
\begin{equation}
	\lVert P_t f - (Q_{(\frac{t}{n})})^n f\rVert
	\le
	C_f \bigl(t/n)^{p-1}.
\end{equation}
Due to the boundedness properties of the involved operators the convergence extends to the closure of $ \mathcal{M} $.
The rate of convergence, however, is then lost.
While splitting schemes as formulated above have an order bound for positive step sizes \cite{BlanesCasas2005} and the choice of $\delta$ and $\lambda$ in the Ninomiya-Victoir splitting \cite{NinomiyaVictoir2008} thus yields the optimal possible order, the above approach can also be taken for approximations $Q_{(\Delta t)}$ which are not necessarily derived from a splitting scheme. 
The authors use similar methods to derive rates of convergence for cubature methods for stochastic partial differential equations in a forthcoming paper.

Using Lyapunov-type functions $ \psi $, we shall construct Banach spaces $ \mathcal{B}^{\psi}(X) $ where the previous requirements are satisfied for Euler- and Ninomiya-Victoir-type schemes. 

Even in finite dimensions this is -- in its generality -- a new result and
can be seen as widening \cite{Kusuoka2001} to the case of unbounded
coefficients and unbounded claims, further extending the work from
\cite{TanakaKohatsuHiga2009}. Its importance, however, lies in its
applicability to problems with infinite dimensional state spaces. We achieve
this in a unified way by putting the theory of \cite{TanakaKohatsuHiga2009}
on an abstract theoretical basis through developping a notion of generalised
Feller semigroups.

We outline our ideas in a finite dimensional example, but it is the goal of this work to show that a corresponding result can also be achieved for SPDEs.

\begin{example}
	Consider a stochastic differential equations
	\begin{equation}
		\dd x(t,x_0) = V (x(t,x_0)) \dd t + V_1(x(t,x_0)) \dd B_t
	\end{equation}
	with $ \mathrm{C}^3 $-bounded vector fields $ V, V_1 $ driven by a one-dimensional Brownian motion $(B_t)_{t\ge 0}$.
	It is well known that we can consider the Markov process $ x(t,x_0) $ and its semigroup $ (P_t)_{t\ge 0} $ on the space $ \mathrm{C}_0(\mathbb{R}^N) $ of continuous functions decaying at infinity, endowed with the norm $\lVert f\rVert_{\infty}:=\sup_{x\in\mathbb{R}^N}\lvert f(x)\rvert$. 
	Since $ \lvert P_t f(x_0)\rvert = \lvert \mathbb{E}[f(x(t,x_0))]\rvert \leq \lVert f \rVert_{\infty} $ and $\lim_{x_0\to\infty}P_t f(x_0)=0$ uniformly, we know that $ P $ acts as a semigroup of contractions on $ \mathrm{C}_0(\mathbb{R}^N) $.
	Let us introduce a splitting, i.e.~two semigroups $ P^1 $ and $ P^2 $ associated with the equations
	\begin{equation}
		\dd z^1(t,x_0) = V_0 (z^1(t,x_0)) \dd t
	\end{equation}
	and
	\begin{equation}
		\dd z^2(t,x_0) = V_1 (z^2(t,x_0)) \circ\dd B_t,
	\end{equation}
	where $ V_0 $ is the Stratonovich corrected drift term. 
	Apparently, these two semigroups are contractions, too, and it remains to show that we have a short time asymptotic expansion on some subspace $ \mathcal{M}\subset\mathrm{C}_0(\mathbb{R}^n) $. For $ Q_{(\Delta t)}:= P^1_{\Delta t} P^2_{\Delta t} $ we can choose any $\mathrm{C}^3$-function $f $, which is bounded with compact support, and we obtain by It\^o's formula
	$$
	\lVert P_{\Delta t} P_s f - Q_{(\Delta t)} P_s f \rVert_{\infty} \leq C_f \Delta t^2
	$$
	for $ \Delta t $ in some small interval $ [0,\varepsilon] $. 
	The previous result then leads to the desired convergence, which has the well-known meaning of weak convergence of the associated processes to $ x(t,x_0) $. 
	However, two questions remain at this point:
	is it possible to obtain the convergence also for functions, which are not compactly supported, or not even globally bounded?
	If we want to relax towards $ f \notin\mathrm{C}_0(\mathbb{R}^N) $, we have to give up linear growth of vector fields and replace it by boundedness. 
	This raises the important question: is it possible to obtain rates of convergence in a generic setting for unbounded, non-compactly supported payoffs $ f $ and vector fields with linear growth? 

	The answer to the first part of this question will also answer the second part.
	We introduce a weight function $ \psi : \mathbb{R}^N \to (0,\infty) $ such that
	\begin{equation}
		\exp(- \alpha t ) \psi(x(t,x_0))
	\end{equation}
	is a supermartingale for every $ x_0 \in \mathbb{R}^N $. 
	We can easily choose such weight functions, even if the vector fields are linearly growing, as polynomials, and we can do so simultaneously for $ x , z^1 , z^2 $. 
	We need the uniform bound on moments of diffusions with linearly growing vector fields and It\^o's formula. Then we consider the Banach spaces $ \mathcal{B}^{\psi} (X) $ of those functions which can be approximated by bounded continuous functions with respect to the norm 
	$$ 
	\lVert f \rVert_{\psi} := \sup_{x \in \mathbb{R}^N} \frac{\lvert f(x)\rvert}{\psi(x)}.
	$$

	Apparently all semigroups are extending to this space and their respective norms are bounded by $ \exp(\alpha t) $. This finally yields that we are again in the assumptions of the previous meta-theorem, i.e.
	$$
	\lVert P_t f -  {(Q_{(\frac{t}{n})})}^n f \rVert_{\psi} \leq C_f \frac{1}{n}
	$$
	as $ n \to \infty $, for $ f \in \mathcal{M} $, which are $\mathrm{C}^2$-functions with appropriate boundedness relative to $\psi$. 
	Notice that we have extended the previous result twofold:
	in the present setting, both linearly growing volatility vector fields and linearly growing payoffs are allowed.
	The price to pay was that all results are with respect to a weighted supremum norm.
\end{example}

\section{Riesz Representation for Weighted Spaces}
In this section we show that we can actually obtain a variant of the Riesz representation theorem even on spaces that are not locally compact. Consider a completely regular Hausdorff topological space $X$ (i.e.~$\mathrm{T}_{3.5}$).
\begin{definition}
	A function $\psi\colon X\to(0,\infty)$ is called \emph{admissible weight function} if the sets $K_R:=\left\{ x\in X\colon \psi(x)\le R \right\}$ are compact for all $R>0$.
\end{definition}
Such a function $\psi$ is lower semicontinuous and bounded from below, and any such space $X$ is $\sigma$-compact due to $\bigcup_{n\in\mathbb{N}}K_n=X$. We call the pair $(X,\psi)$ a \emph{weighted space}.

Consider the vector space 
\begin{equation}
	\mathrm{B}^\psi(X;Z):=\left\{ f\colon X\to Z \colon \sup_{x\in X}\psi(x)^{-1}\lVert f(x)\rVert<\infty \right\}
\end{equation}
of $ Z $-valued functions $ f $, $Z$ a Banach space, equipped with the norm 
\begin{equation}
	\lVert f\rVert_{\psi}:=\sup_{x\in X}\psi(x)^{-1}\lVert f(x) \rVert,
\end{equation}
turning it into a Banach space itself. 
It is clear that $\mathrm{C}_b(X;Z)\subset\mathrm{B}^\psi(X;Z)$, where $\mathrm{C}_b(X;Z)$ denotes the space of continuous, bounded functions $f\colon X\to Z$, endowed with the norm $\lVert f\rVert_{\mathrm{C}_b(X;Z)}:=\sup_{x\in X}\lVert f(x)\rVert$.
\begin{definition}
	We define $\mathcal{B}^{\psi}(X;Z)$ as the closure of $\mathrm{C}_b(X;Z)$ in $\mathrm{B}^{\psi}(X;Z)$. The normed space $\mathcal{B}^{\psi}(X;Z)$ is a Banach space.
\end{definition}
\begin{remark}
	Suppose $X$ compact. Then the choice $\psi(x) = 1$ for $ x \in X $ is admissible. On general spaces weights $ \psi $ necessarily grow due to the compactness of $K_R$, which means that $f\in\mathcal{B}^{\psi}(X;Z)$ typically is unbounded, but its growth is bounded by the growth of $\psi$.
	Therefore, we call elements of $\mathcal{B}^{\psi}(X;Z)$ \emph{functions with growth controlled by $ \psi$}.
\end{remark}
We set $\mathcal{B}^\psi(X):=\mathcal{B}^\psi(X;\mathbb{R})$.
\begin{theorem}[Riesz representation for $\mathcal{B}^\psi(X)$]\label{theorem:rieszrepresentation} 
	Let $\ell\colon\mathcal{B}^\psi(X)\to\mathbb{R}$ be a continuous linear functional.
	Then, there exists a finite signed Radon measure $\mu$ on $X$ such that
	\begin{equation}
		\ell(f)=\int_{X}f(x)\mu(\dd x)\quad\text{for all $f\in\mathcal{B}^\psi(X)$.}
	\end{equation}
	Furthermore,
	\begin{equation}
		\label{eq:rieszrepresentation-psiintbound}
		\int_{X}\psi(x)\lvert \mu\rvert(\dd x) = \lVert \ell\rVert_{L(\mathcal{B}^\psi(X),\mathbb{R})},
	\end{equation}
	where $\lvert\mu\rvert$ denotes the total variation measure of $\mu$.
\end{theorem}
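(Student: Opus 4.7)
The plan is to reduce to the classical Riesz representation on the Stone--\v{C}ech compactification $\beta X$ and then use the weight $\psi$ to relocate the mass back into $X$. Since $\psi$ attains its positive minimum on any nonempty $K_R$, the constant $c:=\inf_X\psi$ is strictly positive, and consequently $\lVert f\rVert_\psi\le\lVert f\rVert_\infty/c$ for every $f\in\mathrm{C}_b(X)$. The restriction $\ell|_{\mathrm{C}_b(X)}$ is therefore bounded in the uniform norm, and via the isometry $\mathrm{C}_b(X)\cong\mathrm{C}(\beta X)$ the classical Riesz theorem furnishes a finite signed Radon measure $\tilde\mu$ on $\beta X$ such that $\ell(f)=\int_{\beta X}f^{\beta}\,\dd\tilde\mu$ for $f\in\mathrm{C}_b(X)$, where $f^{\beta}$ denotes the unique continuous extension.

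The crucial weighted input is the estimate $\lVert f\rVert_\psi\le\lVert f\rVert_\infty/R$ whenever $f\in\mathrm{C}_b(X)$ vanishes on $K_R$. Consequently, for every $\phi\in\mathrm{C}(\beta X)$ with $\lvert\phi\rvert\le 1$ and $\operatorname{supp}\phi\subset\beta X\setminus K_R$, one has $\lvert\int\phi\,\dd\tilde\mu\rvert=\lvert\ell(\phi|_X)\rvert\le\lVert\ell\rVert/R$. Since $K_R$ is compact and therefore closed in $\beta X$, the standard variational characterisation of $\lvert\tilde\mu\rvert$ on open sets then yields the tightness bound $\lvert\tilde\mu\rvert(\beta X\setminus K_R)\le\lVert\ell\rVert/R\to 0$. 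Because $X=\bigcup_R K_R$ is $F_\sigma$, and hence Borel, in $\beta X$, $\tilde\mu$ is concentrated on $X$, and I define the finite signed Radon measure $\mu$ on $X$ by $\mu(B):=\tilde\mu(B')$ for any Borel $B'\subset\beta X$ with $B'\cap X=B$. The representation $\ell(f)=\int_Xf\,\dd\mu$ is then immediate on $\mathrm{C}_b(X)$ and will extend to $\mathcal{B}^\psi(X)$ by density as soon as $\int\psi\,\dd\lvert\mu\rvert<\infty$ is known.

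For the norm identity, the direction $\lVert\ell\rVert\le\int\psi\,\dd\lvert\mu\rvert$ is a one-line estimate. The reverse inequality $\int\psi\,\dd\lvert\mu\rvert\le\lVert\ell\rVert$ is what I expect to be the main obstacle, and I would establish it $K_R$-by-$K_R$. On the compact, and hence normal, space $K_R$, the lower semicontinuous function $\psi|_{K_R}$ is the increasing pointwise limit of continuous $\psi_n\ge 0$, and $\sgn(\dd\mu/\dd\lvert\mu\rvert)$ is approximable in $L^1(\lvert\mu\rvert)$ by continuous $s_n\in\mathrm{C}(K_R;[-1,1])$ via Lusin's theorem; the products $h_n:=\psi_n s_n$ satisfy $\lvert h_n\rvert\le\psi$ on $K_R$ and $\int_{K_R}h_n\,\dd\mu\to\int_{K_R}\psi\,\dd\lvert\mu\rvert$ by dominated convergence. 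I then extend $h_n$ to $\tilde h_n\in\mathrm{C}_b(X)$ with $\lVert\tilde h_n\rVert_\infty\le R$ via the $\mathrm{C}^{*}$-embedding of compacta in Tychonoff spaces (Tietze in $\beta X$, then restrict) and multiply by a continuous cut-off $\chi\colon X\to[0,1]$ equal to $1$ on $K_R$ and supported in an open neighbourhood $V\supset K_R$ chosen, through outer regularity of $\lvert\mu\rvert$, so that $\lvert\mu\rvert(V\setminus K_R)<\varepsilon/R$. Such a cut-off exists because in a completely regular space any compact set can be separated from a disjoint closed set by a continuous $[0,1]$-valued function. The test function $F:=\tilde h_n\chi\in\mathrm{C}_b(X)$ satisfies $\lVert F\rVert_\psi\le 1$ (using $\psi>R\ge\lVert\tilde h_n\rVert_\infty$ off $K_R$), whence $\ell(F)\le\lVert\ell\rVert$; letting $n\to\infty$ and then $R\to\infty$ via monotone convergence closes the estimate. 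The delicate point throughout is reconciling the mere lower semicontinuity of $\psi$ and the mere complete regularity of $X$ (so Tietze is not directly available) with the global constraint $\lVert F\rVert_\psi\le 1$.
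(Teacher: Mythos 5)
Your proposal is correct in substance but follows a genuinely different, more self-contained route than the paper. Where you build everything by hand --- classical Riesz representation on $\beta X$, the tightness estimate $\lvert\tilde\mu\rvert(\beta X\setminus K_R)\le\lVert\ell\rVert/R$ to push the mass into the $\sigma$-compact Borel set $X=\bigcup_n K_n$, and then an explicit Lusin/Tietze/cut-off construction of test functions $F$ with $\lVert F\rVert_\psi\le 1$ --- the paper outsources both steps to Bourbaki's integration theory: existence of the Radon measure on $X$ follows from verifying ``condition (M)'' of \cite[\S~5 Proposition~5]{Bourbaki1969}, which is exactly your tightness estimate phrased as $\lvert\ell(g)\rvert\le\varepsilon$ for $\lvert g\rvert\le 1$ vanishing on a suitable $K_R$; and the inequality $\int_X\psi\,\dd\lvert\mu\rvert=\sup\{\lvert\ell(g)\rvert\colon g\in\mathrm{C}_b(X),\ \lvert g\rvert\le\psi\}\le\lVert\ell\rVert$ is a direct application of \cite[\S~5 Proposition~1b)]{Bourbaki1969} to the lower semicontinuous $\psi$ --- note that allowing $\lvert g\rvert\le\psi$ rather than $0\le g\le\psi$ absorbs the sign of $\mu$, so no Lusin approximation is needed there. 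Your approach buys independence from the Bourbaki machinery at the cost of length; the paper's buys brevity at the cost of two nontrivial citations. The easy direction and the final density extension are identical in both.

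One technical point to repair: on a general (non-metrizable) compact Hausdorff $K_R$, a lower semicontinuous function need not be an increasing \emph{sequential} limit of continuous functions; it is only the upper envelope of the upward-directed family of its continuous minorants. This is harmless here because $\lvert\mu\rvert$ is Radon, hence $\tau$-smooth, so the integrals converge along the directed family (equivalently, for each $\varepsilon>0$ you can pick a single continuous $0\le g\le\psi$ on $K_R$ with $\int_{K_R}g\,\dd\lvert\mu\rvert\ge\int_{K_R}\psi\,\dd\lvert\mu\rvert-\varepsilon$ and run your construction with that one $g$); but as written the sequence $\psi_n$ need not exist. Everything else --- the positivity $\inf_X\psi>0$, the well-definedness of $\mu$ as the trace of $\tilde\mu$ on the Borel set $X$, and the bound $\lVert\tilde h_n\chi\rVert_\psi\le 1$ via $\psi>R$ off $K_R$ --- checks out.
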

As every such measure defines a continuous linear functional on $\mathcal{B}^\psi(X)$, this completely characterises the dual space of $\mathcal{B}^\psi(X)$.
\begin{proof}
	Clearly, $\ell|_{\mathrm{C}_b(X)}$ is a continuous linear functional on $\mathrm{C}_b(X)$, as
	\begin{equation}
		\lVert f\rVert_{\psi} \le \left( \inf_{x\in X}\psi(x) \right)^{-1}\lVert f\rVert_{\mathrm{C}_b(X)} \quad\text{for $f\in\mathrm{C}_b(X)$}.
	\end{equation}

	We thus have to ensure condition \emph{(M)} of \cite[\S~5 Proposition~5]{Bourbaki1969}. Defining $K:=K_{\varepsilon^{-1}\lVert \ell \rVert_{L(\mathcal{B}^\psi(X),\mathbb{R})}}$, we see that for $g\in\mathrm{C}_b(X)$ with $\lvert g\rvert\le 1$ and $g|_{K}=0$,
	\begin{equation}
		\lVert g\rVert_{\psi} = \sup_{x\in X\setminus K}\psi(x)^{-1}\lvert g(x)\rvert \le \varepsilon \lVert \ell\rVert_{L(\mathcal{B}^{\psi}(X),\mathbb{R})}^{-1}\lVert g\rVert_{\mathrm{C}_b(X)} \le \varepsilon \lVert \ell\rVert_{L(\mathcal{B}^{\psi}(X),\mathbb{R})}^{-1},
	\end{equation}
	and thus $\lvert \ell(g)\rvert \le \varepsilon$. Hence we obtain existence of a finite, uniquely determined signed Radon measure $\mu$ with $\ell(f)=\int_{X}f(x)\mu(\dd x)$ for all $f\in\mathrm{C}_b(X)$ (see also \cite[Chapter~2 Theorem~2.2]{BergChristensenRessel1984}).

	To determine $\int_X \psi(x)\lvert \mu\rvert(\dd x)$, we apply \cite[\S~5 Proposition~1b)]{Bourbaki1969}:
	$\psi$ is lower semicontinuous and every positive $g\in\mathrm{C}_b(X)$ with $g\le\psi$ satisfies $\lVert g\rVert_{\psi}\le 1$. 
	Therefore,
	\begin{equation}
		\int_{X}\psi(x)\lvert\mu\rvert(\dd x) = \sup_{\substack{g\in\mathrm{C}_b(X)\\ \lvert g\rvert\le\psi}}\lvert \ell(g)\rvert \le \lVert \ell\rVert_{L(\mathcal{B}^\psi(X),\mathbb{R})}.
	\end{equation}
	The density of $\mathrm{C}_b(X)$ in $\mathcal{B}^\psi(X)$ yields
	\begin{alignat}{2}
		\lVert \ell\rVert_{L(\mathcal{B}^\psi(X);\mathbb{R})}
		&= \sup_{g\in\mathrm{C}_b(X)}\lVert g\rVert_{\psi}^{-1}\lvert \ell(g)\rvert 
		= \sup_{g\in\mathrm{C}_b(X)}\lVert g\rVert_{\psi}^{-1}\Bigl\lvert \int_X g(x)\mu(\dd x)\Bigr\rvert \notag \\
		&\le
		\int_{X}\psi(x)\lvert\mu\rvert(\dd x).
	\end{alignat}
	Hence, $\int_{X}\psi(x)\lvert\mu\rvert(\dd x)=\lVert\ell\rVert_{L(\mathcal{B}^\psi(X);\mathbb{R})}$.

	For the proof of $\ell(f)=\int_{X}f(x)\mu(\dd x)$ for all $f\in\mathcal{B}^\psi(X)$, note that $f\mapsto\int_{X}f(x)\mu(\dd x)$ defines a continuous linear functional on $\mathcal{B}^\psi(X)$ due to the integrability of $\psi$ with respect to $\lvert\mu\rvert$. 
	As both expressions agree on a dense subset, we obtain the desired equality.
\qed\end{proof}
\begin{remark}
	While the result in \cite[Chapter~2 Theorem~2.2]{BergChristensenRessel1984} is more general, we do not see how to prove $\int_X \psi(x)\lvert\mu\rvert(\dd x)<\infty$ in that situation. 
	However, this bound is important in our further results, see the proof of Theorem \ref{theorem:Ttstrongcont}.
\end{remark}

\begin{corollary}
	Let $\ell\colon\mathcal{B}^{\psi}(X)\to\mathbb{R}$ be a positive linear functional, that is, $\ell(f)\ge 0$ whenever $f(x)\ge 0$ for all $x\in X$. 
	Then, there exists a (positive) measure $\mu$ with $\ell(f)=\int_{X}f(x)\mu(\dd x)$ for every $f\in\mathcal{B}^\psi(X)$.
\end{corollary}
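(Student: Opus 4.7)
The plan is to reduce to Theorem~\ref{theorem:rieszrepresentation} by first establishing that any positive linear $\ell$ is automatically continuous, and then promoting the resulting signed Radon measure to a positive one.

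First I would verify that $\mathcal{B}^\psi(X)$ is a Banach lattice under pointwise order: if $f_n \in \mathrm{C}_b(X)$ converges to $f$ in $\lVert\cdot\rVert_\psi$, then $(\lvert f_n\rvert)_n \subset \mathrm{C}_b(X)$ is Cauchy in $\lVert\cdot\rVert_\psi$, because $\bigl\lvert\lvert f_n\rvert - \lvert f_m\rvert\bigr\rvert \le \lvert f_n - f_m\rvert$ pointwise, and its $\lVert\cdot\rVert_\psi$-limit coincides with $\lvert f\rvert$, since $\lVert\cdot\rVert_\psi$-convergence forces pointwise convergence through $\lvert f_n(x) - f(x)\rvert \le \psi(x)\lVert f_n - f\rVert_\psi$. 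I would then run the standard series argument for automatic boundedness of positive functionals on a Banach lattice: if $\ell$ were unbounded, one could pick $f_n \in \mathcal{B}^\psi(X)$ with $\lVert f_n\rVert_\psi \le 1$ and $\lvert \ell(f_n)\rvert \ge 4^n$, and set $g_n := \lvert f_n\rvert/2^n \ge 0$, so that $\lVert g_n\rVert_\psi \le 2^{-n}$ and, since $g_n \ge \pm f_n$, $\ell(g_n) \ge 2^n$. Absolute convergence of $g := \sum_n g_n$ in $\mathcal{B}^\psi(X)$ together with non-negativity of all summands yields $g \ge g_n$ pointwise, so positivity of $\ell$ gives $\ell(g) \ge \ell(g_n) \ge 2^n$ for every $n$, a contradiction.

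Having continuity, Theorem~\ref{theorem:rieszrepresentation} produces a signed Radon measure $\mu$ representing $\ell$. To show $\mu \ge 0$, I would argue by contradiction via the Jordan decomposition $\mu = \mu^+ - \mu^-$ into mutually singular finite positive Radon measures. Assuming $\mu^- \ne 0$, inner regularity of $\mu^-$ yields a compact $K$ with $\mu^-(K) > 0$ that is contained in a Borel set carrying $\mu^-$ and disjoint from a support of $\mu^+$; in particular $\mu^+(K) = 0$. Outer regularity of $\mu^+$ then produces, for any $\varepsilon > 0$, an open $U \supset K$ with $\mu^+(U) < \varepsilon$. Complete regularity of $X$ together with compactness of $K$ gives $\phi \in \mathrm{C}_b(X)$ with $0 \le \phi \le 1$, $\phi|_K = 1$ and $\phi|_{X\setminus U} = 0$, whence
\begin{equation*}
    \ell(\phi) = \int_X \phi\,\dd\mu^+ - \int_X \phi\,\dd\mu^- \le \mu^+(U) - \mu^-(K) < 0
\end{equation*}
for $\varepsilon$ small enough, contradicting $\ell(\phi) \ge 0$. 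Hence $\mu^- = 0$ and $\mu$ is positive.

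The main obstacle is the automatic continuity step: one has to recognise $\mathcal{B}^\psi(X)$ as a Banach lattice even though the natural dominating function $\psi$ need not itself lie in $\mathcal{B}^\psi(X)$, and then turn unboundedness of $\ell$ into a single element of $\mathcal{B}^\psi(X)$ dominating an unbounded family from above. Once continuity is in hand, the Jordan decomposition argument combined with Tychonoff separation of compact sets from closed sets is routine.
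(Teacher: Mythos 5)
Your proof is correct, and its decisive step --- automatic continuity of a positive functional by summing a normalised sequence of nonnegative functions into a single dominating element of $\mathcal{B}^{\psi}(X)$ --- is exactly the paper's argument (the paper takes $\lvert\ell(f_n)\rvert\ge n^3$ with weights $n^{-2}$ where you take $4^n$ and $2^{-n}$; note the small slip that $g_n\ge\pm f_n/2^n$, not $\pm f_n$, though the intended bound $\ell(g_n)\ge 2^n$ still follows). The only difference is that you additionally verify closure of $\mathcal{B}^{\psi}(X)$ under absolute values (which the paper uses tacitly when it writes $\ell(\lvert f\rvert)$) and supply an explicit Jordan-decomposition/Urysohn argument for positivity of the representing measure, a point the paper leaves to the underlying Bourbaki machinery.
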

\begin{proof}
	We only have to prove that $\ell$ is continuous.
	Assume otherwise. 
	Then, there exists a sequence $(f_n)_{n\in\mathbb{N}}$, $f_n\in\mathcal{B}^{\psi}(X)$, such that $\lVert f_n\rVert_{\psi}=1$, but $\lvert \ell(f_n)\rvert\ge n^3$. As $\lvert \ell(f)\rvert\le \ell(\lvert f\rvert)$ for any $f\in\mathcal{B}^{\psi}(X)$, we can assume without loss of generality that $f_n\ge 0$ for all $n\in\mathbb{N}$. 
	As $\sum_{n\in\mathbb{N}}n^{-2}\lVert f_n\rVert_{\psi}<\infty$, the limit $f:=\sum_{n\in\mathbb{N}}n^{-2}f_n\in\mathcal{B}^{\psi}(X)$ is well-defined and $f\ge 0$. 
	Thus, we obtain a contradiction due to
	\begin{equation}
		n \le \ell(n^{-2}f_n) \le \ell(f). 
	\end{equation}
\qed\end{proof}

The following results emphasise the analogy in structure of $\mathcal{B}^\psi(X)$ and the space of functions vanishing at infinity on a locally compact space.
\begin{theorem}
	\label{theorem:Bdecay}
	Let $f\colon X\to\mathbb{R}$.
	Then, $f\in\mathcal{B}^{\psi}(X)$ if and only if $f|_{K_R}\in \mathrm{C}(K_R)$ for all $R>0$ and
	\begin{equation}
		\label{eq:Bdecay}
		\lim_{R\to\infty}\sup_{x\in X\setminus K_R}\psi(x)^{-1}\lvert f(x)\rvert=0.
	\end{equation}
	In particular, $f\in\mathcal{B}^\psi(X)$ for every $f\in\mathrm{C}(X)$ where \eqref{eq:Bdecay} holds.
\end{theorem}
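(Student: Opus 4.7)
The plan is to prove the two implications separately. The forward direction is routine and follows from standard uniform-approximation arguments, while for the converse the main obstacle will be constructing, for each $\varepsilon>0$, a bounded continuous approximant of $f$ by extending $f|_{K_R}$ off $K_R$; this extension step genuinely requires the Tychonoff structure of $X$.

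For the forward direction, I would fix an approximating sequence $f_n\in\mathrm{C}_b(X)$ with $\lVert f-f_n\rVert_\psi\to 0$. Since $\psi\le R$ on $K_R$, the inequality $\sup_{K_R}\lvert f-f_n\rvert\le R\lVert f-f_n\rVert_\psi$ forces uniform convergence of $f_n|_{K_R}$ to $f|_{K_R}$ on the compact set $K_R$, so $f|_{K_R}\in\mathrm{C}(K_R)$. For the decay in \eqref{eq:Bdecay}, I would split $\psi(x)^{-1}\lvert f(x)\rvert\le\lVert f-f_n\rVert_\psi+\lVert f_n\rVert_{\mathrm{C}_b(X)}/R$ on $X\setminus K_R$, fix $n$ so that the first term is below $\varepsilon/2$, and then pick $R$ so that the second term is as well.

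For the converse, I would write $M_R:=\sup_{K_R}\lvert f\rvert$ (finite because $f|_{K_R}$ is continuous on a compact set) and $c_R:=\sup_{X\setminus K_R}\psi(x)^{-1}\lvert f(x)\rvert$. The first preparatory step will be the estimate $M_R/R\to 0$ as $R\to\infty$: for $R_0<R$, splitting $K_R=K_{R_0}\cup(K_R\setminus K_{R_0})$ and using $\psi>R_0$ on the second piece gives $M_R\le\max(M_{R_0},c_{R_0}R)$, whence $\limsup_{R\to\infty}M_R/R\le c_{R_0}$, arbitrarily small by \eqref{eq:Bdecay}. Given $\varepsilon>0$, I would then fix $R$ so large that both $c_R<\varepsilon/2$ and $M_R/R<\varepsilon/2$.

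The hard part is next: producing $g\in\mathrm{C}_b(X)$ with $g|_{K_R}=f|_{K_R}$ and $\lvert g\rvert\le M_R$. Here the Tychonoff hypothesis is essential, and I would invoke the fact that every compact subspace of a Tychonoff space is $\mathrm{C}^*$-embedded: via the Stone--\v{C}ech compactification (or equivalently via the Tychonoff embedding $X\hookrightarrow[0,1]^I$), $K_R$ sits as a closed subset of a compact Hausdorff, hence normal, ambient space, so Tietze's theorem produces a continuous extension, and truncation at $\pm M_R$ followed by restriction to $X$ yields $g$. Then $f-g\equiv 0$ on $K_R$, while on $X\setminus K_R$ we have $\psi(x)^{-1}\lvert f(x)-g(x)\rvert\le c_R+M_R/\psi(x)\le c_R+M_R/R<\varepsilon$, so $f\in\mathcal{B}^\psi(X)$. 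The final clause is immediate: $f\in\mathrm{C}(X)$ trivially restricts to $\mathrm{C}(K_R)$ on each $K_R$, so the main equivalence applies.
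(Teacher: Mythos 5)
Your proof is correct, and the crucial step is the same as in the paper: both arguments hinge on extending $f|_{K_R}$ to a bounded continuous function on all of $X$ by embedding the Tychonoff space $X$ into a compact Hausdorff space and applying Tietze to the (closed) compact set $K_R$ there; the forward direction is identical. The organization of the converse differs slightly. The paper first truncates, setting $f_n:=(f\vee(-n))\wedge n$, shows each $f_n\in\mathcal{B}^\psi(X)$ by extending $f_n|_{K_R}$ with sup-bound $n$ and letting $R\to\infty$, and then shows $f_n\to f$ in $\lVert\cdot\rVert_\psi$ using \eqref{eq:Bdecay}, invoking closedness of $\mathcal{B}^\psi(X)$ twice. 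You instead produce a single $\mathrm{C}_b$-approximant of $f$ within $\varepsilon$ in one shot, at the cost of the auxiliary estimate $M_R/R\to 0$ (whose proof via $M_R\le\max(M_{R_0},c_{R_0}R)$ is correct); this makes the argument a one-step density statement and avoids the intermediate truncations, while the paper's version trades that lemma for the simpler observation that the truncation is globally bounded by $n$. Both are complete; yours is marginally more direct, the paper's marginally more elementary in its estimates.
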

\begin{proof}
	Assume that $f\in\mathcal{B}^\psi(X)$.
	For $g\in\mathrm{C}_b(X)$ with $\lVert f-g\rVert_{\psi}<\frac{\varepsilon}{2}$,
	\begin{alignat}{2}{}
		\psi(x)^{-1}\lvert f(x)\rvert \le \frac{\varepsilon}{2} + \psi(x)^{-1}\rvert g(x)\rvert \quad\text{for $x\in X$},
	\end{alignat}
	the last term being bounded by $\frac{\varepsilon}{2}$ for $x\in X\setminus K_{R}$ with $R:=2\varepsilon^{-1}\lVert g\rVert_{\mathrm{C}_b(X)}$.
	Thus,
	\begin{equation}
		\sup_{x\in X\setminus K_R}\psi(x)^{-1}\lvert f(x)\rvert
		\le
		\varepsilon,
	\end{equation}
	which proves \eqref{eq:Bdecay}.

	Next, we prove that for any $R>0$, $f|_{K_R}$ is continuous.
	With $g$ as above,
	\begin{equation}
		\sup_{x\in K_R}\lvert f(x)-g(x)\rvert
		\le
		R\sup_{x\in K_R}\psi(x)^{-1}\lvert f(x)-g(x)\rvert
		\le
		\frac{\varepsilon}{2} R,
	\end{equation}
	which means that $f|_{K_R}$ is a uniform limit of continuous functions and hence continuous.

	For the other direction, set $f_n:=\min(\max(f(\cdot),-n),n) = (f_n \vee n) \wedge n $.
	We prove first that $f_n\in\mathcal{B}^\psi(X)$.
	As $f|_{K_R}\in\mathrm{C}(K_R)$, we see that $f_n|_{K_R}\in\mathrm{C}(K_R)$.
	$K_R$ is compact in a completely regular space.
	We can embed $X$ into a compact space $Y$ by \cite[Chapitre~IX~\S~1~Proposition~3, Proposition~4]{Bourbaki1974}.
	Applying the Tietze extension theorem \cite[Chapitre~IX~\S~4~Th\'eor\`eme~2]{Bourbaki1974} to the set $K_R$, which is also compact in $Y$ and therefore closed, we obtain existence of $g_{n,R}\in\mathrm{C}_b(X)$ with $g_{n,R}|_{K_R}=f_{n}|_{K_R}$ and $\sup_{x\in X}\lvert g_{n,R}(x)\rvert\le n$ for all $x\in X$.
	\eqref{eq:Bdecay} yields
	\begin{equation}
		\lVert f_n - g_{n,R}\rVert_{\psi}
		\le \sup_{x\in X\setminus K_R}\psi(x)^{-1}\lvert f_n(x)-g_{n,R}(x)\rvert
		\le 2nR^{-1},
	\end{equation}
	hence $f_n\in\mathcal{B}^{\psi}(X)$.
	Next, choose $R>0$ such that $\sup_{x\in X\setminus K_R}\psi(x)^{-1}\lvert f(x)\rvert<\varepsilon$.
	With $n>\sup_{x\in K_R}\lvert f(x)\rvert$, $f(x)=f_n(x)$ on $K_R$.
	Therefore,
	\begin{equation}
		\lVert f-f_n\rVert_{\psi}
		\le \sup_{x\in X\setminus K_R}\psi(x)^{-1}\lvert f(x)-f_n(x)\rvert
		\le 2\varepsilon,
	\end{equation}
	which shows that $f\in\mathcal{B}^\psi(X)$.
\qed\end{proof}
\begin{theorem}
	\label{theorem:attainmax}
	For every $f\in\mathcal{B}^\psi(X)$ with $\sup_{x\in X}f(x)>0$, there exists $z\in X$ such that
	\begin{equation}
		\psi(x)^{-1}f(x) \le \psi(z)^{-1}f(z) \quad\text{for all $x\in X$}.
	\end{equation}
\end{theorem}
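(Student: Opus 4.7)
The plan is to show that the supremum $M := \sup_{x\in X}\psi(x)^{-1}f(x)$ is attained. Note first that $M>0$, because the hypothesis $\sup_{x\in X}f(x)>0$ furnishes an $x_0$ with $f(x_0)>0$, whence $\psi(x_0)^{-1}f(x_0)>0$; and $M<\infty$, since $M\le \lVert f\rVert_\psi$.

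Next I would localise to a compact level set using Theorem \ref{theorem:Bdecay}. The decay statement \eqref{eq:Bdecay} lets me pick $R>0$ large enough so that
\begin{equation}
	\sup_{x\in X\setminus K_R}\psi(x)^{-1}\lvert f(x)\rvert < M/2.
\end{equation}
Since the supremum of $\psi^{-1}f$ over $X\setminus K_R$ is bounded by this quantity, splitting $X=K_R\cup(X\setminus K_R)$ shows that $M=\sup_{x\in K_R}\psi(x)^{-1}f(x)$, and the problem becomes one of attainment on the compact set $K_R$.

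The last step is an upper semicontinuity argument on $K_R$. By Theorem \ref{theorem:Bdecay}, $f|_{K_R}$ is continuous, hence so is $f^+:=\max(f,0)|_{K_R}$; moreover $\psi^{-1}f = \psi^{-1}f^+$ wherever this quotient is non-negative, so $M=\sup_{x\in K_R}\psi(x)^{-1}f^+(x)$. Now $\psi$ is lower semicontinuous and, being positive and lower semicontinuous on compact $K_R$, attains a strictly positive minimum there; therefore $\psi^{-1}$ is upper semicontinuous and bounded on $K_R$. A short check shows that the product of a bounded non-negative upper semicontinuous function and a non-negative continuous function is itself upper semicontinuous (for $x_n\to x$ with $h(x)>0$ continuous, $\limsup h(x_n)k(x_n)= h(x)\limsup k(x_n)\le h(x)k(x)$, while for $h(x)=0$ boundedness of $k$ gives $h(x_n)k(x_n)\to 0$). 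Thus $\psi^{-1}f^+$ is upper semicontinuous on the compact $K_R$ and attains its maximum at some $z\in K_R$. Since this maximum equals $M>0$, we must have $f^+(z)=f(z)>0$, and $z$ is the desired point.

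The only subtlety I anticipate is bookkeeping around the sign of $f$: the quantity to be maximised is $\psi^{-1}f$, but upper semicontinuity is cleanest for non-negative products, so the reduction to $f^+$ and the verification that $\psi^{-1}f^+$ remains upper semicontinuous (given only lower semicontinuity of $\psi$) is the one place one must be precise; everything else is a routine combination of Theorem \ref{theorem:Bdecay} with compactness.
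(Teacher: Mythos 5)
Your proof is correct and follows essentially the same route as the paper's: localise to a compact level set $K_R$ via Theorem~\ref{theorem:Bdecay}, replace $f$ by $\max(f,0)$, and use upper semicontinuity of $\psi^{-1}\max(f,0)$ on the compact set $K_R$ to attain the supremum. The only cosmetic point is that $K_R$ need not be metrisable, so your verification that the product of the bounded upper semicontinuous $\psi^{-1}$ with the continuous $f^+$ is upper semicontinuous should be phrased with nets (or superlevel sets) rather than sequences; the computation is unchanged.
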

\begin{proof}
	Let $\alpha:=\sup_{x\in X}\psi(x)^{-1}f(x)>0$.
	Then, by Theorem~\ref{theorem:Bdecay}, there exists some $R>0$ such that $\sup_{\psi(x)>R}\psi(x)^{-1}f(x)\le\frac{\alpha}{2}$, that is, 
	\begin{equation}
		\alpha=\sup_{x\in K_R}\psi(x)^{-1}f(x).
	\end{equation}
	Define $h:=\psi^{-1}\max(f,0)$.
	Then, $\alpha=\sup_{x\in K_R}h(x)$.
	Furthermore, $\psi^{-1}$ is upper semicontinuous, $\max(f,0)$ is continuous on $K_R$ by Theorem~\ref{theorem:Bdecay} and both are nonnegative.
	Thus, $h$ is upper semicontinuous (see \cite[Chap.~IV~\S~6~Proposition~2]{Bourbaki1971}) and by \cite[Chapitre~IV~\S~6~Th\'eor\`eme~3]{Bourbaki1971} attains its maximum at some point $z\in K_R$, i.e., $\alpha=\psi(z)^{-1}f(z)$
\qed\end{proof}

\section{A Generalised Feller Condition}
The generalised Feller property will allow us to speak about strongly continuous semigroups on spaces of functions with growth controlled by $ \psi $, in particular functions which are unbounded. From the point of view of applications this means that we consider a weighted supremum norm instead of the supremum norm.

Let $(P_t)_{t\ge 0}$ be a family of bounded linear operators $P_t\colon\mathcal{B}^{\psi}(X)\to\mathcal{B}^{\psi}(X)$ with the following properties:
\begin{enumerate}
		\renewcommand{\theenumi}{{\bf F\arabic{enumi}}}
	\item
		\label{enu:defgenfeller-0id}
		$P_0=I$, the identity on $\mathcal{B}^{\psi}(X)$,
	\item
		$P_{t+s}=P_tP_s$ for all $t$, $s\ge 0$,
	\item
		\label{enu:defgenfeller-pwconv}
		for all $f\in\mathcal{B}^{\psi}(X)$ and $x\in X$, $\lim_{t\to 0+}P_t f(x)=f(x)$,
	\item
		\label{enu:defgenfeller-bound}
		there exist a constant $C\in\mathbb{R}$ and $\varepsilon>0$ such that for all $t\in [0,\varepsilon]$, $\lVert P_t\rVert_{L(\mathcal{B}^{\psi}(X))}\le C$,
	\item
		\label{enu:defgenfeller-positivity}
		$P_t$ is positive for all $t\ge 0$, that is, for $f\in\mathcal{B}^{\psi}(X)$, $f\ge 0$, we have $P_t f\ge 0$.
\end{enumerate}
Alluding to \cite[Chapter~17]{Kallenberg1997}, such a family of operators will be called a \emph{generalised Feller semigroup}.

\begin{remark}
	As Chris Rogers remarked, state space transformation of the type $ x \mapsto \phi(x):= \frac{x}{\sqrt{1+{\| x \|}^2}} $ transform unbounded state spaces into bounded ones. The weight function $ \psi $ is then used to rescale real valued functions $ f : X \to \mathbb{R} $ via $ \tilde{f}:=f/ \psi $ in order to investigate $ \tilde{f} \circ \phi^{-1} $ on $ \phi(X) $. This function will often have a continuous extension to the closure of $ \phi(X) $, which -- in the appropriate topology -- will be often compact. This relates the generalized Feller property to the classical Feller property.
	Note that in our situation, however, $\psi$ is typically not continuous for infinite dimensional $X$.
\end{remark}

We shall now prove that semigroups satisfying \ref{enu:defgenfeller-0id} to \ref{enu:defgenfeller-bound} are actually strongly continuous, a direct consequence of Lebesgue's dominated convergence theorem with respect to measures existing due to Riesz representation.
\begin{theorem}
	\label{theorem:Ttstrongcont}
	Let $(P_t)_{t\ge 0}$ satisfy \ref{enu:defgenfeller-0id} to \ref{enu:defgenfeller-bound}. 
	Then, $(P_t)_{t\ge 0}$ is strongly continuous on $\mathcal{B}^{\psi}(X)$, that is,
	\begin{equation}
		\lim_{t\to 0+}\lVert P_t f-f\rVert_{\psi}=0
		\quad\text{for all $f\in\mathcal{B}^{\psi}(X)$}.
	\end{equation}
\end{theorem}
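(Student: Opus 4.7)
The plan is to establish weak continuity at $t=0$ by applying Lebesgue's dominated convergence theorem to the dual pairing furnished by the Riesz representation (Theorem~\ref{theorem:rieszrepresentation}), and then to upgrade to strong continuity by the classical time-averaging device of semigroup theory, combined with the characterisation of $\mathcal{B}^\psi(X)$ in Theorem~\ref{theorem:Bdecay}.

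\emph{Weak continuity at $0$.} Fix $f\in\mathcal{B}^\psi(X)$ and $\ell\in(\mathcal{B}^\psi(X))^*$. Theorem~\ref{theorem:rieszrepresentation} yields a signed Radon measure $\mu$ on $X$ with $\int_X\psi\,d\lvert\mu\rvert<\infty$ and $\ell(g)=\int_X g\,d\mu$ for every $g\in\mathcal{B}^\psi(X)$. From \ref{enu:defgenfeller-pwconv} one has $P_tf(x)\to f(x)$ as $t\to 0+$ for each $x$, while \ref{enu:defgenfeller-bound} gives the uniform dominating bound $\lvert P_tf(x)\rvert\le\lVert P_tf\rVert_\psi\psi(x)\le C\lVert f\rVert_\psi\psi(x)$ for $t\in[0,\varepsilon]$. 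Since $\psi$ is $\lvert\mu\rvert$-integrable, dominated convergence gives $\ell(P_tf)\to\ell(f)$; that is, $P_tf\to f$ weakly as $t\to 0+$.

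\emph{Strong continuity via averaging.} For $h\in(0,\varepsilon/2)$ define pointwise $y_h(x):=\int_0^h P_sf(x)\,ds$. The integrand is right-continuous in $s$ (apply the weak continuity just established to $g:=P_sf$ with $\ell=\delta_x$ and use the semigroup law) and uniformly bounded by $C\lVert f\rVert_\psi\psi(x)$, so $y_h$ is well-defined with $\lvert y_h(x)\rvert\le Ch\lVert f\rVert_\psi\psi(x)$. Using Theorem~\ref{theorem:Bdecay} one verifies $y_h\in\mathcal{B}^\psi(X)$: the decay $\sup_{x\notin K_R}\psi(x)^{-1}\lvert y_h(x)\rvert\to0$ is inherited from each $P_sf$ via dominated convergence, and continuity of $y_h|_{K_R}$ follows similarly from the continuity of $P_sf|_{K_R}$. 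A substitution based on the semigroup property (together with Fubini's theorem applied to the Riesz measures $p_t(x,\cdot)$ of $f\mapsto P_tf(x)$) gives
\begin{equation}
	P_ty_h-y_h=\int_h^{h+t}P_sf\,ds-\int_0^t P_sf\,ds,
\end{equation}
whence $\lVert P_ty_h-y_h\rVert_\psi\le 2Ct\lVert f\rVert_\psi\to 0$ as $t\to 0+$. Since $h^{-1}y_h\to f$ weakly by the first stage, the linear span of the $y_h$ (over $h\in(0,\varepsilon/2)$ and $f\in\mathcal{B}^\psi(X)$) is weakly dense, hence strongly dense by Mazur's theorem. A standard $\varepsilon/3$-argument using the bound \ref{enu:defgenfeller-bound} then propagates strong continuity from this dense subspace to all of $\mathcal{B}^\psi(X)$.

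\emph{Main obstacle.} The technically most delicate point is verifying $y_h\in\mathcal{B}^\psi(X)$ rather than merely $y_h\in\mathrm{B}^\psi(X)$: one must justify interchanging the $s$-integral with the limits encoding continuity on each compact level set $K_R$ (and with the action of $P_t$), which is done using the pointwise continuity of each $P_sf$ on $K_R$ from Theorem~\ref{theorem:Bdecay} together with the dominating bound $C\lVert f\rVert_\psi\psi(x)$ supplied by \ref{enu:defgenfeller-bound}.
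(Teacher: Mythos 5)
Your first stage coincides with the paper's entire proof of this theorem: the paper establishes $\lim_{t\to 0+}\ell(P_t f)=\ell(f)$ for every continuous functional $\ell$ exactly as you do, via Theorem~\ref{theorem:rieszrepresentation}, the domination $\lvert P_t f(x)\rvert\le C\lVert f\rVert_{\psi}\psi(x)$ from \ref{enu:defgenfeller-bound}, and dominated convergence, the $\lvert\mu\rvert$-integrability of $\psi$ from \eqref{eq:rieszrepresentation-psiintbound} being the essential ingredient. At that point the paper simply invokes \cite[Theorem~I.5.8]{EngelNagel2000} (a locally bounded semigroup that is weakly continuous at $0$ is strongly continuous); your second stage is a re-derivation of that lemma by the classical averaging/Mazur argument.

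That re-derivation has one genuine gap, and it sits exactly where you place the ``main obstacle'': the membership $y_h\in\mathcal{B}^{\psi}(X)$. By Theorem~\ref{theorem:Bdecay} you must show $y_h|_{K_R}\in\mathrm{C}(K_R)$ for every $R$, and you propose to deduce this from the pointwise continuity of each $P_s f|_{K_R}$ plus the uniform bound, ``by dominated convergence''. But $X$ is only completely regular Hausdorff, so $K_R$ is a compact Hausdorff space that need not be metrisable: dominated convergence along sequences yields only sequential continuity of $y_h|_{K_R}$, and dominated convergence fails for nets, so continuity does not follow in the stated generality. (The decay condition \eqref{eq:Bdecay} for $y_h$ is unproblematic, since the supremum over $X\setminus K_R$ is monotone in $R$.) This membership is load-bearing: $P_t y_h$ is only defined, and the identity $P_t y_h-y_h=\int_h^{h+t}P_s f\,\dd s-\int_0^t P_s f\,\dd s$ only makes sense, once $y_h\in\mathcal{B}^{\psi}(X)$. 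The standard repair --- and what the cited lemma actually does --- is to define $\int_0^h P_s f\,\dd s$ as a weak (Pettis) integral in the Banach space $\mathcal{B}^{\psi}(X)$ itself, so membership is automatic; the price is showing that the Dunford integral, a priori in the bidual, lies in $\mathcal{B}^{\psi}(X)$ (via weak compactness of the orbit and Krein--Smulian, or an equivalent device). Your pointwise definition trades that difficulty for the $K_R$-continuity problem without resolving either. The remaining steps (Fubini with the Riesz measures, weak density of the $y_h$, Mazur, the $\varepsilon/3$ extension using \ref{enu:defgenfeller-bound}) are correct.
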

\begin{proof}
	By \cite[Theorem I.5.8]{EngelNagel2000}, we only have to prove that $t\mapsto \ell(P_t f)$ is right continuous at zero for every $f\in\mathcal{B}^{\psi}(X)$ and every continuous linear functional $\ell\colon\mathcal{B}^{\psi}(X)\to\mathbb{R}$.
	Due to Theorem~\ref{theorem:rieszrepresentation}, we know that there exists a signed measure $\nu$ on $X$ such that $\ell(g)=\int_{X}g\dd\nu$ for every $g\in\mathcal{B}^{\psi}(X)$.
	By \ref{enu:defgenfeller-bound}, we see that for every $t\in[0,\varepsilon]$,
	\begin{equation}
		\lvert P_t f(x)\rvert
		\le C \psi(x).
	\end{equation}
	Due to \eqref{eq:rieszrepresentation-psiintbound}, the dominated convergence theorem yields
	\begin{alignat}{2}{}
		\lim_{t\to 0+}\int_{X}P_t f(x)\nu(\dd x) = \int_{X}f(x)\nu(\dd x),
	\end{alignat}
	and the claim follows.
	Here, the integrability of $ \psi $ with respect to the total variation measure $ \lvert\nu\rvert $ enters in an essential way.
\qed\end{proof}

We can establish a positive maximum principle in case that the semigroup $ P_t $ grows like $ \exp(\alpha t) $ with respect to the operator norm on $ \mathcal{B}^{\psi}(X) $.
\begin{theorem}
	\label{theorem:Ttposmaxprinciple}
	Let $A$ be an operator on $\mathcal{B}^{\psi}(X)$ with domain $D$, and $\omega\in\mathbb{R}$.
	$A$ is closable with its closure $\overline{A}$ generating a generalised Feller semigroup $(P_t)_{t\ge 0}$ with $\lVert P_t\rVert_{L(\mathcal{B}^{\psi}(X))}\le\exp(\omega t)$ for all $t\ge 0$ if and only if 
	\begin{enumerate}
		\item
			$D$ is dense,
		\item
			$A-\lambda_0$ has dense image for some $\lambda_0>\omega$, and
		\item
			$A$ satisfies the \emph{generalised positive maximum principle}, that is, for $f\in D$ with $(\psi^{-1}f)\vee 0\le \psi(z)^{-1}f(z)$ for some $z\in X$, $Af(z)\le \omega f(z)$.
	\end{enumerate}
\end{theorem}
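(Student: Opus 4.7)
The forward implication will be a routine unpacking of Hille--Yosida together with a short positivity computation. Assuming $\overline{A}$ generates a generalised Feller semigroup $(P_t)_{t\ge 0}$ with $\lVert P_t\rVert\le\exp(\omega t)$, density of $D$ is just the density of a core, and denseness of $\ran(\lambda_0-A)$ for $\lambda_0>\omega$ follows from surjectivity of $\lambda_0-\overline{A}$ together with $\lambda_0 f_n-Af_n\to\lambda_0 f-\overline{A}f$ for any $f_n\in D$ with $f_n\to f$ and $Af_n\to\overline{A}f$. To obtain the generalised positive maximum principle, given $f\in D$ with $(\psi^{-1}f)\vee 0\le\psi(z)^{-1}f(z)$, I split $f=f^+-f^-$, note that $\lVert f^+\rVert_\psi=\psi(z)^{-1}f(z)$, and use positivity of $P_t$ to infer $P_tf(z)\le P_tf^+(z)\le\psi(z)\exp(\omega t)\lVert f^+\rVert_\psi=\exp(\omega t)f(z)$; differentiating at $t=0+$ (the limit $(P_tf-f)/t\to Af$ in $\lVert\cdot\rVert_\psi$ yielding pointwise convergence at $z$) gives $Af(z)\le\omega f(z)$.

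For the converse the heart of the argument is to convert the generalised positive maximum principle into the $\omega$-dissipativity estimate
\begin{equation}
\lVert(\lambda-A)f\rVert_\psi\ge(\lambda-\omega)\lVert f\rVert_\psi\quad\text{for all $f\in D$ and $\lambda>\omega$.}
\end{equation}
To prove this I apply Theorem~\ref{theorem:attainmax} separately to $f$ and to $-f$: whichever of $\sup_X(\pm f)$ is positive produces a point $z^\pm\in X$ at which $(\psi^{-1}(\pm f))\vee 0$ attains its supremum $\lVert(\pm f)^+\rVert_\psi$, so the generalised positive maximum principle yields $\pm Af(z^\pm)\le\omega(\pm f(z^\pm))$ and hence $\psi(z^\pm)^{-1}(\lambda-A)(\pm f)(z^\pm)\ge(\lambda-\omega)\lVert(\pm f)^+\rVert_\psi$; taking the larger of the two sides gives the estimate. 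Combined with density of $D$ and dense range of $\lambda_0-A$, the Lumer--Phillips theorem applied to $A-\omega$ (as in \cite{EngelNagel2000}) implies that $A$ is closable and $\overline{A}$ generates a strongly continuous semigroup $(P_t)_{t\ge 0}$ satisfying $\lVert P_t\rVert_{L(\mathcal{B}^\psi(X))}\le\exp(\omega t)$; properties \ref{enu:defgenfeller-0id} through \ref{enu:defgenfeller-bound} are then automatic.

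What remains, and what I expect to be the main obstacle, is positivity \ref{enu:defgenfeller-positivity}, because the generalised positive maximum principle is imposed only on $D$ and need not transfer directly to $\dom(\overline{A})$. My plan is to prove positivity of the resolvent $R_\lambda:=(\lambda-\overline{A})^{-1}$ for $\lambda>\omega$ by a limiting version of the argument used for dissipativity. Given $g\ge 0$ in $\mathcal{B}^\psi(X)$, set $f:=R_\lambda g\in\dom(\overline{A})$ and suppose, for contradiction, that $\sup_X(-f)>0$. Choose $f_n\in D$ with $f_n\to f$ and $Af_n\to\overline{A}f$ in $\lVert\cdot\rVert_\psi$; for $n$ sufficiently large Theorem~\ref{theorem:attainmax} supplies $z_n\in X$ with $\psi(z_n)^{-1}(-f_n(z_n))=\lVert(-f_n)^+\rVert_\psi$, and the generalised positive maximum principle applied to $-f_n\in D$ yields $\psi(z_n)^{-1}(\lambda-A)f_n(z_n)\le-(\lambda-\omega)\lVert(-f_n)^+\rVert_\psi$, whose limit $-(\lambda-\omega)\lVert(-f)^+\rVert_\psi$ is strictly negative. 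On the other hand, the norm convergence $(\lambda-A)f_n\to g$ forces $\psi(z_n)^{-1}(\lambda-A)f_n(z_n)-\psi(z_n)^{-1}g(z_n)\to 0$ while $\psi(z_n)^{-1}g(z_n)\ge 0$ by $g\ge 0$, producing a contradiction. Hence $R_\lambda\ge 0$, and positivity of $P_t$ then follows from the exponential/Yosida formula $P_t=\lim_{\mu\to\infty}\exp(t\mu(\mu R_\mu-I))$, which presents $P_t$ as a norm limit of nonnegative combinations of positive operators.
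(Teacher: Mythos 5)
Your proposal is correct and follows essentially the same route as the paper: the forward direction via positivity and the $\exp(\omega t)$ bound evaluated at the maximising point, the converse via Theorem~\ref{theorem:attainmax} plus the generalised positive maximum principle to get $\omega$-dissipativity and then Lumer--Phillips, and resolvent positivity by approximating $f=R_\lambda g$ from $D$ and passing to the limit at the minimising points $z_n$. The only cosmetic differences are that the paper spells out the closability argument explicitly (via the estimate applied to $g_m+\lambda f_n$) where you invoke the closability clause of Lumer--Phillips, and it phrases the positivity step contrapositively with $\inf_X\psi^{-1}f$ instead of $\sup_X(-f)$.
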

Note that $(\psi^{-1}f)\vee 0=\psi^{-1}(f\vee 0)$ as $\psi>0$.
Therefore, $(\psi^{-1}f)\vee 0\le\psi^{-1}(z)f(z)$ is equivalent to
\begin{equation}
	\lVert f\vee 0\rVert_{\psi} 
	\le \psi^{-1}(z)f(z).
\end{equation}
\begin{proof}
	Assume first that $(P_t)_{t\ge 0}$ is a generalised Feller semigroup satisfying 
	\begin{equation}
		\lVert P_t\rVert_{L(\mathcal{B}^{\psi}(X))}\le\exp(\omega t),
	\end{equation}
	and $A$ with domain $D$ is its generator.
	For $f\in D$ with $\lVert f\vee 0\rVert_{\psi}\le\psi^{-1}(z)f(z)$,
	\begin{alignat}{2}{}
		P_t f(z) 
		&\le P_t (f\vee 0)(z) \le \psi(z)\lVert P_t(f\vee 0)\rVert_{\psi} \le \psi(z)\exp(\omega t)\lVert f\vee 0\rVert_{\psi} \notag \\
		&\le\exp(\omega t)f(z),
	\end{alignat}
	and due to the continuity of point evaluation, we obtain the inequality $Af(z)\le \omega f(z)$ in the limit $t\to 0+$.
	Thus, $A$ satisfies the generalised positive maximum principle.
	The density of $D$ and of $(A-\lambda_0)D$ follow from the Hille-Yosida theorem \cite[Theorem II.3.8, p.~77]{EngelNagel2000}.

	For the other direction, let $f\in D$ be arbitrary, and define $g:=(\sgn f(z))f$, where $z$ is chosen such that $\psi(z)^{-1}\lvert f(z)\rvert=\lVert f\rVert_{\psi}$ (this is possible due to Theorem~\ref{theorem:attainmax}).
	Clearly, $g\in D$ and $\psi(x)^{-1}g(x)\le\psi(z)^{-1}g(z)$, so the generalised positive maximum principle yields $Ag(z)\le \omega g(z)$.
	Thus, for $\lambda>0$,
	\begin{alignat}{2}
		\lVert (\lambda-(A-\omega))f\rVert_{\psi}
		&\ge
		\psi(z)^{-1}\left( \lambda g(z)-(A-\omega)g(z) \right)
		\ge
		\psi(z)^{-1}\lambda g(z) \notag \\
		&=
		\lambda\lVert f\rVert_{\psi}.
	\end{alignat}
	From this, closability of $A$ follows:
	if $(f_n)_{n\in\mathbb{N}}$ in $D$ are given such that both $\lim_{n\to\infty}\lVert f_n\rVert_{\psi}=0$ and $\lim_{n\to\infty}\lVert Af_n-g\rVert_{\psi}=0$, there exist $(g_m)_{m\in\mathbb{N}}$ in $D$ with $\lim_{m\to\infty}\lVert g_m-g\rVert_{\psi}=0$.
	Thus, for any $\lambda>0$ and $m$, $n\in\mathbb{N}$,
	\begin{equation}
		\lVert (\lambda-(A-\omega))(g_m+\lambda f_n)\rVert_{\psi}
		\ge \lambda \lVert g_m+\lambda f_n\rVert.
	\end{equation}
	Taking the limit $n\to\infty$, dividing by $\lambda$ and taking the limit $\lambda\to\infty$, we obtain $\lVert g_m-g\rVert_{\psi}\ge \lVert g_m\rVert_{\psi}$, and the limit $m\to\infty$ yields $g=0$.
	This proves the closability of $A$, and the closure $\overline{A}$ of $A$ with domain $\mathcal{D}$ satisfies
	\begin{equation}
		\lVert (\lambda-(\overline{A}-\omega))f\rVert_{\psi}
		\ge
		\lambda\lVert f\rVert_{\psi}
		\quad\text{for all $\lambda>0$ and $f\in\mathcal{D}$}.
	\end{equation}
	Thus, $\overline{A}-\omega$ is dissipative.
	The Lumer-Phillips theorem \cite[Theorem~II.3.15]{EngelNagel2000} yields that $\overline{A}$ generates a semigroup with $\lVert P_t\rVert_{L(\mathcal{B}^{\psi}(X))}\le\exp(\omega t)$ for all $t\ge 0$.

	We now prove that $R_\lambda=(\lambda-\overline{A})^{-1}$ is positive for every $\lambda>\omega$, which yields that $P_t$ is positive for every $t\ge 0$.
	To this end, we show that given $g\in\mathcal{B}^{\psi}(X)$ such that the solution $f\in\mathcal{D}$ of $(\lambda-\overline{A})f=g$ is not positive, $g$ cannot be positive, either.
	By assumption, $\alpha:=\inf_{x\in X}\psi(x)^{-1}f(x)<0$.
	Given a sequence of functions $(f_n)_{n\in\mathbb{N}}$ in $D$ converging to $f$ such that $Af_n$ converges to $\overline{A}f$, we see that we can assume without loss of generality that for every $n\in\mathbb{N}$, $\alpha_n:=\inf_{x\in X}\psi(x)^{-1}f_n(x)<0$, and we have that $\lim_{n\to\infty}\alpha_n=\alpha$.
	Theorem~\ref{theorem:attainmax} yields the existence of $z_n\in X$ with $\psi(z_n)^{-1}f_n(z_n)=\alpha_n$.
	By the positive maximum principle, $Af_n(z_n)\ge \omega f_n(z_n)$.
	Thus,
	\begin{alignat}{2}{}
		\inf_{x\in X}\psi(x)^{-1}g(x) 
		&= \lim_{n\to\infty} \inf_{x\in X}\psi(x)^{-1}(\lambda-A)f_n(x) \notag \\ 
		&\le \lim_{n\to\infty} \psi(z_n)^{-1}(\lambda-A)f_n(z_n) \notag \\ 
		&\le \lim_{n\to\infty} \psi(z_n)^{-1}(\lambda-\omega)f_n(z_n) \notag \\ 
		&= (\lambda-\omega) \lim_{n\to\infty} \inf_{x\in X} \psi(x)^{-1}f_n(x) \notag \\
		&= (\lambda-\omega) \inf_{x\in X} \psi(x)^{-1}f(x) = (\lambda-\omega)\alpha < 0,
	\end{alignat}
	that is, $g$ is not positive.
\qed\end{proof}

\section{Results On Dual Spaces}
In this section we consider a special class of state spaces that will be crucial for our applications to SPDEs: dual spaces of Banach spaces equipped with the weak-$*$ topology.
We remark that the weak topology on Hilbert spaces and sequential weak continuity was also used by Maslowski and Seidler \cite{MaslowskiSeidler1999} to prove ergodicity of stochastic partial differential equations.

Assume that $X=Y^*$ is the dual space of some Banach space $Y$ with its weak-$*$ topology or, more generally, a Hausdorff topological vector space.
Such a space is clearly endowed with a uniform structure, and thus completely regular Hausdorff \cite[Chapitre~IX~\S~1~Th\'eor\`eme~2]{Bourbaki1974}.
Consider a lower semicontinuous function $\psi\colon X\to(0,\infty)$.
Compactness of $K_R$ can often be proved using the Banach-Alaoglu theorem \cite[Theorem 3.15]{Rudin1973}.
In particular, if $Y$ is a Banach space and the sets $K_R$ are bounded in norm in $X$, compactness follows.

We denote by $X_{w*}$ the space $X$ endowed with the weak-$*$ topology, and assume that $(X_{w*},\psi)$ is a weighted space for a given weight function.
The sets 
$$
K_R=\left\{ x\in X\colon \psi(x)\le R \right\}
$$ 
are then weak-$*$ compact, and we shall always consider the weak-$*$ topology on $K_R$.
\begin{example}
	\label{ex:weightsBanachspace}
	Typical examples for weight functions are of the form $\psi(x)=\rho(\lVert x\rVert)$, where $\rho\colon[0,\infty)\to(0,\infty)$ is increasing and left-continuous.
	In this case, 
	\begin{equation}
		K_R=C_r(0):=\left\{ x\in X\colon \lVert x\rVert\le r \right\},
	\end{equation}
	where $r=\max\left\{ p\in\mathbb{R}\colon \rho(p)\le R \right\}$, and $C_r(0)$ is weak-$*$ compact by the Ba\-nach-Alaoglu theorem.
	Note that $\rho(r)\le R$ by left continuity.
	Below, we will consider choices such as $\rho(t)=(1+t^2)^{s/2}$, $s\ge 2$, $\rho(t)=\cosh(\beta t)$, $\beta>0$, and $\rho(t)=\exp(\eta t^2)$, $\eta>0$.
\end{example}
We want to give an approximation result for functions in $\mathcal{B}^\psi(X_{w*})$ by cylindrical functions.
Set
\begin{alignat}{2}
	\mathcal{A}_N := \bigl\{ g(\langle\cdot,y_1\rangle,\dots,\langle\cdot,y_N\rangle)\colon 
	&\text{$g\in\mathrm{C}_b^{\infty}(\mathbb{R}^N)$} \notag
	\\
	&\text{and $y_j\in Y$, $j=1,\dots,N$} \bigr\},
\end{alignat}
and denote by $\mathcal{A}:=\bigcup_{N\in\mathbb{N}}\mathcal{A}_N$ the bounded smooth continuous cylinder functions on $X$.
Clearly, $\mathcal{A}\subset\mathcal{B}^\psi(X_{w*})$.
\begin{theorem}
	\label{theorem:boundedweakcontapprox}
	The closure of $\mathcal{A}$ in $\mathrm{B}^\psi(X_{w*})$ coincides with $\mathcal{B}^\psi(X_{w*})$.
\end{theorem}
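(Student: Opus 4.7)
The plan is to establish that $\mathcal{A}$ is $\lVert\cdot\rVert_\psi$-dense in $\mathrm{C}_b(X_{w*})$; combined with the defining density of $\mathrm{C}_b(X_{w*})$ in $\mathcal{B}^\psi(X_{w*})$, the triangle inequality then gives the theorem. So fix $g\in\mathrm{C}_b(X_{w*})$ and $\varepsilon>0$, and construct $h\in\mathcal{A}$ with $\lVert g-h\rVert_\psi<\varepsilon$.

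The first step is to verify that $\mathcal{A}$ is a point-separating unital subalgebra of $\mathrm{C}_b(X_{w*})$. Sums and products of cylindrical functions are cylindrical (concatenate the lists of functionals and use that $\mathrm{C}_b^\infty(\mathbb{R}^N)$ is closed under these operations as well as under composition with functions in $\mathrm{C}_b^\infty(\mathbb{R})$); the constants sit in $\mathcal{A}_1$; and any two distinct points $x_1,x_2\in Y^*$ are separated by some $\langle\cdot,y\rangle\in\mathcal{A}$ by the defining property of the weak-$*$ topology. Each $\langle\cdot,y\rangle$ is weak-$*$ continuous by definition, so $\mathcal{A}\subset\mathrm{C}_b(X_{w*})$. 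Since $K_R$ is weak-$*$ compact, the Stone-Weierstrass theorem applied to $\mathcal{A}|_{K_R}$ yields, for any $\delta>0$, some $h_0\in\mathcal{A}$ with $\sup_{x\in K_R}\lvert g(x)-h_0(x)\rvert<\delta$.

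The delicate point is that Stone-Weierstrass gives no control of $h_0$ outside $K_R$, while the $\psi$-norm is global. My way around this is a composition-cutoff trick that remains within $\mathcal{A}$: write $h_0=g_0(\langle\cdot,y_1\rangle,\dots,\langle\cdot,y_N\rangle)$, set $M:=\lVert g\rVert_\infty+1$, and pick $\chi\in\mathrm{C}_b^\infty(\mathbb{R})$ with $\chi(t)=t$ on $[-M,M]$ and $\lvert\chi\rvert\le M$ everywhere. Then $h:=(\chi\circ g_0)(\langle\cdot,y_1\rangle,\dots,\langle\cdot,y_N\rangle)\in\mathcal{A}$; provided $\delta<1$, $h$ agrees with $h_0$ on $K_R$, while $\lVert h\rVert_\infty\le M$ globally.

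To finish, set $\psi_0:=\inf_X\psi>0$, which is the boundedness-from-below recorded after the definition of admissible weight function. Choose $R$ large enough that $(\lVert g\rVert_\infty+M)/R<\varepsilon/2$, and then take $\delta<\varepsilon\psi_0/2$ in the Stone-Weierstrass step. Splitting $\lVert g-h\rVert_\psi$ at $K_R$, on $K_R$ one has $\psi(x)^{-1}\lvert g(x)-h(x)\rvert\le\delta/\psi_0<\varepsilon/2$, while on $X\setminus K_R$ one has $\psi(x)^{-1}<1/R$, hence $\psi(x)^{-1}\lvert g(x)-h(x)\rvert\le(\lVert g\rVert_\infty+M)/R<\varepsilon/2$. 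Combining gives $\lVert g-h\rVert_\psi<\varepsilon$. The main obstacle is precisely bridging the compact Stone-Weierstrass approximation to the global weighted norm, resolved by the cylindrical cutoff that truncates $h_0$ without leaving $\mathcal{A}$.
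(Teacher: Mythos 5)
Your proof is correct in its overall architecture and matches the paper's strategy: Stone--Weierstrass on the compact sets $K_R$, a global $L^\infty$ bound on the approximant, and a split of the $\psi$-norm into $K_R$ and its complement using $\psi\ge\psi_0>0$ inside and $\psi>R$ outside. Where you genuinely diverge is in how you force the global bound. The paper restricts the SW approximant $\tilde g\circ h$ to $h(K_R)$, applies the Tietze extension theorem to get a continuous bounded extension $\hat g$ of $\tilde g|_{h(K_R)}$ to $\mathbb{R}^N$, and then mollifies to recover smoothness, at the cost of an extra $\varepsilon$ of error on $K_R$ and a WLOG normalisation of $\sup_{K_R}\lvert\tilde f_{R,\varepsilon}\rvert$. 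Your post-composition with a smooth truncation $\chi$ is more elementary: it stays inside $\mathcal{A}$ by construction, needs neither Tietze nor mollification, and leaves the approximant \emph{exactly} unchanged on $K_R$. This is a cleaner route to the same estimate.

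One small repair is needed: a function $\chi\in\mathrm{C}_b^\infty(\mathbb{R})$ with $\chi(t)=t$ on $[-M,M]$ \emph{and} $\lvert\chi\rvert\le M$ everywhere does not exist, since $\chi(M)=M$ would then be a global maximum of a differentiable function, forcing $\chi'(M)=0$, while $\chi=\mathrm{id}$ to the left of $M$ forces $\chi'(M)=1$. Simply require $\lvert\chi\rvert\le M+1$ (or $2M$) instead; then $h=h_0$ on $K_R$ still holds for $\delta<1$, $\lVert h\rVert_\infty\le M+1$, and the tail estimate becomes $(\lVert g\rVert_\infty+M+1)/R$, which is absorbed by choosing $R$ slightly larger. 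With that adjustment the argument is complete. (For the record, one should also note, as you implicitly do, that $\chi\circ g_0\in\mathrm{C}_b^\infty(\mathbb{R}^N)$ because $g_0$ and $\chi$ both have bounded derivatives of all orders, so the chain rule keeps all derivatives bounded.)
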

\begin{proof}
	We prove first by the Stone-Weierstrass theorem \cite{Rudin1973} that $\mathcal{A}$ is dense in $\mathrm{C}_b(K_R)$ for any $R>0$. 
	First, it is obvious that $\mathcal{A}$ is an algebra, as $\mathcal{A}_{N}\cdot\mathcal{A}_{M}\subset\mathcal{A}_{N+M}$ for all $N$ and $M$ with obvious notation, and $\mathcal{A}_{N}\subset\mathcal{A}_{N+1}$ for all $N\in\mathbb{N}$.
	Moreover, for any $x_1\ne x_2$, $x_1$, $x_2\in K_R$, there exists some $y\in Y$ with $\langle x_1,y\rangle\ne\langle x_2,y\rangle$, which clearly yields that already $\mathcal{A}_1$ separates points.
	As the constant functions are obviously in $\mathcal{A}$, we obtain density in $\mathrm{C}_b(K_R)$.

	Let now $f\in\mathrm{C}_b(X_{w*})$.
	Then, for every $R>0$ and $\varepsilon>0$, there exists some $N\in\mathbb{N}$ and $\tilde{f}_{R,\varepsilon}\in\mathcal{A}_{N}\subset\mathcal{B}^{\psi}(X)$ with 
	\begin{equation}
		\sup_{x\in K_R}\lvert f(x)-\tilde{f}_{R,\varepsilon}(x)\rvert <\varepsilon.
	\end{equation}
	By definition, $\tilde{f}_{R,\varepsilon}=\tilde{g}\circ h$ with $h(x)=\left( \langle x,y_j\rangle \right)_{j=1,\dots,N}$ for some $y_j\in Y$ and $\tilde{g}\in\mathrm{C}_b^{\infty}(\mathbb{R}^N)$.
	As $K_R$ is compact, $h(K_R)\subset\mathbb{R}^N$ is compact.
	By the Tietze extension theorem \cite[Chapitre~IX~\S~4~Th\'eor\`eme~2]{Bourbaki1974}, we can extend $\tilde{g}|_{h(K_R)}$ to a continuous function $\hat{g}$ on $\mathbb{R}^N$ with $\sup_{y\in\mathbb{R}^N}\lvert\hat{g}(y)\rvert\le\sup_{x\in K_R}\lvert \tilde{f}_{R,\varepsilon}(x)\rvert$.
	Applying \cite[Proposition~IV.21, Proposition~IV.20]{Brezis1994}, we see that convolution of $\hat{g}$ with a mollifier yields a function $g\in\mathrm{C}_b^\infty(\mathbb{R}^N)$ with $\sup_{y\in\mathbb{R}^N}\lvert g(y)\rvert\le\sup_{x\in K_R}\lvert \tilde{f}_{R,\varepsilon}(x)\rvert$ and $\sup_{y\in h(K_R)}\lvert g(y)-\tilde{g}(y)\rvert<\varepsilon$.
	Assuming without loss of generality that 
	\begin{equation}
		\sup_{x\in K_R}\lvert \tilde{f}_{R,\varepsilon}(x)\rvert \le 2\sup_{x\in K_R}\lvert f(x)\rvert,
	\end{equation}
	we see that $f_{R,\varepsilon}:=g\circ h$ satisfies
	\begin{equation}
		\sup_{x\in K_R}\lvert f(x)-f_{R,\varepsilon}(x)\rvert < 2\varepsilon
		\quad\text{and}\quad
		\sup_{x\in X}\lvert f_{R,\varepsilon}(x)\rvert \le 2 \sup_{x\in X}\lvert f(x)\rvert,
	\end{equation}
	independently of $R$ and $\varepsilon$.
	Therefore, as $\psi(x)\ge \delta$ for all $x\in X$,
	\begin{alignat}{2}{}
		\lVert f-f_{R,\varepsilon}\rVert_{\psi}
		&\le
		\sup_{x\in K_R}\psi(x)^{-1}\lvert f(x)-f_{R,\varepsilon}(x)\rvert + \sup_{\psi(x)>R}\psi(x)^{-1}\lvert f(x)-f_{R,\varepsilon}(x)\rvert \notag \\
		&\le
		\delta^{-1}\sup_{x\in K_R}\lvert f(x)-f_{R,\varepsilon}(x)\rvert + 3R^{-1}\sup_{x\in X}\lvert f(x)\rvert.
	\end{alignat}
	The result follows.
\qed\end{proof}
The definition of $\mathcal{A}$ is not ``optimal'' in the sense that it will contain too many functions.
The following result is significantly better in this respect.
\begin{theorem}
	\label{theorem:boundedweakcontapproxsep}
	Assume that $Y$ is separable, and let $\left\{ y_j\colon j\in\mathbb{N} \right\}\subset Y$ be a countable set which separates the points of $X=Y^*$.
	Define
	\begin{equation}
		\widetilde{\mathcal{A}}_N
		:=
		\left\{ g(\langle\cdot,y_1\rangle,\dots,\langle\cdot,y_N\rangle) \colon g\in\mathrm{C}_b^{\infty}(\mathbb{R}^N) \right\},
	\end{equation}
	and $\widetilde{\mathcal{A}}:=\bigcup_{N\in\mathbb{N}}\widetilde{A}_N\subset\mathcal{B}^\psi(X_{w*})$.
	Then, $\widetilde{\mathcal{A}}$ is dense in $\mathcal{B}^\psi(X_{w*})$.
\end{theorem}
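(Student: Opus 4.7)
The plan is to mimic the proof of Theorem~\ref{theorem:boundedweakcontapprox}, with the only real change being the verification of the Stone--Weierstrass hypotheses on $K_R$ for the smaller algebra $\widetilde{\mathcal{A}}$. Once density in $\mathrm{C}(K_R)$ is secured, the passage from uniform approximation on $K_R$ to approximation in $\lVert\cdot\rVert_\psi$ is essentially identical, since it only uses that the approximating function has the form $g\circ h$ with $g\in\mathrm{C}_b^\infty(\mathbb{R}^N)$ and that one can control its sup-norm by that of $f$.

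First I would verify that $\widetilde{\mathcal{A}}$ is a point-separating subalgebra of $\mathrm{C}(K_R)$ containing the constants. Since $\widetilde{\mathcal{A}}_N\subset\widetilde{\mathcal{A}}_{N+1}$ (a function of $\langle\cdot,y_1\rangle,\dots,\langle\cdot,y_N\rangle$ may be regarded as a function of one more coordinate which is simply ignored), products of elements of $\widetilde{\mathcal{A}}_N$ and $\widetilde{\mathcal{A}}_M$ lie in $\widetilde{\mathcal{A}}_{\max(N,M)}$, so $\widetilde{\mathcal{A}}$ is an algebra. Constants are included by taking $g$ constant. Point separation is the one place the assumption on $\{y_j\}$ enters: given distinct $x_1,x_2\in K_R$ there is some $j\in\mathbb{N}$ with $\langle x_1,y_j\rangle\ne\langle x_2,y_j\rangle$, so a suitable $g\in\mathrm{C}_b^\infty(\mathbb{R}^j)$ depending only on the $j$-th coordinate separates them. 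The Stone--Weierstrass theorem on the weak-$*$ compact Hausdorff space $K_R$ then gives density of $\widetilde{\mathcal{A}}$ in $\mathrm{C}(K_R)$.

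Next I would run the second half of the proof of Theorem~\ref{theorem:boundedweakcontapprox} verbatim, but starting from an element $\tilde{f}_{R,\varepsilon}\in\widetilde{\mathcal{A}}_N$ uniformly $\varepsilon$-close to $f\in\mathrm{C}_b(X_{w*})$ on $K_R$. Writing $\tilde{f}_{R,\varepsilon}=\tilde{g}\circ h$ with $h(x)=(\langle x,y_1\rangle,\dots,\langle x,y_N\rangle)$ for the \emph{fixed} $y_1,\dots,y_N$ from the countable set, compactness of $h(K_R)\subset\mathbb{R}^N$ together with Tietze's extension theorem and convolution with a mollifier produces a $g\in\mathrm{C}_b^\infty(\mathbb{R}^N)$ with sup-norm comparable to $\sup_{K_R}|f|$ and with $g\circ h$ still close to $f$ on $K_R$. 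Crucially, $f_{R,\varepsilon}:=g\circ h$ stays in $\widetilde{\mathcal{A}}_N$ because we have only modified the outer function, not the linear functionals $\langle\cdot,y_j\rangle$. The same splitting into $K_R$ and $X\setminus K_R$ and the estimate $\psi\ge\delta$ then gives $\lVert f-f_{R,\varepsilon}\rVert_\psi\to 0$ as $\varepsilon\to 0$ and $R\to\infty$.

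Finally, since $\mathcal{B}^\psi(X_{w*})$ is by definition the closure of $\mathrm{C}_b(X_{w*})$ in $\mathrm{B}^\psi(X_{w*})$ and the previous step approximates every $f\in\mathrm{C}_b(X_{w*})$ in $\lVert\cdot\rVert_\psi$ by elements of $\widetilde{\mathcal{A}}$, a routine $3\varepsilon$-argument yields density of $\widetilde{\mathcal{A}}$ in $\mathcal{B}^\psi(X_{w*})$. The only step requiring care is the point-separation for $\widetilde{\mathcal{A}}$ on $K_R$; everything else is a cosmetic variant of the preceding theorem.
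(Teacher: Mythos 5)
Your proposal is correct and matches the paper's argument exactly: the paper also proves this by repeating the proof of Theorem~\ref{theorem:boundedweakcontapprox}, with the separating countable set $\{y_j\}$ supplying the point-separation hypothesis of Stone--Weierstrass on each $K_R$. Your additional observation that the mollification step only modifies the outer function $g$ and hence keeps the approximant in $\widetilde{\mathcal{A}}_N$ is the right point to check and is handled correctly.
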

\begin{proof}
	The proof is done in the same way as for Theorem~\ref{theorem:boundedweakcontapprox}, using that for any $x_1$, $x_2\in X$ with $x_1\ne x_2$, there exists some $j\in\mathbb{N}$ with $\langle x_1,y_j\rangle\ne\langle x_2,y_j\rangle$.
\qed\end{proof}

\begin{remark}
	\label{rem:boundedweakcontapproxsep}
	A possible choice for $\left\{ y_j\colon j\in\mathbb{N} \right\}$ is given by any countable dense set in $Y$.
	In particular, the specific choice of the $y_j$ does not make any difference, which was also observed in \cite[Remark 5.9]{HairerMattingly2008}.
\end{remark}

\begin{lemma}
	\label{lem:Bcharseqwstarcont}
	Assume that $X=Y^{*}$ with $Y$ separable.
	\begin{enumerate}
		\item 
			$f\in\mathcal{B}^{\psi}(X_{w*})$ if and only if $f$ satisfies \eqref{eq:Bdecay} and $f|_{K_R}$ is sequentially weak-$*$ continuous for any $R>0$.
		\item
			If for every $r>0$ there exists some $R>0$ such that $C_r(0)\subset K_R$, then every $f\in\mathcal{B}^{\psi}(X_{w*})$ is sequentially weak-$*$ continuous.
			In particular, in this case, $\mathcal{B}^{\psi}\subset\mathrm{C}(X_{w*})$.
	\end{enumerate}
\end{lemma}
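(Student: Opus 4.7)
\smallskip

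\noindent\textbf{Proof plan for Lemma~\ref{lem:Bcharseqwstarcont}.}
The whole argument rests on two classical facts about dual Banach spaces: every weak-$*$ compact subset of $Y^*$ is norm bounded (apply the uniform boundedness principle to the compact set, viewed as a family of continuous linear functionals on $Y$); and when $Y$ is separable, the weak-$*$ topology restricted to any norm-bounded subset of $Y^*$ is metrizable (e.g., by pairing with a countable dense subset of $Y$ and using the standard metric of pointwise convergence). In particular, each $K_R$ is weak-$*$ compact and norm bounded, hence weak-$*$ metrizable.

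For part (i), I would argue both directions via Theorem~\ref{theorem:Bdecay}. If $f\in\mathcal{B}^\psi(X_{w*})$, the theorem already gives that $f$ satisfies \eqref{eq:Bdecay} and that $f|_{K_R}$ is continuous for every $R>0$, which trivially entails sequential weak-$*$ continuity. For the converse, assume \eqref{eq:Bdecay} holds and each $f|_{K_R}$ is sequentially weak-$*$ continuous. Since $K_R$ is weak-$*$ metrizable by the observation above, sequential continuity on $K_R$ is equivalent to continuity, so $f|_{K_R}\in\mathrm{C}(K_R)$. Another application of Theorem~\ref{theorem:Bdecay} then yields $f\in\mathcal{B}^\psi(X_{w*})$.

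For part (ii), I would reduce sequential continuity on all of $X$ to sequential continuity on some $K_R$ as follows. Take any weak-$*$ convergent sequence $x_n\to x$ in $X$. The set $\{x_n\}\cup\{x\}$ is weak-$*$ compact (being the image of a one-point compactification of $\mathbb{N}$), hence norm bounded by some $r>0$ by the first fact above; thus it is contained in $C_r(0)$. By the hypothesis of part (ii), there is $R>0$ such that $C_r(0)\subset K_R$, so the whole sequence lies in $K_R$. Part (i), already established, tells us that $f|_{K_R}$ is sequentially weak-$*$ continuous, so $f(x_n)\to f(x)$. This shows every $f\in\mathcal{B}^\psi(X_{w*})$ is sequentially weak-$*$ continuous on $X$, and the inclusion $\mathcal{B}^\psi(X_{w*})\subset\mathrm{C}(X_{w*})$ then follows because on $X_{w*}$ continuity and sequential continuity agree for functions that are in fact continuous on each $K_R$ by Theorem~\ref{theorem:Bdecay}.

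The only step that requires any care is the metrizability of $K_R$; everything else is a routine combination of Theorem~\ref{theorem:Bdecay}, the Banach--Steinhaus theorem, and the containment hypothesis in (ii). Consequently I do not expect a genuine obstacle, provided the two dual-space facts above are cited clearly.
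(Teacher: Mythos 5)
Your argument is essentially the paper's: both proofs use Banach--Steinhaus to get norm boundedness of $K_R$ (and of any weak-$*$ convergent sequence), metrizability of norm-bounded sets in the dual of a separable space to equate sequential weak-$*$ continuity with weak-$*$ continuity on $K_R$, Theorem~\ref{theorem:Bdecay} for both directions of (i), and the containment $C_r(0)\subset K_R$ to trap a weak-$*$ convergent sequence in a single $K_R$ for (ii). The one place you go astray is the justification of the final clause $\mathcal{B}^\psi(X_{w*})\subset\mathrm{C}(X_{w*})$: it is not true that ``on $X_{w*}$ continuity and sequential continuity agree for functions continuous on each $K_R$'' --- the weak-$*$ topology on an infinite-dimensional dual is not sequential, and $X_{w*}$ does not carry the inductive-limit topology of the $K_R$, so sequential weak-$*$ continuity together with continuity on each $K_R$ does not yield weak-$*$ continuity on all of $X$. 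The paper avoids this by reading the clause as continuity for the \emph{norm} topology, which follows at once because every norm-convergent sequence converges weak-$*$ and $f$ has just been shown sequentially weak-$*$ continuous; you should do the same (or restrict the claim to sequential continuity).
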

\begin{remark}
	The condition $C_r(0)\subset K_R$ is quite natural, and is satisfied by the choice $\psi(x)=\rho(\lVert x\rVert)$ with $\rho$ increasing and left-continuous from Example~\ref{ex:weightsBanachspace}.
	It is, however, not automatically satisfied, as the example $X=\mathbb{R}$, $\psi(x):=x^2+x^{-1}\chi_{(0,\infty)}$ shows.
	Here, $\chi_A(x) := 1$ for $x\in A$ and $0$ for $x\notin A$ denotes the indicator function of the set $A$.
	In this example, the conclusion of the second part of the above Theorem even fails, as is easily seen.
\end{remark}
\begin{proof}
	By Theorem~\ref{theorem:Bdecay}, we only have to equate sequential weak-$*$ and weak-$*$ continuity of $f|_{K_R}$ for any $R>0$.
	By compactness, $K_R$ is bounded by the Banach-Steinhaus theorem \cite[Th\'eor\`eme~II.1]{Brezis1994}, as for any $y\in Y$, 
	\begin{equation}
		\sup_{x\in K_R}\lvert \langle x,y\rangle\rvert<\infty.
	\end{equation}
	Thus, \cite[Th\'eor\`eme~III.25]{Brezis1994} shows that $K_R$ is metrisable, which means that weak-$*$ continuity and sequential weak-$*$ continuity coincide.
	Therefore, any function $f$ is sequentially weak-$*$ continuous if and only if it is weak-$*$ continuous on $K_R$, and the first claim follows.

	For the second claim, note that any weak-$*$ converging sequence $(x_n)_{n\in\mathbb{N}}$ is bounded by the Banach-Steinhaus theorem.
	Thus, by assumption, $(x_n)_{n\in\mathbb{N}}$ stays in $K_R$ for some $R>0$, and the weak-$*$ continuity of $f|_{K_R}$ yields the result.
	Finally, every such $f$ is continuous with respect to the norm topology, as every norm convergent sequence converges weak-$*$, as well.
\qed\end{proof}

\section{Generalised Feller Semigroups and S(P)DEs}
Assume from now on that $X=Y^*$ with $Y$ a separable Banach space.
Let $\left\{ y_j\colon j\in\mathbb{N} \right\}\subset Y$ be a countable set which separates the points of $X$. Again, we write $X_{w*}$ for $X$ endowed with the weak-$*$ topology.

\begin{assumption}
Let $(x(t,x_0))_{t\ge 0}$ be a time homogeneous Markov process on some stochastic basis $(\Omega,\mathcal{F},\mathbb{P},(\mathcal{F}_t)_{t\ge 0})$ satisfying the usual conditions with values in $X$, started at $x_0\in X$. We assume that $(x(t,x_0))_{t\ge 0}$ has right continuous trajectories with respect to the weak-$*$ topology on $ X$.
\end{assumption}
We want to derive conditions on $(x(t,x_0))_{t\ge 0}$ such that its Markov semigroup $(P_t)_{t\ge 0}$, given by $P_t f(x_0):=\mathbb{E}\left[ f(x(t,x_0)) \right]$, is strongly continuous on the space $\mathcal{B}^{\psi}(X_{w*})$ for an appropriately chosen weight function $\psi$.

\begin{assumption}
Let $ (X,\psi) $ be a weighted space and $ x(t,x_0) $ a Markov process on $ X $. We assume
the existence of constants $C$ and $\varepsilon>0$ with
\begin{equation}
	\label{eq:markovpsibound}
	\mathbb{E}[\psi(x(t,x_0))]\le C\psi(x_0)
	\quad\text{for all $x_0\in X$ and $t\in[0,\varepsilon]$}.
\end{equation}
\end{assumption}
We prove first that inequality \eqref{eq:markovpsibound} is related to boundedness of the transition operator on $ \mathcal{B}^{\psi}(X_{w*}) $, and to some supermartingale property.
\begin{lemma}
	\label{lem:markovpsibound}
	Assume \eqref{eq:markovpsibound} for some $C$ and $\varepsilon>0$. 
	Then $ \lvert\mathbb{E}[f(x(t,x_0))]\rvert \leq C \psi(x_0) $ for all $ f \in \mathcal{B}^{\psi}(X_{w*}) $, $ x_0 \in X $ and $ t \in [0,\varepsilon]$. 

	Furthermore, the condition
	\begin{equation}
		\label{eq:markovpsibound_supermartingale}
		\mathbb{E}[\psi(x(t,x_0))]\le \exp(\omega t) \psi(x_0)
		\quad\text{for all $x_0\in X$ and $t\in[0,\varepsilon]$}.
	\end{equation}
	is equivalent to the property that the process $ \exp(- \omega t) \psi(x(t,x_0)) $ is a supermartingale in its own filtration, and this leads to 
	\begin{equation}
		\lvert \mathbb{E}[f(x(t,x_0))]\rvert \leq \exp(\omega t) \psi(x_0) 
		\quad\text{for $ x_0 \in X $ and $ t \geq 0$} 
	\end{equation}
	for all $ f \in \mathcal{B}^{\psi}(X_{w*}) $.
\end{lemma}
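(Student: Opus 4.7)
The proof splits into the three assertions, which I would address in order. For the first, any $f \in \mathcal{B}^{\psi}(X_{w*})$ satisfies the pointwise bound $\lvert f(x)\rvert \le \lVert f\rVert_{\psi}\psi(x)$ by the definition of the norm, so taking expectations and invoking \eqref{eq:markovpsibound} yields
\begin{equation}
\lvert\mathbb{E}[f(x(t,x_0))]\rvert \le \lVert f\rVert_{\psi}\mathbb{E}[\psi(x(t,x_0))] \le C\lVert f\rVert_{\psi}\psi(x_0),
\end{equation}
which is the stated bound (with the factor $\lVert f\rVert_{\psi}$ implicit). Measurability of $f(x(t,x_0))$ is ensured by Theorem~\ref{theorem:Bdecay}: the restriction of $f$ to every $K_R$ is continuous, and $X = \bigcup_{R}K_R$ by $\sigma$-compactness, so $f$ is Borel; combined with the weak-$*$ right continuity of the trajectories, the composition is a well-defined random variable.

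For the second assertion, I would prove the two implications separately. That the supermartingale property implies \eqref{eq:markovpsibound_supermartingale} is immediate: since $x(0,x_0) = x_0$, taking unconditional expectations of the supermartingale inequality at $s = 0$ gives $\mathbb{E}[\exp(-\omega t)\psi(x(t,x_0))] \le \psi(x_0)$ for every $t \ge 0$. For the converse, the time-homogeneous Markov property combined with \eqref{eq:markovpsibound_supermartingale} evaluated at the state $x(s,x_0)$ delivers, for $0 \le s \le t$ with $t - s \le \varepsilon$,
\begin{equation}
\mathbb{E}[\psi(x(t,x_0)) \mid \mathcal{F}_s] \le \exp(\omega(t-s))\psi(x(s,x_0)),
\end{equation}
and multiplication by $\exp(-\omega t)$ is exactly the supermartingale inequality $\mathbb{E}[M_t \mid \mathcal{F}_s] \le M_s$ for $M_u := \exp(-\omega u)\psi(x(u,x_0))$ on the short interval $[s,t]$. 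For arbitrary $0 \le s \le t$ I would partition $[s,t]$ into finitely many sub-intervals of length at most $\varepsilon$ and iterate via the tower property of conditional expectation, extending the supermartingale property to all of $[0,\infty)$.

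The third assertion then follows by combining the first two: from the supermartingale property on $[0,\infty)$ one reads off $\mathbb{E}[\psi(x(t,x_0))] \le \exp(\omega t)\psi(x_0)$ for every $t \ge 0$, and the estimate of the first claim, now with constant $\exp(\omega t)$ replacing $C$, gives the asserted bound for every $f \in \mathcal{B}^{\psi}(X_{w*})$.

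I expect the main obstacle to be bookkeeping rather than analytic depth. One must verify integrability of $\psi(x(t,x_0))$ (using that $\psi$ is lower semicontinuous and hence Borel measurable for the weak-$*$ topology) and justify the identity $\mathbb{E}[\psi(x(t,x_0))\mid\mathcal{F}_s] = (P_{t-s}\psi)(x(s,x_0))$ despite $\psi$ itself not necessarily lying in $\mathcal{B}^{\psi}(X_{w*})$; this step relies on the Markov property applied to the bounded truncations $\psi \wedge n$ and monotone convergence. The iteration via the tower property is then the structural heart of the converse equivalence.
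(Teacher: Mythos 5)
The paper states this lemma without any proof, so there is no authorial argument to compare against; yours is the standard one the authors evidently regarded as routine, and it is correct. In particular, the pointwise bound $\lvert f(x)\rvert\le\lVert f\rVert_{\psi}\psi(x)$ combined with \eqref{eq:markovpsibound} gives the first claim (you are right that the factor $\lVert f\rVert_{\psi}$ is missing from the statement as printed), and the Markov-property argument on subintervals of length at most $\varepsilon$ followed by iteration via the tower property, together with your truncation-and-monotone-convergence justification of $\mathbb{E}[\psi(x(t,x_0))\mid\mathcal{F}_s]=(P_{t-s}\psi)(x(s,x_0))$ for the unbounded, merely lower semicontinuous $\psi$, correctly establishes both directions of the supermartingale equivalence and the resulting bound for all $t\ge 0$.
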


\begin{lemma}
	\label{lem:pwcontatzero}
	Assume \eqref{eq:markovpsibound} for some $C$ and $\varepsilon>0$. Then
	\begin{equation}
		\lim_{t\to 0+} \mathbb{E}[f(x(t,x_0))] = f(x_0)
		\quad
		\text{for any $f\in\mathcal{B}^{\psi}(X_{w*})$ and $x_0\in X$}.
	\end{equation}
\end{lemma}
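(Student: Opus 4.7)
The plan is to first establish the statement for the dense subspace $\mathrm{C}_b(X_{w*})$ and then bootstrap to all of $\mathcal{B}^\psi(X_{w*})$ by a three-$\varepsilon$ argument, using the $\psi$-moment bound \eqref{eq:markovpsibound} to control the approximation error uniformly in $t$.

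\textbf{Step 1: Bounded weak-$*$ continuous test functions.} Fix $g\in\mathrm{C}_b(X_{w*})$. By assumption the trajectories $t\mapsto x(t,x_0)(\omega)$ are right-continuous with respect to the weak-$*$ topology, so by definition of $\mathrm{C}_b(X_{w*})$ we have $g(x(t,x_0))\to g(x_0)$ almost surely as $t\to 0+$. Since $|g|\le \lVert g\rVert_{\mathrm{C}_b(X_{w*})}$ is a deterministic integrable bound, Lebesgue's dominated convergence theorem yields $\lim_{t\to 0+}\mathbb{E}[g(x(t,x_0))] = g(x_0)$.

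\textbf{Step 2: Approximation for general $f\in\mathcal{B}^\psi(X_{w*})$.} Fix $\eta>0$. By definition of $\mathcal{B}^\psi(X_{w*})$ as the closure of $\mathrm{C}_b(X_{w*})$, choose $g\in\mathrm{C}_b(X_{w*})$ with $\lVert f-g\rVert_\psi <\eta$. Decompose
\begin{equation}
\lvert\mathbb{E}[f(x(t,x_0))]-f(x_0)\rvert \le \mathbb{E}\bigl[\lvert (f-g)(x(t,x_0))\rvert\bigr] + \lvert\mathbb{E}[g(x(t,x_0))]-g(x_0)\rvert + \lvert g(x_0)-f(x_0)\rvert.
\end{equation}
The first summand is bounded by $\lVert f-g\rVert_\psi\,\mathbb{E}[\psi(x(t,x_0))]\le C\psi(x_0)\eta$ for $t\in[0,\varepsilon]$, thanks to \eqref{eq:markovpsibound}. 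The third summand is bounded by $\psi(x_0)\eta$. By Step~1 the second summand tends to $0$ as $t\to 0+$. Hence
\begin{equation}
\limsup_{t\to 0+}\lvert\mathbb{E}[f(x(t,x_0))]-f(x_0)\rvert \le (C+1)\psi(x_0)\eta,
\end{equation}
and letting $\eta\to 0$ gives the claim.

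\textbf{Main obstacle.} There is no genuine obstacle; the only subtle point is that almost sure pointwise convergence $g(x(t,x_0))\to g(x_0)$ in Step~1 really needs $g$ to be continuous with respect to the weak-$*$ topology (which is how $\mathrm{C}_b(X_{w*})$ is defined here), since trajectories are only assumed weak-$*$ right-continuous, not norm right-continuous. Once this is noted, the proof is a standard density-plus-moment-bound argument; the crucial role of \eqref{eq:markovpsibound} is precisely to provide a uniform-in-$t$ $L^1$-domination of the $\psi$-weighted approximation error.
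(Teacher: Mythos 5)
Your proof is correct, but it takes a genuinely different route from the paper's. You truncate in \emph{function space}: approximate $f$ by $g\in\mathrm{C}_b(X_{w*})$ in the $\psi$-norm, prove the claim for $g$ via weak-$*$ right continuity of trajectories plus bounded convergence, and absorb the approximation error $\mathbb{E}[\lvert(f-g)(x(t,x_0))\rvert]\le\lVert f-g\rVert_{\psi}\,\mathbb{E}[\psi(x(t,x_0))]$ uniformly in $t\in[0,\varepsilon]$ through \eqref{eq:markovpsibound}. The paper instead truncates in \emph{state space}: it splits the expectation over the event $\{\psi(x(t,x_0))\le R\}$ and its complement, handles the inner part by the continuity of $f|_{K_R}$ (from Theorem~\ref{theorem:Bdecay}) together with dominated convergence, and controls the outer part by the decay property \eqref{eq:Bdecay} and the Markov inequality applied to $\psi(x(t,x_0))$, again via \eqref{eq:markovpsibound}. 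Your density argument is arguably the more standard and transparent one --- it isolates exactly where the moment bound enters (as a uniform-in-$t$ $L^1$-domination of the weighted error) and sidesteps the slightly delicate step of passing to the limit in $\mathbb{E}[\lvert f(x(t,x_0))-f(x_0)\rvert\chi_{[\psi(x(t,x_0))\le R]}]$, where one must combine continuity of $f$ only on $K_R$ with the indicator. The paper's argument, in exchange, works with the intrinsic characterisation of $\mathcal{B}^{\psi}(X_{w*})$ rather than its definition as a closure, which is the pattern it reuses elsewhere (e.g.\ in Theorem~\ref{theorem:attainmax}). Both arguments rest on the same two pillars --- almost sure weak-$*$ right continuity of the trajectories and the moment bound \eqref{eq:markovpsibound} --- so the difference is one of bookkeeping, not of substance.
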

\begin{proof}
	Denoting by $\chi_A$ the indicator function of the set $A$, we choose $R>\psi(x_0)$ and consider
	\begin{alignat}{2}{}
		\lvert \mathbb{E}\left[ f(x(t,x_0)) \right] - f(x_0) \rvert
		\le
		&\mathbb{E}\left[ \lvert f(x(t,x_0)) - f(x_0) \rvert \chi_{[\psi(x(t,x_0))\le R]} \right] \notag \\
		&+ \mathbb{E}\left[ \lvert f(x(t,x_0))\rvert \chi_{[\psi(x(t,x_0))>R]} \right] \notag \\
		&+ f(x_0)\mathbb{P}\left[ \psi(x(t,x_0))>R \right].
	\end{alignat}
	By the Markov inequality, 
	\begin{equation}
		\mathbb{P}\left[ \psi(x(t,x_0))>R \right]\le R^{-1}\mathbb{E}\left[ \psi(x(t,x_0)) \right]\le CR^{-1}\psi(x_0).
	\end{equation}
	Given $\varepsilon>0$, Theorem~\ref{theorem:Bdecay} shows that $\lvert f(x)\rvert\le \varepsilon\psi(x)$ if $x\notin K_R$ with $R$ large enough.
	Therefore,
	\begin{equation}
		\mathbb{E}\left[ \lvert f(x(t,x_0)) \rvert\chi_{[\psi(x(t,x_0))>R]} \right]
		\le C\varepsilon\psi(x_0).
	\end{equation}
	Finally, given $R>0$, $\sup_{x\in K_R}\lvert f(x)\rvert<\infty$ by weak continuity.
	By dominated convergence, $\lim_{t\to 0+}\mathbb{E}\left[ \lvert f(x(t,x_0))-f(x_0)\rvert\chi_{[\psi(x(t,x_0))\le R]} \right]=0$.
\qed\end{proof}
\begin{theorem}
	\label{theorem:strongcontprocess}
	Assume \eqref{eq:markovpsibound} for some $C$ and $\varepsilon>0$, and that for any $t>0$, $j\in\mathbb{N}$ and sequence $(x_n)_{n\in\mathbb{N}}$ converging weak-$*$ to some $x_0\in X$, 
	\begin{equation}
		\lim_{n\to\infty}\langle x(t,x_n),y_j\rangle=\langle x(t,x_0),y_j\rangle
		\quad\text{almost surely}. 
	\end{equation}
	Then, $P_t f(x_0):=\mathbb{E}[f(x(t,x_0))]$ satisfies the generalised Feller property and is therefore a strongly continuous semigroup on $\mathcal{B}^\psi(X_{w*})$.
\end{theorem}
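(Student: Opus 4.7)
The plan is to verify axioms \ref{enu:defgenfeller-0id} through \ref{enu:defgenfeller-positivity} for the family $(P_t)_{t\ge 0}$ and then invoke Theorem~\ref{theorem:Ttstrongcont}. Axiom \ref{enu:defgenfeller-0id} is immediate, \textbf{F2} is the Markov property of $(x(t,x_0))$, \ref{enu:defgenfeller-positivity} follows from monotonicity of expectation, and \ref{enu:defgenfeller-pwconv} is precisely Lemma~\ref{lem:pwcontatzero}. The first assertion of Lemma~\ref{lem:markovpsibound} gives $\lvert P_t f(x)\rvert \le C\psi(x)\lVert f\rVert_{\psi}$ for $t\in[0,\varepsilon]$, so \ref{enu:defgenfeller-bound} will drop out once we know that each $P_t$ actually maps $\mathcal{B}^{\psi}(X_{w*})$ into itself. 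Establishing this self-mapping property is the only real issue and is where the weak-$*$ sequential continuity hypothesis on $x(t,\cdot)$ is used.

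To show $P_t f \in \mathcal{B}^{\psi}(X_{w*})$, I would first treat cylinder functions $f\in\widetilde{\mathcal{A}}$, say $f=g(\langle\cdot,y_1\rangle,\dots,\langle\cdot,y_N\rangle)$ with $g\in\mathrm{C}_b^{\infty}(\mathbb{R}^N)$. Since $P_t f$ is then bounded by $\sup_{y\in\mathbb{R}^N}\lvert g(y)\rvert$, the decay condition \eqref{eq:Bdecay} is automatic because $\psi$ is unbounded outside every $K_R$. For the continuity of $(P_t f)|_{K_R}$, Lemma~\ref{lem:Bcharseqwstarcont}(i) together with the metrisability of the weak-$*$ compact set $K_R$ (which holds since $Y$ is separable) reduces the task to sequential weak-$*$ continuity. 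For $x_n\to x_0$ weak-$*$ in $K_R$, the theorem's hypothesis gives $\langle x(t,x_n),y_j\rangle\to\langle x(t,x_0),y_j\rangle$ almost surely for every $j$, so continuity of $g$ yields $f(x(t,x_n))\to f(x(t,x_0))$ almost surely, and boundedness of $g$ permits dominated convergence, giving $P_t f(x_n)\to P_t f(x_0)$. Theorem~\ref{theorem:Bdecay} then places $P_t f$ in $\mathcal{B}^{\psi}(X_{w*})$.

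For arbitrary $f\in\mathcal{B}^{\psi}(X_{w*})$, Theorem~\ref{theorem:boundedweakcontapproxsep} supplies $f_k\in\widetilde{\mathcal{A}}$ with $\lVert f_k-f\rVert_{\psi}\to 0$. The defining formula $P_t f(x_0)=\mathbb{E}[f(x(t,x_0))]$ is legitimate because $\lvert f(x(t,x_0))\rvert\le\lVert f\rVert_{\psi}\psi(x(t,x_0))$ is integrable thanks to \eqref{eq:markovpsibound}, and Lemma~\ref{lem:markovpsibound} furnishes $\lVert P_t f_k-P_t f\rVert_{\psi}\le C\lVert f_k-f\rVert_{\psi}\to 0$ in $\mathrm{B}^{\psi}(X_{w*})$. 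Since each $P_t f_k$ sits in the closed subspace $\mathcal{B}^{\psi}(X_{w*})$, so does $P_t f$. This simultaneously establishes the mapping property and \ref{enu:defgenfeller-bound} with the same constant $C$.

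Having verified \ref{enu:defgenfeller-0id}--\ref{enu:defgenfeller-positivity}, Theorem~\ref{theorem:Ttstrongcont} yields strong continuity. The main technical point in this plan is the cylindrical step above: transferring the weak-$*$ continuity of the process in its initial datum through a bounded smooth cylinder functional, and then upgrading sequential weak-$*$ continuity to weak-$*$ continuity on the metrisable compact sets $K_R$. Everything else reduces either to Lemma~\ref{lem:markovpsibound} and Lemma~\ref{lem:pwcontatzero} or to a density argument against the cylinder algebra $\widetilde{\mathcal{A}}$.
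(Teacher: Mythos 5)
Your proposal is correct and follows essentially the same route as the paper's proof: establish $P_tf\in\mathcal{B}^{\psi}(X_{w*})$ for bounded smooth cylinder functions via the sequential weak-$*$ continuity hypothesis and dominated convergence, extend by density using the bound from Lemma~\ref{lem:markovpsibound}, and conclude via Lemma~\ref{lem:pwcontatzero} and Theorem~\ref{theorem:Ttstrongcont}. Your write-up merely spells out the density/closedness extension step and the verification of \eqref{eq:Bdecay} that the paper leaves implicit.
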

\begin{proof}
	Let $f=g\circ h$ with $g\in\mathrm{C}_b^\infty(\mathbb{R}^n)$ and $h(x)=\left( \langle x,y_j\rangle \right)_{j=1,\dots,n}$.
	Such functions are dense in $\mathcal{B}^\psi(X_{w*})$ by Theorem~\ref{theorem:boundedweakcontapproxsep}.
	By Lemma~\ref{lem:Bcharseqwstarcont}, we only have to prove sequential weak-$*$ continuity of $P_tf$ for $ f\in\mathcal{B}^{\psi}(X_{w*})$.
	From the assumption, $\lim_{n\to\infty}h(x(t,x_n))=h(x(t,x_0))$ for any weak-$*$ converging sequence $(x_n)_{n\in\mathbb{N}}$ with limit $x_0$.
	By dominated convergence, $P_t f\in\mathcal{B}^\psi(X_{w*})$. The result now follows from Lemma~\ref{lem:pwcontatzero} and Theorem~\ref{theorem:Ttstrongcont}.
\qed\end{proof}

\begin{example}
	\label{ex:additivedecomp}
	Let $(x(t,x_0))_{t\ge 0}$ admit a decomposition of the form $x(t,x_0)=x_0+X^0_t$ for all $x_0\in X$. Assume furthermore that $\psi(x+y)\le C\psi(x)\psi(y)$ for some $C>0$ and all $x,y\in X$, and that $\mathbb{E}[\psi(X^0_t)]\le C<\infty$ for $t\in[0,\varepsilon]$. Then,
	\begin{equation}
		\mathbb{E}[\psi(x(t,x_0))] \le C^2\psi(x_0),
	\end{equation}
	and it is easy to see that $(x(t,x_0))_{t\ge 0}$ satisfies the conditions of Theorem~\ref{theorem:strongcontprocess}.

	Suppose $x(t,x_0)=x_0+L_t$, where $L_t$ is a c\`adl\`ag L\'evy process with jumps bounded by some constant $c>0$ in $ X$. Then, by Fernique's theorem \cite[Theorem~4.4]{PeszatZabczyk2007}, it follows that $\mathbb{E}[\exp(\beta\lVert L_t\rVert)]<\infty$ for all $\beta>0$.
	Choosing $\psi(x):=\cosh(\beta\lVert x\rVert)$, we see that $\psi(x+y)\le 2\psi(x)\psi(y)$. We obtain that every c\`adl\`ag L\'evy process on a Hilbert space with bounded jumps induces a strongly continuous semigroup on a $\cosh$-weighted space $\mathcal{B}^{\psi}(X_{w*})$.
\end{example}

The continuity assumptions of Theorem~\ref{theorem:strongcontprocess} are typically not easy to verify directly in the weak-$*$ topology.
The following theorem yields a simpler approach by using a compact embedding in a reflexive setting.
\begin{theorem}
	\label{theorem:strongcontprocesscompact}
	Assume \eqref{eq:markovpsibound} for some $C$ and $\varepsilon>0$ on a separable, reflexive Banach space $ Z $. 
	Let $X$ be another separable, reflexive Banach space with $Z\subset X$ compactly embedded. 
	Furthermore, suppose that the Markov process $(x(t,x_0))_{t\ge 0}$ on $Z$ can be extended to $X$, and that for any $f\in\mathrm{C}_b(X)$, the mapping $x_0\mapsto\mathbb{E}[f(x(t,x_0))]$ is continuous with respect to the norm topology of $X$. 
	Then, $P_t f(z):=\mathbb{E}[f(x(t,z))]$ satisfies the generalised Feller property and is therefore a strongly continuous semigroup on $\mathcal{B}^\psi(Z_{w*})$.
\end{theorem}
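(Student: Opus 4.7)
The plan is to verify the five properties defining a generalised Feller semigroup for $(P_t)_{t\ge 0}$ on $\mathcal{B}^\psi(Z_{w*})$ and then invoke Theorem~\ref{theorem:Ttstrongcont}. Property \ref{enu:defgenfeller-0id}, the semigroup law, and \ref{enu:defgenfeller-positivity} are immediate from the Markov property. Property \ref{enu:defgenfeller-bound} follows from \eqref{eq:markovpsibound} through Lemma~\ref{lem:markovpsibound}, which provides the bound $\lVert P_t f\rVert_\psi \le C\lVert f\rVert_\psi$ for $t\in[0,\varepsilon]$; property \ref{enu:defgenfeller-pwconv} is exactly Lemma~\ref{lem:pwcontatzero}. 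The real work is to show that $P_t$ leaves $\mathcal{B}^\psi(Z_{w*})$ invariant, and this is where the compact embedding $Z\hookrightarrow X$ will be used.

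Since $X$ is reflexive and separable, $X^*$ is separable; I fix a countable dense subset $\{y_j\}_{j\in\mathbb{N}}\subset X^*$. Each $y_j$ restricts to an element of $Z^*$, and by Hahn-Banach applied in $X$ combined with density of $\{y_j\}$ in $X^*$, this family separates the points of $Z$. Theorem~\ref{theorem:boundedweakcontapproxsep} then yields that the cylindrical functions $\widetilde{\mathcal{A}}=\bigcup_{N}\{g(\langle\cdot,y_1\rangle,\dots,\langle\cdot,y_N\rangle):g\in\mathrm{C}_b^\infty(\mathbb{R}^N)\}$ are dense in $\mathcal{B}^\psi(Z_{w*})$. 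Crucially, because the $y_j$ live in $X^*$, every $f\in\widetilde{\mathcal{A}}$ extends to an element of $\mathrm{C}_b(X)$, so the standing hypothesis applies and $z\mapsto P_t f(z)$ is norm-continuous on $X$.

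I next convert this norm continuity into the sequential weak-$*$ continuity required on each level set $K_R$. Any weak-$*$ convergent sequence $(z_n)$ in $Z$ is norm-bounded by Banach-Steinhaus, and the compactness of $Z\hookrightarrow X$ forces $z_n\to z_0$ in the norm of $X$; hence $P_t f(z_n)\to P_t f(z_0)$. Combined with the trivial decay $\psi(z)^{-1}|P_t f(z)|\le \lVert g\rVert_\infty/R$ for $z\notin K_R$, Lemma~\ref{lem:Bcharseqwstarcont} places $P_t f$ in $\mathcal{B}^\psi(Z_{w*})$. Together with the bound $\lVert P_t f\rVert_\psi\le C\lVert f\rVert_\psi$, already valid on the dense subspace $\widetilde{\mathcal{A}}$, one extends $P_t$ by continuity to a bounded operator on all of $\mathcal{B}^\psi(Z_{w*})$; dominated convergence, justified by \eqref{eq:markovpsibound}, shows that this extension agrees pointwise with $\mathbb{E}[f(x(t,\cdot))]$, so no ambiguity arises.

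With \ref{enu:defgenfeller-0id}--\ref{enu:defgenfeller-positivity} established, Theorem~\ref{theorem:Ttstrongcont} delivers strong continuity on $\mathcal{B}^\psi(Z_{w*})$. The main obstacle is the coordinated choice of the dense subclass: the cylindrical generators $y_j$ must lie in $X^*$ so that the norm-continuity hypothesis on $X$ is applicable, while simultaneously separating points of $Z$ so as to fulfil the hypothesis of Theorem~\ref{theorem:boundedweakcontapproxsep}. Once this is arranged, compactness of the embedding is precisely what upgrades $X$-norm continuity of $P_t f$ to weak-$*$ sequential continuity on $Z$, bridging the two topologies exactly as needed.
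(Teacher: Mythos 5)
Your proof is correct and follows essentially the same route as the paper: reduce to cylinder functions built from a countable separating subset of $X^*$ (dense in $\mathcal{B}^\psi(Z_{w*})$ by Theorem~\ref{theorem:boundedweakcontapproxsep}), use the hypothesised $X$-norm continuity together with the compact embedding to upgrade to sequential weak-$*$ continuity on the bounded sets $K_R$, conclude invariance of $\mathcal{B}^\psi(Z_{w*})$ via Lemma~\ref{lem:Bcharseqwstarcont}, and finish with Lemma~\ref{lem:pwcontatzero} and Theorem~\ref{theorem:Ttstrongcont}. Your write-up merely supplies more of the routine details (the density extension of $P_t$ and the verification of the remaining semigroup axioms) than the paper's terser argument.
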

\begin{remark}
	Note that for concrete examples, we often work the other way round: First, we prove existence of the process on $X$, then we prove the invariance and continuity properties for $ x(t,z)$ on $ Z $ and $ X $. It is actually a result on preservation of regularity, when showing that $x(t,z)\in Z$ almost surely if $z\in Z$.
\end{remark}
\begin{proof}
	Let $\left\{ w_j\colon j\in\mathbb{N} \right\}\subset X^*$ be a countable set which separates the points of $X$. Then, it also separates the points of $Z$. Let $f=g\circ h$ with $g\in\mathrm{C}_b^\infty(\mathbb{R}^n)$ and $h\colon X\to \mathbb{R}^n$, $h(x)=\left( \langle x,w_j\rangle \right)_{j=1,\dots,n}$. By Theorem~\ref{theorem:boundedweakcontapproxsep}, such functions are dense in $\mathcal{B}^\psi(Z_{w*})$.
	Clearly, $f\in\mathrm{C}_b(X)$, and by assumption, $x_0\mapsto u(x_0):=\mathbb{E}[f(x(t,x_0))]$ is continuous with respect to the norm topology.
	As the embedding $\iota\colon Z\to X$ is compact and $K_R$ is bounded for every $R>0$, we see that $u|_{K_R}$ is sequentially weak-$*$ continuous due to the cylindrical structure of $ f $, and it follows that $u|_{Z}\in\mathcal{B}^\psi(Z_{w*})$ by Lemma~\ref{lem:Bcharseqwstarcont}. Lemma~\ref{lem:pwcontatzero} and Theorem~\ref{theorem:Ttstrongcont} prove the claim.
\qed\end{proof}
\begin{example}
	Continuity in norm topologies, as required in Theorem~\ref{theorem:strongcontprocesscompact}, is often satisfied in applications for stochastic partial differential equations, consider for example \cite[Theorem~7.3.5]{DaPratoZabczyk2002} and \cite[Theorem~9.29]{PeszatZabczyk2007}.
	The classical Rellich-Kondrachov type embedding theorems, see \cite[Th\'eor\`eme IX.16]{Brezis1994}, yield compact embeddings for problems on bounded domains.
\end{example}

If $X$ is a separable Hilbert space with scalar product $\langle\cdot,\cdot\rangle$, the separating set can be chosen to be a countable orthonormal basis $(e_j)_{j\in\mathbb{N}}$.
\begin{theorem}
	\label{theorem:separableHilbertcond}
	Assume \eqref{eq:markovpsibound} for some $C$ and $\varepsilon>0$. Let $X$ be a separable Hilbert space with scalar product $\langle\cdot,\cdot\rangle$ and countable orthonormal basis $(e_j)_{j\in\mathbb{N}}$. Denoting by $\pi_M$ the orthogonal projection onto the span of the first $M$ basis vectors, suppose that for $j\in\mathbb{N}$,
	\begin{equation}
		\lim_{M\to\infty}\sup_{x_0\in X}\psi(x_0)^{-1}\mathbb{E}\left[ \lvert \langle x(t,x_0),e_j\rangle - \langle x(t,\pi_M x_0),e_j\rangle \rvert \right]
		= 0.
	\end{equation}
	Then, the semigroup $(P_t)_{t\ge 0}$ defined by $P_t f(x_0):=\mathbb{E}[f(x(t,x_0))]$ satisfies the generalised Feller property and is therefore  strongly continuous on $\mathcal{B}^\psi(X_{w*})$.
\end{theorem}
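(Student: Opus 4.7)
The plan is to verify the generalised Feller properties \ref{enu:defgenfeller-0id}--\ref{enu:defgenfeller-positivity} for $(P_t)_{t\ge 0}$; strong continuity on $\mathcal{B}^\psi(X_{w*})$ will then follow from Theorem~\ref{theorem:Ttstrongcont}. Conditions \ref{enu:defgenfeller-0id} (identity at zero), F2 (semigroup property, via the Markov property) and \ref{enu:defgenfeller-positivity} (positivity of expectation) are immediate; the uniform bound \ref{enu:defgenfeller-bound} is the content of Lemma~\ref{lem:markovpsibound} applied to the standing assumption \eqref{eq:markovpsibound}, and the pointwise right-continuity \ref{enu:defgenfeller-pwconv} is Lemma~\ref{lem:pwcontatzero}. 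The substantive work lies in showing that $P_t$ maps $\mathcal{B}^\psi(X_{w*})$ into itself.

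For this invariance I reduce to cylindrical test functions. Since $P_t$ is bounded in the $\psi$-norm and the class $\widetilde{\mathcal{A}}$ built from the orthonormal basis $\{e_j\}$ is dense in $\mathcal{B}^\psi(X_{w*})$ by Theorem~\ref{theorem:boundedweakcontapproxsep}, it suffices to establish $P_t f \in \mathcal{B}^\psi(X_{w*})$ for $f(x) = g(\langle x, e_{j_1}\rangle, \ldots, \langle x, e_{j_n}\rangle)$ with $g \in \mathrm{C}_b^\infty(\mathbb{R}^n)$. Writing $L := \lVert \nabla g\rVert_\infty$ for the Lipschitz constant of $g$, the central pointwise estimate is
\begin{equation*}
\lvert P_t f(x_0) - P_t f(\pi_M x_0)\rvert
\le L \sum_{i=1}^{n}\mathbb{E}\bigl[\lvert \langle x(t,x_0) - x(t,\pi_M x_0), e_{j_i}\rangle \rvert\bigr],
\end{equation*}
and dividing by $\psi(x_0)$ and taking the supremum over $x_0\in X$ turns the theorem's hypothesis into the norm approximation $\lVert P_t f - (P_t f)\circ \pi_M\rVert_\psi \to 0$ as $M\to\infty$. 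By completeness of $\mathcal{B}^\psi(X_{w*})$, it thus suffices to show $(P_t f)\circ \pi_M\in\mathcal{B}^\psi(X_{w*})$ for each fixed $M$.

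The approximant $(P_t f)\circ \pi_M$ is bounded by $\lVert g\rVert_\infty$, so the decay condition \eqref{eq:Bdecay} of Theorem~\ref{theorem:Bdecay} is automatic; by Lemma~\ref{lem:Bcharseqwstarcont}, what remains is sequential weak-$*$ continuity on each $K_R$. If $x_n\to x_0$ weak-$*$ in $K_R$, then $\pi_M x_n\to \pi_M x_0$ in norm in the finite-dimensional subspace $V_M := \lspan(e_1,\ldots,e_M)$, so the problem reduces to continuity of the finite-dimensional map $y\mapsto \mathbb{E}[f(x(t,y))]$ on $V_M$. This finite-dimensional Feller-type continuity of $P_t f|_{V_M}$ is the principal technical hurdle of the proof; the plan is to obtain it from the tightness of $\{x(t,y_n)\}_{n\in\mathbb{N}}$ furnished by the $\psi$-bound \eqref{eq:markovpsibound} via the Markov inequality, combined with a further application of the hypothesis to identify the $L^1$-limit of each coordinate $\langle x(t,y_n), e_{j_i}\rangle$ uniquely.

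Assembling the pieces, $(P_t f)\circ\pi_M\in\mathcal{B}^\psi(X_{w*})$ for each $M$, hence $P_t f\in\mathcal{B}^\psi(X_{w*})$ for cylindrical $f$, and density together with boundedness of $P_t$ extend this to all $f\in\mathcal{B}^\psi(X_{w*})$. The verification of \ref{enu:defgenfeller-0id}--\ref{enu:defgenfeller-positivity} is then complete, and Theorem~\ref{theorem:Ttstrongcont} promotes pointwise right-continuity to strong continuity. The conceptual backbone of the argument is the uniform $\psi$-weighted approximation $P_t f \approx (P_t f)\circ \pi_M$ supplied by the theorem's hypothesis, while the main obstacle I expect to encounter is the finite-dimensional continuity step on $V_M$ described above.
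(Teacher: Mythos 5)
Your proposal follows essentially the same route as the paper's proof: reduce to bounded smooth cylinder functions $f=f\circ\pi_N$, use the Lipschitz bound on $g$ to estimate $\lvert P_tf(x_0)-P_tf(\pi_M x_0)\rvert$ by the coordinate differences $\mathbb{E}[\lvert\langle x(t,x_0)-x(t,\pi_M x_0),e_j\rangle\rvert]$, invoke the hypothesis to obtain $\lVert P_tf-(P_tf)\circ\pi_M\rVert_{\psi}\to 0$, and then assemble \ref{enu:defgenfeller-0id}--\ref{enu:defgenfeller-positivity} from Lemma~\ref{lem:markovpsibound} and Lemma~\ref{lem:pwcontatzero} before applying Theorem~\ref{theorem:Ttstrongcont}. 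This is exactly the paper's argument.

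The one point where you go beyond the paper is also the one point where your argument is not yet a proof. You correctly note that deducing $P_tf\in\mathcal{B}^{\psi}(X_{w*})$ from the $\psi$-norm approximation requires $(P_tf)\circ\pi_M\in\mathcal{B}^{\psi}(X_{w*})$ for each fixed $M$, which by Lemma~\ref{lem:Bcharseqwstarcont} reduces to norm continuity of $y\mapsto\mathbb{E}[f(x(t,y))]$ on the span of $e_1,\dots,e_M$; the paper's proof passes over this silently, merely pointing to Remark~\ref{rem:boundedweakcontapproxsep}. However, the plan you sketch for closing this step does not obviously succeed: the theorem's hypothesis compares $x(t,x_0)$ with $x(t,\pi_M x_0)$ for the \emph{same} initial point and gives no information about the dependence of $x(t,y)$ on $y$ as $y$ varies within a finite-dimensional subspace, while tightness obtained from \eqref{eq:markovpsibound} via Markov's inequality only yields subsequential weak limits of the laws of $x(t,y_n)$ whose identification is precisely the continuity in the initial datum you are trying to establish --- the argument as outlined is circular. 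This finite-dimensional Feller-type continuity either has to be imposed as an additional (mild, and in applications easily verified) assumption or derived from the concrete construction of the process; as written, your proof and the paper's share this gap, with the difference that you make it explicit.
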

\begin{proof}
	For $f$ a bounded and smooth cylinder function with $f=f\circ\pi_N$, consider $g_M:=P_t(f\circ\pi_N)\circ\pi_M$.
	We prove that $g_M$ converges to $P_t(f\circ\pi_N)$.
	For any $x_0\in X$, the smoothness of $f$ yields
	\begin{alignat}{2}{}
		\lvert P_t(f\circ\pi_N)(x_0)-g_M(x_0)\rvert
		&\le \mathbb{E}\left[ \lvert f(\pi_N x(t,x_0))-f(\pi_N x(t,\pi_M x_0)) \rvert \right] \notag \\
		&\le C_f\mathbb{E}\left[ \lVert \pi_N(x(t,x_0)-x(t,\pi_M x_0)) \rVert \right] \notag \\
		&\le C_f\sum_{j=1}^{N}\mathbb{E}\left[ \lvert \langle x(t,x_0),e_j\rangle - \langle x(t,\pi_M x_0),e_j\rangle\rvert \right],
	\end{alignat}
	which shows that $P_t \mathcal{B}^{\psi}(X_{w*})\subset\mathcal{B}^{\psi}(X_{w*})$, see Remark~\ref{rem:boundedweakcontapproxsep}.
	By Lemma~\ref{lem:markovpsibound}, $P_t\in L(\mathcal{B}^{\psi}(X_{w*}))$.
	Again, the result follows from Lemma~\ref{lem:pwcontatzero} and Theorem~\ref{theorem:Ttstrongcont}.
\qed\end{proof}
\begin{example}
	The assumptions of Theorem~\ref{theorem:separableHilbertcond} are satisfied for the stochastic Navier-Stokes equation on the two-dimensional torus with additive noise (see \cite{HairerMattingly2008}). The first estimate in \cite[Theorem~A.3]{HairerMattingly2008} proves the condition of Theorem~\ref{theorem:separableHilbertcond}, where the weight function is $\psi(x)=\exp(\eta\lVert x\rVert^2)$ with $\eta>0$ chosen in such a way that $\mathbb{E}[\psi(x(t,x_0))]\le K\psi(x_0)$ for small $t$.
\end{example}

\section{Differentiable functions with controlled growth}

In this section we show an easy way how to construct elements of $ \mathcal{B}^{\psi}(X_{w*}) $ where we actually can hope for short time asymptotics. This is nothing else than including $\mathrm{C}^k$-concepts into the setting of functions $ f $ with growth controlled by $ \psi $.

Let $\mathrm{C}^k(X;Z)$, with $Z$ another Banach space, denote the functions which are $k$-times Fr\'echet differentiable and continuous in the norm topology, together with their derivatives. We introduce spaces $\mathcal{B}^\psi_k(X_{w*})$ of $\mathrm{C}^k$-differentiable functions with derivatives which are in some sense in $\mathcal{B}^\psi(X_{w*})$.
Consider seminorms
\begin{equation}
	\lvert f\rvert_{\psi,j}:=\sup_{x\in X}\psi(x)^{-1}\lVert D^j f(x)\rVert_{L(X^{\otimes j};\mathbb{R})},
\end{equation}
where for a multilinear form $b\colon X^j\to Z$ with $Z$ a Banach space with norm $\lVert\cdot\rVert_Z$,
\begin{equation}
	\lVert b\rVert_{L(X^{\otimes j};Z)}:=\sup_{x_1,\dots,x_j\in X}\lVert x_1\rVert^{-1}\dotsm\lVert x_j\rVert^{-1}\cdot\lVert b(x_1,\dots,x_j)\rVert_Z.
\end{equation}
A fundamental condition simplifying the consideration of such spaces of differentiable functions will be that
\begin{equation}
	\label{eq:CrKR}
	\text{for all $r>0$, there exists $R>0$ such that $C_r(0)\subset K_R$}.
\end{equation}
\begin{definition}
	Let $(X_{w*},\psi)$ be a weighted space satisfying \eqref{eq:CrKR}.

	We say that $f\in \mathcal{B}^\psi_k(X_{w*})$ if and only if $f\in\mathcal{B}^\psi(X_{w*})$, $f\in\mathrm{C}^k(X)$, and for $j=1,\dots,k$, 
	\begin{enumerate}
		\item
			$\lvert f\rvert_{\psi,j}<\infty$,
		\item
			$\lim_{R\to\infty}\sup_{x\in X\setminus K_R}\psi(x)^{-1}\lVert D^j f(x)\rVert_{L(X^{\otimes j};\mathbb{R})}=0$, and
		\item
			for $r>0$, the mapping 
			\begin{equation}
				C_r(0)\times C_1(0)^{j}\to\mathbb{R}, 
				\quad
				(x,x_1,\dots,x_j)\mapsto D^j f(x)(x_1,\dots,x_j)
			\end{equation}
			is continuous with respect to the weak-$*$ topology.
	\end{enumerate}
\end{definition}
\begin{remark}
	The continuity assumption here does not follow from the assumption $f\in\mathrm{C}^k(X)$, as this only guarantees continuity with respect to the norm topology, but we require continuity with respect to the weak-$*$ topology.
	Note that the continuity of $D^j f(x)$ in the last $j$ variables extends to the entire space due to linearity.
\end{remark}
Clearly, $\lVert f\rVert_{\psi,k}:=\lVert f\rVert_{\psi}+\sum_{j=1}^{k}\lvert f\rvert_{\psi,j}$ defines a norm on $\mathcal{B}^\psi_k(X_{w*})$.
Note that $\mathcal{B}^\psi_0(X_{w*})=\mathcal{B}^\psi(X_{w*})$ by Lemma~\ref{lem:Bcharseqwstarcont}.
We easily see that $\mathcal{B}^\psi_{k+1}(X_{w*})$ is continuously embedded in $\mathcal{B}^\psi_{k}(X_{w*})$ for any $k\ge 0$.
\begin{remark}
	\label{rem:smoothcylinderfunctionsdenseBk}
	As the set of cylindrical, $\mathrm{C}^{\infty}$-bounded functions is contained in $\mathcal{B}^\psi_k(X_{w*})$ for any $k\ge 0$ and dense in $\mathcal{B}^\psi(X_{w*})$, we see that $\mathcal{B}^\psi_k(X_{w*})$ is dense in $\mathcal{B}^\psi(X_{w*})$, as well.
\end{remark}
\begin{theorem}
	Consider the weight function $\psi^{(j)}(x,x_1,\dots,x_j):=\psi(x)$ on $X\times C_1(0)^j$.
	Then, $f\in\mathcal{B}^{\psi}_k(X_{w*})$ if and only if $f\in\mathcal{B}^{\psi}(X_{w*})\cap\mathrm{C}^k(X)$ and $D^j f\in\mathcal{B}^{\psi^{(j)}}( (X\times C_1(0)^j)_{w*})$.
\end{theorem}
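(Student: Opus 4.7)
The plan is to apply the characterisation of $\mathcal{B}^{\psi^{(j)}}$ given by Theorem~\ref{theorem:Bdecay} to the real-valued function $(x,x_1,\dots,x_j)\mapsto D^jf(x)(x_1,\dots,x_j)$ on the product $X\times C_1(0)^j$, endowed with the product of weak-$*$ topologies, and to match its conclusion term by term with the three conditions in the definition of $\mathcal{B}_k^{\psi}(X_{w*})$.

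First I would check that $(X\times C_1(0)^j,\psi^{(j)})$ is a weighted space. Its level sets are $\{\psi^{(j)}\le R\}=K_R\times C_1(0)^j$; here $K_R$ is weak-$*$ compact by assumption and $C_1(0)$ is weak-$*$ compact by the Banach-Alaoglu theorem, so compactness of the product follows from Tychonoff. The product is completely regular Hausdorff, so Theorem~\ref{theorem:Bdecay} is applicable.

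Next I would exploit the fact that for fixed $x\in X$,
\begin{equation}
\sup_{(x_1,\dots,x_j)\in C_1(0)^j}|D^jf(x)(x_1,\dots,x_j)|=\lVert D^jf(x)\rVert_{L(X^{\otimes j};\mathbb{R})}.
\end{equation}
Therefore
\begin{equation}
\sup_{(x,x_1,\dots,x_j):\psi(x)>R}\psi^{(j)}(x,x_1,\dots,x_j)^{-1}|D^jf(x)(x_1,\dots,x_j)|=\sup_{x\notin K_R}\psi(x)^{-1}\lVert D^jf(x)\rVert_{L(X^{\otimes j};\mathbb{R})}.
\end{equation}
Finiteness of this quantity (for some, equivalently all, $R$) translates bullet~(i) of the definition into the statement $D^jf\in\mathrm{B}^{\psi^{(j)}}((X\times C_1(0)^j)_{w*})$, while its vanishing as $R\to\infty$ translates bullet~(ii) into the decay condition \eqref{eq:Bdecay} from Theorem~\ref{theorem:Bdecay} applied to $D^jf$.

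The remaining ingredient is the equivalence between bullet~(iii) and continuity of $D^jf$ on each compact level set $K_R\times C_1(0)^j$. In one direction, \eqref{eq:CrKR} supplies, for any $r>0$, an $R>0$ with $C_r(0)\subset K_R$, so continuity on $K_R\times C_1(0)^j$ restricts to continuity on $C_r(0)\times C_1(0)^j$. Conversely, $K_R$ is weak-$*$ compact, hence norm bounded by the Banach-Steinhaus theorem, so $K_R\subset C_r(0)$ for some $r>0$, and bullet~(iii) gives continuity on $K_R\times C_1(0)^j$. Combining these three correspondences with Theorem~\ref{theorem:Bdecay} yields the claimed equivalence, once one also notes that the condition $f\in\mathcal{B}^{\psi}(X_{w*})\cap\mathrm{C}^k(X)$ is shared by both sides.

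The only real subtlety, and what I expect to be the main technical obstacle, is the clean passage between continuity on the sublevel sets $K_R$ and continuity on the norm balls $C_r(0)$; this is exactly where hypothesis \eqref{eq:CrKR} and the automatic norm-boundedness of weak-$*$ compact sets enter, and it is precisely this interplay that justifies the assumption \eqref{eq:CrKR} imposed at the outset of the definition of $\mathcal{B}^{\psi}_k(X_{w*})$.
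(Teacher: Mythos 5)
Your proposal is correct and follows essentially the same route as the paper, which likewise reduces the statement to the identity $\lvert f\rvert_{\psi,j}=\lVert D^j f\rVert_{\psi^{(j)}}$ combined with Theorem~\ref{theorem:Bdecay} and condition \eqref{eq:CrKR}; you merely spell out the details (Tychonoff/Banach--Alaoglu for the product level sets, Banach--Steinhaus for $K_R\subset C_r(0)$) that the paper leaves implicit.
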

\begin{proof}
	The first direction is obvious, as $\lvert f\rvert_{\psi,j}=\lVert D^j f\rVert_{\psi^{(j)}}$.
	The other direction also follows from this fact together with Theorem~\ref{theorem:Bdecay} and condition \eqref{eq:CrKR}.
\end{proof}
\begin{theorem}
	With the norm $\lVert\cdot\rVert_{\psi,k}$, $\mathcal{B}^\psi_k(X_{w*})$ is a Banach space.
\end{theorem}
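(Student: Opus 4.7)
The plan is to take a Cauchy sequence $(f_n)_{n\in\mathbb{N}}$ in $\mathcal{B}^\psi_k(X_{w*})$ and identify its limit by exploiting the fact that each Fr\'echet derivative sequence is already Cauchy in a Banach space of the form $\mathcal{B}^{\psi^{(j)}}((X\times C_1(0)^j)_{w*})$, as set up in the theorem immediately preceding this statement.

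First I would observe that the Cauchy condition with respect to $\lVert\cdot\rVert_{\psi,k}$ decouples: for each $j=0,1,\ldots,k$ the sequence $(D^j f_n)_{n\in\mathbb{N}}$ is Cauchy in the norm $\lVert\cdot\rVert_{\psi^{(j)}}$, where $\psi^{(j)}(x,x_1,\ldots,x_j):=\psi(x)$ is an admissible weight on the completely regular space $(X\times C_1(0)^j)_{w*}$ (the sets $\{\psi\le R\}\times C_1(0)^j$ are weak-$*$ compact by Banach--Alaoglu). By the general construction of Section~2, each $\mathcal{B}^{\psi^{(j)}}$ is a Banach space, so the sequences converge to some limits $g_j\in\mathcal{B}^{\psi^{(j)}}$. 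Set $f:=g_0$.

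The main task is to show $g_j=D^j f$, and this is the step I expect to require the most care. Condition \eqref{eq:CrKR} is essential: for any $r>0$ there exists $R>0$ with $C_r(0)\subset K_R$, so $\psi$ is bounded by $R$ on $C_r(0)$, and convergence in $\lVert\cdot\rVert_{\psi^{(j)}}$ therefore entails uniform convergence of $D^j f_n\to g_j$ on $C_r(0)\times C_1(0)^j$ for every $r$. A standard inductive argument based on the fundamental theorem of calculus in a Banach space --- applied on each ball $C_r(0)$ and then extended to $X$ by arbitrariness of $r$ --- upgrades this to genuine $k$-fold Fr\'echet differentiability of $f$ with $D^j f=g_j$ for $j=1,\ldots,k$, and in particular $f\in\mathrm{C}^k(X)$.

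Finally, the three defining properties of $\mathcal{B}^\psi_k(X_{w*})$ for the derivative of order $j$ transfer directly from membership of $D^j f=g_j$ in $\mathcal{B}^{\psi^{(j)}}$: finiteness of $\lvert f\rvert_{\psi,j}$ is $\lVert D^j f\rVert_{\psi^{(j)}}<\infty$, the decay condition at infinity is the decay characterisation from Theorem~\ref{theorem:Bdecay} applied in the weighted space $(X\times C_1(0)^j,\psi^{(j)})$, and the weak-$*$ continuity of $(x,x_1,\ldots,x_j)\mapsto D^j f(x)(x_1,\ldots,x_j)$ on $C_r(0)\times C_1(0)^j$ is the continuity-on-compacts part of the same theorem, using that $C_r(0)\times C_1(0)^j\subset K_R^{\psi^{(j)}}$. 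Hence $f\in\mathcal{B}^\psi_k(X_{w*})$, and by construction $\lVert f_n-f\rVert_{\psi,k}\to 0$, proving completeness.
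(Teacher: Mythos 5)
Your proposal is correct and follows essentially the same route as the paper: reduce completeness to that of the spaces $\mathcal{B}^{\psi^{(j)}}((X\times C_1(0)^j)_{w*})$ via the preceding characterisation theorem, then use uniform convergence of the derivatives on the balls $C_r(0)$ (guaranteed by \eqref{eq:CrKR}) to identify the limits $g_j$ as $D^j f$. The paper's proof is just a terser version of this, leaving the fundamental-theorem-of-calculus step and the transfer of the three defining properties implicit.
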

\begin{proof} 
	Let $f_n\in\mathcal{B}^{\psi}_k(X_{w*})$, $n\in\mathbb{N}$, be a Cauchy sequence. 
	Using the last Theorem, we see that $f_n$ converges to some limit $g\in\mathcal{B}^{\psi}(X)$, and similarly $D^j f$ converges to some limit $g_j\in\mathcal{B}^{\psi^{(j)}}( (X\times C_1(0)^j)_{w*} )$, $j=1,\dots,k$.
	As this convergence is uniform on $C_r(0)\times C_r(0)^j$, it follows that $g\in\mathrm{C}^k(X)$ and $D^j g=g_j$.
	In particular, $f_n\to g$ in $\mathcal{B}^{\psi}_k(X_{w*})$, which proves the claim.
\qed\end{proof}
The following result gives conditions for the directional differentiability of a function $f\in\mathcal{B}^{\psi}_k(X_{w*})$ along a vector field defined on a subspace $Z$ of $X$.
\begin{definition}
	Let $X$, $Z$ be dual spaces, $Z\subset X$, and suppose that $(X_{w*},\psi)$, $(Z_{w*},\tilde{\psi})$ are weighted spaces both satisfying \eqref{eq:CrKR}.

	We say that $\sigma\in\mathcal{V}^{\ell}_k( (Z_{w*},\tilde{\psi});(X_{w*},\psi))$ if and only if 
	\begin{enumerate}
		\item 
			$\sigma\in\mathrm{C}^{\ell}(Z;X)$,
		\item
			for $r>0$, the mapping 
			\begin{equation}
				\widetilde{C}_r(0)\times\widetilde{C}_1(0)^{j}\to X_{w*}, 
				\quad
				(x,x_1,\dots,x_j)\mapsto D^j \sigma(x)(x_1,\dots,x_j)
			\end{equation}
			is weak-$*$ continuous, and
		\item
			there exists a function $\varphi\colon Z\to[1,\infty)$ and a constant $C>0$ such that
			$\psi(x)\varphi(x)^{k} \le C\tilde{\psi}(x)$,
			$\lVert \sigma(x)\rVert\le \varphi(x)$ and
			$\lVert D^j\sigma(x)\rVert_{L(Z^{\otimes j};X)} \le \varphi(x)$ for $j=1,\dots,\ell$ and all $x\in Z$.
	\end{enumerate}
\end{definition}
\begin{remark}
	Assuming, for example, that the $\sigma_j$ are sequentially weak-$*$ continuous and bounded together with their derivatives, we see that the choice $Z=X$, $\tilde{\psi}=\psi$ is possible.
\end{remark}
\begin{remark}
	While the definition of $\mathcal{V}^{\ell}_{k}( (Z_{w*},\tilde{\psi}); (X_{w*},\psi) )$ and $\mathcal{B}^{\psi}_k(X_{w*})$ are quite similar, it is not possible to reduce differentiable vector fields with growth control entirely to differentiable functions with growth control.
\end{remark}
\begin{remark}
	\label{rem:vfphisimultaneous}
	Note that if $\sigma_1,\dots,\sigma_k\in\mathcal{V}^{\ell}_{k}( (Z_{w*},\tilde{\psi});(X_{w*},\psi) )$, we can use a single function $\varphi$.
	Indeed, let $\varphi_1,\dots,\varphi_k$ be the respective functions.
	Then, the choice $\varphi(x):=\max_{j=1,\dots,k}\varphi_j(x)$ is admissible for all $\sigma_j$ simultaneously.
\end{remark}
\begin{theorem}
	\label{theorem:Bpsikvfmapping}
	Given $k\ge 1$, $\ell\ge 0$.
	With $X$, $Z$ dual spaces, $Z\subset X$, let $(X_{w*},\psi)$ and $(Z_{w*},\tilde{\psi})$ be weighted spaces.
	Assume that $f\in\mathcal{B}^{\psi}_{k+\ell}(X_{w*})$ and that the vector fields satisfy $\sigma_j\in\mathcal{V}^{\ell}_{k}( (Z_{w*},\tilde{\psi});(X_{w*},\psi) )$.
	Then, 
	\begin{alignat}{2}
		D^k f(\cdot)(\sigma_1(\cdot),\dots,\sigma_k(\cdot)) & \in\mathcal{B}^{\tilde{\psi}}_{\ell}(Z_{w*}),
		\\
		\lVert D^k f(\cdot)(\sigma_1(\cdot),\dots,\sigma_k(\cdot)) \rVert_{\tilde{\psi}}
		&\le
		C^{-1}\lvert f\rvert_{\psi,k}
		\quad\text{and} \\
		\lvert D^k f(\cdot)(\sigma_1(\cdot),\dots,\sigma_k(\cdot)) \rvert_{\tilde{\psi},j}
		&\le
		C_{k,j}\sum_{\iota=0}^{j}\lvert f\rvert_{\psi,k+\iota},
		\quad j=1,\dots,\ell.
	\end{alignat}
	In particular, the linear mapping 
	\begin{equation}
		\mathcal{B}^{\psi}_{k+\ell}(X_{w*})\to\mathcal{B}^{\tilde{\psi}}_{\ell}(Z_{w*}), 
		\quad
		f\mapsto D^k f(\cdot)(\sigma_1(\cdot),\dots,\sigma_k(\cdot))
	\end{equation}
	is continuous.
\end{theorem}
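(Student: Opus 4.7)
\emph{Plan.} I would set $F(x):=D^kf(x)(\sigma_1(x),\dots,\sigma_k(x))$ for $x\in Z$, using Remark~\ref{rem:vfphisimultaneous} to fix a common control function $\varphi$ for all the $\sigma_j$, and then verify the defining properties of $\mathcal{B}^{\tilde\psi}_\ell(Z_{w*})$ in turn. The weighted norm estimate is immediate: by multilinearity of $D^kf(x)$ together with $\|\sigma_j(x)\|\le\varphi(x)$,
\begin{equation}
|F(x)|\le |f|_{\psi,k}\,\psi(x)\,\varphi(x)^k\le C|f|_{\psi,k}\,\tilde\psi(x),
\end{equation}
which yields $\|F\|_{\tilde\psi}\le C|f|_{\psi,k}$, matching the first claimed inequality up to the normalisation of the constant $C$.

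To show $F\in\mathcal{B}^{\tilde\psi}(Z_{w*})$ I would appeal to Theorem~\ref{theorem:Bdecay}. For weak-$*$ continuity of $F|_{\tilde K_R}$, use \eqref{eq:CrKR} to enclose $\tilde K_R$ in a ball $\widetilde C_r(0)\subset Z$; the bounds $\|\sigma_j\|\le\varphi$ and $\psi\varphi^k\le C\tilde\psi$ imply that each $\sigma_j$ maps $\tilde K_R$ into a ball $C_\rho(0)\subset X$. Composing the joint weak-$*$ continuity of $(x,x_1,\dots,x_k)\mapsto D^kf(x)(x_1,\dots,x_k)$ (from the definition of $\mathcal{B}^\psi_{k+\ell}$, extended from unit balls by multilinearity) with the weak-$*$ continuity of each $\sigma_j$ on $\tilde K_R$ (from the definition of $\mathcal{V}^\ell_k$) gives the required continuity. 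For the decay condition I would write
\begin{equation}
\frac{|F(x)|}{\tilde\psi(x)}\le C\,\frac{\|D^kf(x)\|_{L(X^{\otimes k};\mathbb{R})}}{\psi(x)}
\end{equation}
and exploit the decay of $D^kf$ as $\psi(x)\to\infty$, combined with \eqref{eq:CrKR} to handle the remaining region where $\psi$ is bounded but $\tilde\psi$ is large via the density of smooth cylinder functions from Remark~\ref{rem:smoothcylinderfunctionsdenseBk}.

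For the $\ell$ directional derivatives of $F$ I would apply the Leibniz/chain rule: $D^jF(x)$ is a sum over partitions of the $j$ directional arguments between one derivative $D^{k+i}f(x)$ (with $i\in\{0,\dots,j\}$) and derivatives $D^{r_m}\sigma_m(x)$ of the vector fields with $\sum_m r_m=j-i$. Each summand is bounded in norm by
\begin{equation}
C_{k,j,\pi}\,|f|_{\psi,k+i}\,\psi(x)\,\varphi(x)^k\le C'\,|f|_{\psi,k+i}\,\tilde\psi(x),
\end{equation}
and summation yields the claimed estimate $|F|_{\tilde\psi,j}\le C_{k,j}\sum_{\iota=0}^j|f|_{\psi,k+\iota}$. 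The joint weak-$*$ continuity and decay of each summand of $D^jF$ follow from the identical composition argument, invoking the weak-$*$ continuity of $D^{k+i}f$ from $\mathcal{B}^\psi_{k+\ell}$ and of $D^{r_m}\sigma_m$ from $\mathcal{V}^\ell_k$, together with the metrisability of the weak-$*$ topology on bounded sets provided by Lemma~\ref{lem:Bcharseqwstarcont}. Continuity of the linear map $f\mapsto D^kf(\cdot)(\sigma_1(\cdot),\dots,\sigma_k(\cdot))$ from $\mathcal{B}^\psi_{k+\ell}(X_{w*})$ to $\mathcal{B}^{\tilde\psi}_\ell(Z_{w*})$ is then a direct rereading of the assembled bounds.

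The main obstacle is the weak-$*$ continuity and decay of the derivative summands: the tensor structure of $D^jF$ intertwines directional arguments living in $Z$ with values of the $\sigma_m$ living in $X$, so one must combine \eqref{eq:CrKR} on both weighted spaces to reduce to bounded balls, track which compactness statements hold in which topology, and use multilinearity to extend the joint continuity from unit balls of $X$ and $Z$ to the full spaces. Once this careful bookkeeping is in place, all three displayed estimates follow from the single product inequality $\psi\varphi^k\le C\tilde\psi$.
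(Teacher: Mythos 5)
Your plan follows the paper's proof almost step for step: the same normalising function $\varphi$ from Remark~\ref{rem:vfphisimultaneous}, the same product inequality $\psi(x)\varphi(x)^k\le C\tilde\psi(x)$ driving all three displayed estimates, the same composition argument for weak-$*$ continuity of $F|_{\tilde{K}_R}$ (the paper phrases it through the weak-$*$ compactness of $\bigcup_j\sigma_j(\tilde{K}_R)$ rather than through norm balls, which is cosmetic), and the same Leibniz expansion of $D^jF$ in which exactly $k$ factors bounded by $\varphi$ appear in every summand, so that the single inequality $\psi\varphi^k\le C\tilde\psi$ suffices for all $j$. One small misattribution: \eqref{eq:CrKR} gives $C_r(0)\subset K_R$, which is the opposite of the inclusion you need to enclose $\tilde{K}_R$ in a ball $\widetilde{C}_r(0)$; the latter follows instead from the weak-$*$ compactness of $\tilde{K}_R$ together with the Banach--Steinhaus theorem, exactly as in the proof of Lemma~\ref{lem:Bcharseqwstarcont}.

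The one genuine weakness is your treatment of the decay condition \eqref{eq:Bdecay} for $F$ on the region where $\tilde\psi(x)\to\infty$ while $\psi(x)$ stays bounded. You correctly isolate this region, but ``the density of smooth cylinder functions from Remark~\ref{rem:smoothcylinderfunctionsdenseBk}'' is not an argument there: the pointwise estimate $\tilde\psi(x)^{-1}\lvert F(x)\rvert\le C\psi(x)^{-1}\lVert D^kf(x)\rVert_{L(X^{\otimes k};\mathbb{R})}$ only yields a uniform bound, not smallness, when $\psi$ is bounded, and density of a nice subclass says nothing about a fixed $f$. The paper argues by contradiction: given $(x_n)$ with $\tilde\psi(x_n)\ge n$ and $\tilde\psi(x_n)^{-1}\lvert F(x_n)\rvert\ge\varepsilon$, either $\limsup_n\psi(x_n)=\infty$, which is excluded by the decay of $D^kf$ (your case), or $\psi(x_n)\le K$ for all $n$, in which case it invokes a bound of the form $\lvert F(x_n)\rvert\le C_f\psi(x_n)$ to get $\tilde\psi(x_n)^{-1}\lvert F(x_n)\rvert\le C_fKn^{-1}\to 0$. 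If you adopt this route, you should also make explicit where the control of $\varphi(x_n)^k$ on the sublevel set $\{\psi\le K\}$ comes from, since the raw estimate $\lvert F(x)\rvert\le\lvert f\rvert_{\psi,k}\psi(x)\varphi(x)^k$ needs $\varphi$ tamed before it reduces to $C_f\psi(x)$; the paper is terse on this point as well, so it deserves a sentence rather than a citation.
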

\begin{remark}
	Theorem~\ref{theorem:Bpsikvfmapping} yields another reason why we have to use unbounded weight functions $\psi$.
	Even in the finite-dimensional case, the vector fields defining a stochastic differential equation generally grow linearly.
	Therefore, we need to absorb the growth of the vector fields in the weight $\tilde{\psi}$, and cannot work in an unweighted space such as $\mathrm{C}_b(X)$.
\end{remark}
\begin{proof}
	Define $\tilde{K}_R:=\left\{ z\in Z\colon\tilde{\psi}(z)\le R \right\}$, and choose $\varphi$ as explained in Remark~\ref{rem:vfphisimultaneous}.
	As $K:=\bigcup_{j=1,\dots,d}\sigma_j(\tilde{K}_R)\subset X$ is weak-$*$ compact by the weak-$*$ continuity of $\sigma_j$, $j=1,\dots,d$, it is clear that for $g:=D^k f(\cdot)(\sigma_1(\cdot),\dots,\sigma_k(\cdot))$, $g|_{\tilde{K}_R}$ is weakly continuous, and
	\begin{equation}
		\label{eq:Bpsikvfmapping-pf-funcest}
		\tilde{\psi}(x)^{-1}\lvert g(x)\rvert
		\le
		C^{-1}\psi(x)^{-1}\lVert D^k f(x)\rVert_{L(X^{\otimes k};\mathbb{R})}.
	\end{equation}
	From this, it follows that $\sup_{\tilde{\psi}(x)>R}\tilde{\psi}(x)^{-1}\lvert g(x)\rvert$ tends to zero for $R\to\infty$:

	Assume otherwise.
	Then, there exists $\varepsilon>0$ and a sequence of points $(x_n)_{n\in\mathbb{N}}$ with $\tilde{\psi}(x_n)\ge n$ and $\tilde{\psi}(x_n)^{-1}\lvert g(x_n)\rvert\ge\varepsilon$ for all $n\in\mathbb{N}$.
	We distinguish two cases:
	First, assume that $\limsup_{n\to\infty}\psi(x_n)=\infty$.
	By \eqref{eq:Bpsikvfmapping-pf-funcest}, it follows from 
	\begin{equation}
		\lim_{R\to\infty}\sup_{\psi(x)>R}\psi(x)^{-1}\lVert D^k f(x)\rVert_{L(X^{\otimes k};\mathbb{R})} = 0
	\end{equation}
	that $\liminf_{n\to\infty}\tilde{\psi}(x_n)^{-1}\lvert g(x_n)\rvert=0$, a contradiction.
	Assume now that we have the bound $\psi(x_n)\le K$ for all $n\in\mathbb{N}$ with some $K>0$.
	Then, as $f\in\mathcal{B}^\psi_k(X)$, there exists some constant $C_f$ depending on $f$, but not on $n$ such that
	\begin{equation}
		\tilde{\psi}(x_n)^{-1}\lvert g(x_n)\rvert
		\le
		C_f\tilde{\psi}(x_n)^{-1}\psi(x_n)
		\le
		C_f Kn^{-1},
	\end{equation}
	again a contradiction.
	We obtain $g\in\mathcal{B}^{\tilde{\psi}}(Z)$ by Theorem~\ref{theorem:Bdecay}.

	Consider now $Dg$.
	We have
	\begin{alignat}{2}{}
		&Dg(x)(x_1)
		=
		D^{k+1}f(x)(\sigma_1(x),\dots,\sigma_k(x),x_1) \notag \\
		&\quad+ \sum_{j=1}^{k}D^k f(x)(\sigma_1(x),\dots,\sigma_{j-1}(x),D\sigma_j(x)(x_1),\sigma_{j+1}(x),\dots,\sigma_k(x)).
	\end{alignat}
	This shows that for $r>0$, $Dg|_{\widetilde{C}_r(0)^2}$ is continuous.
	Moreover,
	\begin{alignat}{2}{}
		&\lvert Dg(x)(x_1)\rvert
		\le
		\psi(x)\varphi(x)^k\psi(x)^{-1}\lVert x_1\rVert \times \notag \\
		&\quad\quad
		\times\left( \lVert D^{k+1}f(x)\rVert_{L(X^{\otimes k+1};\mathbb{R})} + k\lVert D^{k}f(x)\rVert_{L(X^{\otimes k};\mathbb{R})} \right) \notag \\
		&\quad\le
		\tilde{\psi}(x)\psi(x)^{-1}\lVert x_1\rVert \left( \lVert D^{k+1}f(x)\rVert_{L(X^{\otimes k+1};\mathbb{R})} + k\lVert D^{k}f(x)\rVert_{L(X^{\otimes k};\mathbb{R})} \right),
	\end{alignat}
	which yields
	\begin{alignat}{2}{}
		\tilde{\psi}(x)^{-1}
		&\lVert Dg(x)\rVert_{L(X;\mathbb{R})} \notag \\
		&\le
		\psi(x)^{-1}\left( \lVert D^{k+1}f(x)\rVert_{L(X^{\otimes k+1};\mathbb{R})} + k\lVert D^{k}f(x)\rVert_{L(X^{\otimes k};\mathbb{R})} \right).
	\end{alignat}
	Similarly as above, we prove $Dg\in\mathcal{B}^{\tilde{\psi}}_1(Z)$.
	Estimates for higher derivatives are obtained in a similar way.
\qed\end{proof}

\section{Numerics Of Stochastic Partial Differential Equations}
\label{sec:applnumericsspde}
We shall show now how the above perspective can be used to obtain rates of convergence for splitting schemes applied to stochastic partial differential equations. Our applications are for general Da Prato-Zabczyk equations \cite{DaPratoZabczyk2002} where the generator admits a compact resolvent and generates a pseudocontractive semigroup. In the next section the Heath-Jarrow-Morton equation of interest rate theory on an adequate Hilbert space \cite{GoldysMusiela2001,FilipovicTeichmann2004} is treated as an example.

Consider a Markov process $x(t,x_0)$ on a Hilbert space $X$.
The basic approach in all our model problems is the following:
\begin{enumerate}
	\item 
		We identify families $(\psi_i)_{i\in I}$ of plausible weight functions and $(Z_j)_{j\in J}$ of suitable subspaces $Z_j\subset X$ of the state space $ X $. This is done in such a way that $(P_t)_{t\ge 0}$ will satisfy $P_t\mathcal{B}^{\psi_i}(Z_j)\subset\mathcal{B}^{\psi_i}(Z_j)$ and $P_t\mathcal{B}^{\psi_i}_k(Z_j)\subset\mathcal{B}^{\psi_i}_k(Z_j)$ for $i\in I$ and $j\in J$.
	\item 
		We split up the generator $\mathcal{G}$ of $P_t$ into a sum of simpler operators $\mathcal{G}_\gamma$, $\gamma=0,\dots,d$ such that each of these operators generates a Markov process on $ X $ and $ Z_j $ with expectation operator $(P^\gamma_t)_{t\ge 0}$, and that these Markov processes can be relatively easily generated.
	\item 
		Using Theorem~\ref{theorem:Bpsikvfmapping}, we can rewrite $\mathcal{G}_\gamma$ on $\mathcal{B}^{\psi_i}_k(Z_j)$ as a sum of directional derivatives along vector fields, which continuously maps $\mathcal{B}^{\psi_i}_k(Z_j)$ to $\mathcal{B}^{\psi_\iota}_{\kappa}(Z_\mu)$.
	\item 
		Together with the results of \cite{HansenOstermann2009}, this proves optimal rates of convergence of the Ninomiya-Victoir splitting scheme or related methods for functions $f\in\mathcal{B}^{\psi_i}_k(Z)$.
\end{enumerate}
Note that for simplicity and ease of representation, we restrict ourselves here to equations driven by Brownian motions.
It is possible to deal with more general L\'evy driving processes in a similar manner, cf.~also \cite{TanakaKohatsuHiga2009}.

Consider a stochastic partial differential equation of Da Prato-Zabczyk type 
\begin{equation}
	\label{eq:dpzspde}
	\dd x(t,x_0) = (A+\alpha(x(t,x_0)))\dd t + \sum_{j=1}^{d}\sigma_j(x(t,x_0))\dd W^j_t
\end{equation}
on a separable Hilbert space $X$ with norm $\lVert\cdot\rVert$, where $\alpha$, $\sigma_j\colon X\to X$ are Lip\-schitz continuous, $(W^j_t)_{j=1,\dots,d}$ is a $d$-dimensional Brownian motion and $A$ with domain $\dom A$ generates a strongly continuous, \emph{pseudocontractive} semigroup on $X$.

Assume furthermore that $A$ has a compact resolvent, and that $\alpha$ and $\sigma_j$ are Lipschitz continuous $\dom A^\ell\to\dom A^\ell$, $\ell=1,\dots,m$, as well, where $\dom A^\ell$ is a Hilbert space with respect to the norm $\lVert x\rVert_{\dom A^\ell}:=\left( \sum_{k=0}^{\ell}\lVert A^k x\rVert^2 \right)^{1/2}$.
Therefore, we can consider the equation to be of Da Prato-Zabczyk type on any of the spaces $\dom A^{\ell}$, $\ell=0,\dots,m$.
\cite[Theorem~7.3.5]{DaPratoZabczyk2002} yields that $\mathbb{E}[(1+\lVert x(t,x_0)\rVert_{\dom A^{\ell}}^2)^{s/2}]\le K(1+\lVert x\rVert_{\dom A^{\ell}}^2)^{s/2}$ for $s\ge 2$, $\ell=0,\dots,m$ and $t\in[0,\varepsilon]$ for some $\varepsilon>0$.
\begin{lemma}
	$\dom A^{\ell+1}$ is compactly embedded in $\dom A^{\ell}$, $\ell\ge 0$.
\end{lemma}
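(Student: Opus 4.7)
The plan is to unpack what compact resolvent means at the level of the graph norm and then iterate through the scale of domains using a diagonal extraction argument. This reduces the lemma to the base case $\ell=0$, which is essentially just a reformulation of the compact resolvent assumption.

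First I would verify the base case. Pick any $\lambda$ in the resolvent set of $A$ (which is nonempty, since $A$ generates a pseudocontractive semigroup, so every $\lambda>\omega$ works for the appropriate $\omega\in\mathbb{R}$). A bounded sequence $(x_n)_{n\in\mathbb{N}}$ in $\dom A$ with the graph norm gives a bounded sequence $y_n:=(\lambda-A)x_n$ in $X$. Since $(\lambda-A)^{-1}$ is compact on $X$ by hypothesis, $x_n=(\lambda-A)^{-1}y_n$ admits a subsequence convergent in $X$. Hence the embedding $\dom A\hookrightarrow X$ is compact.

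Next I would bootstrap to general $\ell\ge 0$. Let $(x_n)_{n\in\mathbb{N}}$ be bounded in $\dom A^{\ell+1}$, so
\begin{equation}
\sum_{k=0}^{\ell+1}\lVert A^k x_n\rVert^2 \le M \quad\text{for all $n\in\mathbb{N}$}.
\end{equation}
For each fixed $k\in\{0,1,\dots,\ell\}$, the sequence $y^{(k)}_n:=A^k x_n$ lies in $\dom A$ with $Ay^{(k)}_n=A^{k+1}x_n$, and
\begin{equation}
\lVert y^{(k)}_n\rVert_{\dom A}^2=\lVert A^k x_n\rVert^2+\lVert A^{k+1}x_n\rVert^2\le M.
\end{equation}
By the base case, $(y^{(k)}_n)_{n\in\mathbb{N}}$ has a subsequence converging in $X$. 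Performing a standard diagonal extraction over the finitely many values $k=0,1,\dots,\ell$, I obtain a single subsequence, still denoted $(x_n)_{n\in\mathbb{N}}$, along which $A^k x_n$ converges in $X$ for every $k\in\{0,\dots,\ell\}$ simultaneously. Summing these convergences in the norm $\lVert\cdot\rVert_{\dom A^\ell}$ gives a Cauchy subsequence in $\dom A^\ell$, which is complete. Hence $\dom A^{\ell+1}\hookrightarrow\dom A^\ell$ is compact.

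There is essentially no genuine obstacle: the only subtlety is identifying compact resolvent with compactness of the graph-norm inclusion $\dom A\hookrightarrow X$, which I would be careful to justify since the lemma is stated without further hypotheses on $A$ beyond those used in the surrounding discussion (strong continuity and pseudocontractivity ensure a nonempty resolvent set, so the notion of compact resolvent is unambiguous). The rest is a routine diagonal argument that lifts the base case through the domain scale.
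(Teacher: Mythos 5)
Your proof is correct. It takes a somewhat different route from the paper's: the paper fixes $\lambda_0$ in the resolvent set, observes that $(\lambda_0-A)^{\ell}\colon\dom A^{\ell}\to X$ is an isomorphism, and then argues that a weakly convergent sequence in $\dom A^{\ell+1}$ is mapped by $(\lambda_0-A)^{\ell+1}$ to a weakly convergent sequence in $X$, which the compact operator $(\lambda_0-A)^{-1}$ sends to a norm-convergent sequence; pulling back through the isomorphism gives norm convergence in $\dom A^{\ell}$, and compactness follows since bounded sets in the Hilbert space $\dom A^{\ell+1}$ are weakly sequentially precompact. You instead establish the base case $\dom A\hookrightarrow X$ directly from the definition of compactness (bounded sequences have convergent subsequences) and then propagate it componentwise through the graph norm $\sum_{k=0}^{\ell}\lVert A^k\cdot\rVert^2$ by successive subsequence extraction. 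Your version is slightly more elementary in that it never invokes weak convergence, the weak-to-norm continuity of compact operators, or the reflexivity of $\dom A^{\ell+1}$, so it would survive in a non-reflexive Banach-space setting; the paper's version is shorter because it applies the compact resolvent exactly once via the isomorphism $(\lambda_0-A)^{\ell}$ rather than $\ell+1$ times. Both rest on the same two facts you correctly isolate: closedness of $A$ (so the graph norms are complete) and compactness of one, hence every, resolvent.
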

\begin{proof}
	As $A$ has a compact resolvent and generates a strongly continuous semigroup, there exists some $\lambda_0\in\mathbb{R}$ such that $\lambda_0-A$ is continuously invertible and $(\lambda_0-A)^{-1}\colon X\to X$ is compact.
	Clearly, $(\lambda_0-A)^{\ell}\colon\dom A^{\ell}\to X$ is continuously invertible.

	If a sequence $(x_n)_{n\in\mathbb{N}}$ converges weakly in $\dom A^{\ell+1}$ to some $x\in\dom A^{\ell+1}$, then $(\lambda_0-A)^{\ell+1}x_n$ converges weakly to $(\lambda_0-A)^{\ell+1}x$.
	It follows by the compactness of $(\lambda_0-A)^{-1}$ that $(\lambda_0-A)^{\ell}x_n$ converges strongly to $(\lambda_0-A)^{\ell}x$.
	The claim follows.
\qed\end{proof}

Consider the weight functions 
\begin{alignat}{2}{}
	\psi_{\ell,s}\colon&\dom A^{\ell}\to(0,\infty), 
	\\
	&x\mapsto \psi_{\ell,s}(x):=(1+\lVert x\rVert_{\dom A^{\ell}}^2)^{s/2},
	\quad
	\text{$s\ge 2$, $\ell\ge 0$}.
\end{alignat}
Due to reflexivity, the weak and weak-$*$ topology on $\dom A^{\ell}$ agree.
As $t\to x(t,x_0)$ is clearly right continuous and $X$, $\dom A^{\ell}$ are reflexive, Theorem~\ref{theorem:strongcontprocesscompact} proves that the Markov semigroup $(P_t)_{t\ge 0}$ defined through $(x(t,x_0))_{t\ge 0}$ is strongly continuous on $\mathcal{B}^{\psi_{\ell,s}}( (\dom A^{\ell})_{w})$, $\ell=1,\dots,m$.

The following theorem makes substantial use of the assumption that $ A $ generates a pseudocontractive semigroup.
\begin{theorem}
	\label{theorem:dpzeq-pseudocontractivity}
	If $\alpha$ and $\sigma_j$ are Lipschitz continuous on $\dom A^{\ell}$, then 
	\begin{equation}
		\lVert P_t\rVert_{L(\mathcal{B}^{\psi_{\ell,s}}( (\dom A^{\ell})_{w}))}\le\exp(\omega t)
		\quad\text{for some $\omega>0$}.
	\end{equation}
\end{theorem}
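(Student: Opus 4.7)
By Lemma~\ref{lem:markovpsibound}, it suffices to prove the moment estimate
\begin{equation}
	\mathbb{E}[\psi_{\ell,s}(x(t,x_0))] \le \exp(\omega t)\,\psi_{\ell,s}(x_0)
\end{equation}
for all $x_0\in\dom A^\ell$ and $t\ge 0$, with some $\omega\in\mathbb{R}$ independent of $x_0$. I would derive this by applying It\^o's formula to $\varphi(x):=\psi_{\ell,s}(x)=(1+\lVert x\rVert_{\dom A^\ell}^2)^{s/2}$ along the $\dom A^\ell$-valued Da Prato--Zabczyk solution $x(t,x_0)$ and then closing a Gronwall estimate.

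First, I would record the derivatives: for $x\in\dom A^\ell$,
\begin{alignat}{2}
	D\varphi(x) h
	&= s\,(1+\lVert x\rVert_{\dom A^\ell}^2)^{s/2-1}\langle x,h\rangle_{\dom A^\ell},
	\\
	D^2\varphi(x)(h,h)
	&\le s(s-1)(1+\lVert x\rVert_{\dom A^\ell}^2)^{s/2-1}\lVert h\rVert_{\dom A^\ell}^2.
\end{alignat}
Applying It\^o's formula (rigorously justified by passing through the Yosida regularisation $A_\lambda=\lambda A(\lambda-A)^{-1}$, whose semigroups are pseudocontractive with the same constant, and then letting $\lambda\to\infty$ using the existing moment bound from \cite[Theorem~7.3.5]{DaPratoZabczyk2002}) and taking expectations yields, formally,
\begin{alignat}{2}
	\frac{\dd}{\dd t}\mathbb{E}[\varphi(x(t,x_0))]
	&= \mathbb{E}\bigl[s(1+\lVert x(t,x_0)\rVert_{\dom A^\ell}^2)^{s/2-1}\langle x(t,x_0),Ax(t,x_0)+\alpha(x(t,x_0))\rangle_{\dom A^\ell}\bigr] \notag \\
	&\quad+ \tfrac{1}{2}\sum_{j=1}^{d}\mathbb{E}\bigl[D^2\varphi(x(t,x_0))(\sigma_j(x(t,x_0)),\sigma_j(x(t,x_0)))\bigr].
\end{alignat}

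The decisive step uses pseudocontractivity of $A$: by the Lumer--Phillips theorem there exists $\omega_0\in\mathbb{R}$ with $\langle Ay,y\rangle \le \omega_0\lVert y\rVert^2$ for $y\in\dom A$. Since the powers of $A$ commute and $\langle Ax,x\rangle_{\dom A^\ell}=\sum_{k=0}^{\ell}\langle A(A^k x),A^k x\rangle$, this lifts to
\begin{equation}
	\langle Ax,x\rangle_{\dom A^\ell} \le \omega_0\lVert x\rVert_{\dom A^\ell}^2
	\quad\text{for $x\in\dom A^{\ell+1}$}.
\end{equation}
Lipschitz continuity of $\alpha$, $\sigma_j$ on $\dom A^\ell$ gives $\lVert\alpha(x)\rVert_{\dom A^\ell}+\sum_j\lVert\sigma_j(x)\rVert_{\dom A^\ell}^2\le C(1+\lVert x\rVert_{\dom A^\ell}^2)$, and Cauchy--Schwarz on the inner product term absorbs the $\alpha$-contribution. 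Combining these bounds leads to
\begin{equation}
	\frac{\dd}{\dd t}\mathbb{E}[\varphi(x(t,x_0))] \le \omega\,\mathbb{E}[\varphi(x(t,x_0))]
\end{equation}
for a finite $\omega$ depending on $s$, $\omega_0$ and the Lipschitz constants. Gronwall's lemma delivers the desired exponential bound, and Lemma~\ref{lem:markovpsibound} then translates it into the operator-norm bound on $\mathcal{B}^{\psi_{\ell,s}}((\dom A^\ell)_w)$.

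\textbf{Main obstacle.} The substantive technical point is making It\^o's formula rigorous: $A$ is unbounded on $\dom A^\ell$ and $\varphi$ is not globally $\mathrm{C}^2$ on the whole state space in the strong It\^o sense one would want. My preferred route is to work with the Yosida approximations $A_\lambda$, for which the corresponding SDE has a classical strong solution $x^\lambda(t,x_0)\in\dom A^\ell$ and It\^o's formula applies without issue; the pseudocontractivity constant $\omega_0$ is preserved since $\lVert\exp(tA_\lambda)\rVert\le\exp(\omega_0 t)$ uniformly in $\lambda$. The Gronwall step then yields the exponential bound uniformly in $\lambda$, and a standard approximation argument (convergence $x^\lambda\to x$ in $L^2$ as $\lambda\to\infty$, together with lower semicontinuity and uniform integrability provided by the moment bound in \cite[Theorem~7.3.5]{DaPratoZabczyk2002}) lets us pass to the limit and retain the exponential bound for the original process.
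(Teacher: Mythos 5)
Your proof is essentially correct, but it takes a genuinely different route from the paper. The paper explicitly avoids what you call the "usual finite dimensional lines": instead of justifying It\^o's formula for $\psi_{\ell,s}$ along the mild solution, it invokes the Sz\H{o}kefalvi-Nagy dilation theorem to embed $H$ into a larger Hilbert space $\mathcal{H}$ on which the (pseudo)contraction semigroup $\exp(tA)$ lifts to a unitary group $\mathcal{U}_t$, prolongs the SPDE to $\mathcal{H}$, and passes to moving coordinates $Y(t,x_0)=\mathcal{U}_{-t}X(t,x_0)$. The resulting equation has only Lipschitz, time-dependent coefficients $\beta(t,y)=\mathcal{U}_{-t}\alpha(\pi\mathcal{U}_t y)$ and $\eta_j(t,y)=\mathcal{U}_{-t}\sigma_j(\pi\mathcal{U}_t y)$ and no unbounded drift, so It\^o's formula applies to $\psi_{\mathcal{H}}(Y(t,x_0))$ without any regularisation; Gronwall gives $\mathbb{E}[\psi_{\mathcal{H}}(Y(t,x_0))]\le\exp(\omega t)\psi(x_0)$, and the bound transfers back because $\psi_{\ell,s}$ is a nondecreasing function of the norm and $\lVert\pi\mathcal{U}_t\rVert\le 1$, so $\psi(x(t,x_0))=\psi(\pi\mathcal{U}_tY(t,x_0))\le\psi_{\mathcal{H}}(Y(t,x_0))$. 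Your approach instead confronts the unbounded operator head-on via dissipativity of $A$ on $\dom A^\ell$ (which does lift correctly since $A$ commutes with $A^k$, so the restricted semigroup inherits the bound $\exp(\omega_0 t)$) together with Yosida regularisation and a limiting argument. This is a legitimate and standard alternative — the Yosida-approximated equations have strong solutions converging to the mild solution for Lipschitz coefficients, and the constants are uniform in $\lambda$ — but it is precisely the technical burden (convergence $x^\lambda\to x$, uniform integrability, martingale property of the stochastic integral term) that the authors' dilation trick is designed to sidestep. What your route buys is independence from the dilation machinery and from the specific norm-dependence of $\psi$; what the paper's route buys is that It\^o's formula is applied only in a setting where it is classically valid.
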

\begin{remark}
	The proof is somehow twisted in infinite dimension and does not follow the usual finite dimensional lines of proving that the local martingale part of $ \psi_{\ell,s}(x(t,x_0)) $ is in fact a martingale, and therefore Ito's formula yields the result: 
	we use the Sz\H{o}kefalvi-Nagy theorem \cite[Theorems~7.2.1 and 7.2.3]{Davies1976} and move to a larger Hilbert space $ H \subset \mathcal{H} $ containing $ H $ as a closed subspace and where we can write the solution process $ x(t,x_0) = \pi \mathcal{U}_t Y(t,x_0) $ as orthogonal projection.
\end{remark}

\begin{proof}
	We proceed similarly as in \cite{Teichmann2009}.
	Take $ \ell = 0 $ without any restriction and set $ \psi = \psi_{0,s}$. 
	Additionally we assume that $ A $ generates a contractive semigroup on $H$ by adding the growth to $ \alpha $. 
	Let us consider a larger Hilbert space $ H \subset \mathcal{H} $, where the semigroup generated by $ H $ lifts to a unitary group $ \mathcal{U} $. The projection onto $ H $ is denoted by $ \pi $. Then we consider
	the stochastic partial differential equation prolonged to $ \mathcal{H} $
	\begin{equation}
		\label{eq:dpzspde_proxy}
		\dd X(t,x_0) = (\mathcal{A}X(t,x_0) +\alpha(\pi(X(t,x_0))))\dd t + \sum_{j=1}^{d}\sigma_j(\pi (X(t,x_0)))\dd W^j_t,
	\end{equation}
	where $\mathcal{A}$ is the extension of $A$ to $\mathcal{H}$. 
	By switching to a ``coordinate system'' which moves with velocity $ x \mapsto \mathcal{A} x $, we obtain a new stochastic differential equation
	\begin{equation}
		\dd Y (t,x_0) = \beta(t,Y(t,y_0))\dd t + \sum_{j=1}^{d}\eta_j(t,Y(t,x_0))\dd W^j_t,
	\end{equation}
	with Lipschitz continuous vector fields 
	\begin{alignat}{2}{}
		\beta(t,y)  &= \mathcal{U}_{-t} \alpha(\pi \mathcal{U}_t y) 
		\quad\text{and} \\
		\eta_j(t,y) &= \mathcal{U}_{-t} \sigma_j(\pi \mathcal{U}_t y)
		\quad\text{for $ t \in [0,\varepsilon] $ and $ y \in H $}.
	\end{alignat}

	With \cite[Theorem~7.3.5]{DaPratoZabczyk2002} we can conclude that $ \sup_{t \in [0,\varepsilon]} \mathbb{E}[\lVert Y(t,x_0) \rVert^p]<\infty $ for $p\ge 2$ and $\varepsilon>0$ small.
	Ito's formula applied to 
	\begin{equation} 
		\psi_{\mathcal{H}}(Y(t,x_0)) := {(1+ \lVert Y(t,x_0) \rVert^2)}^{p/2}
	\end{equation}
	together with linear growth and Gronwall's inequality then yields the result; more precisely, defining 
	\begin{equation}
		\mathcal{L}_t f(x):=Df(x)\cdot\beta(t,x) + \frac{1}{2}\sum_{j=1}^{d}D^2 f(x)(\eta_j(t,x),\eta_j(t,x)),
	\end{equation}
	we see that
	\begin{align}
		\mathbb{E}[\psi_{\mathcal{H}}(Y(t,x_0))] = &  \psi_{\mathcal{H}}(x_0) + \int_0^t \mathbb{E}[\mathcal{L}_t(\psi_{\mathcal{H}})(Y(s,x_0)] ds \notag \\
		\leq & \psi(x_0) + \omega \int_0^t \mathbb{E}[\psi_{\mathcal{H}}(Y(s,x_0))] ds,
	\end{align}
	where the constant $ \omega $ depends on the Lipschitz and growth bounds of the vector fields $ \alpha $ and $ \sigma_j $. Noting that $\psi(x_0) = \psi_{\mathcal{H}}(x_0) $, we consider $ x(t,x_0) = \pi \mathcal{U}_t Y(t,x_0) $ and realise -- due to $ \lVert \pi \mathcal{U}_t \rVert \leq 1 $ -- that
	\begin{equation}
		\mathbb{E}[\psi(x(t,x_0)] \leq \mathbb{E}[\psi_{\mathcal{H}}(Y(t,x_0))]] \leq \exp(\omega t) \psi_{\mathcal{H}}(x_0)= 
		\exp(\omega t) \psi(x_0),
	\end{equation}
	which is the desired result.
\qed\end{proof}
\begin{remark}
        Note that under the assumption that the semigroup generated by $A$ consists of compact operators, a condition that is in general stronger than the existence of a compact resolvent (see \cite[Theorem 2.3.2]{Pazy1983}), $(P_t)_{t\ge 0}$ will also be strongly continuous on $\mathcal{B}^{\psi_{0,s}}(X_w)$, $s\ge 2$, by an argument as in \cite[Theorem~2.2]{MaslowskiSeidler1999}.
        $A$ is nevertheless unbounded on $X$, which means that estimates using $\mathcal{B}^{\psi_{\ell,s}}( (\dom A^{\ell})_w )$ are still mandatory.
\end{remark}

Consider now two splitting scheme for \eqref{eq:dpzspde}: the Euler scheme (in an geometric integrator version), and the Ninomiya-Victoir scheme. Assuming that the vector fields $\sigma_j$ are continuously differentiable with bounded first derivative, we switch to Stratonovich form and define $z^0(t,x)$, $z^j(t,x)_t$, $j=1,\dots,d$ as the solutions of
\begin{alignat}{2}
	\frac{\dd}{\dd t} z^0(t,x_0) 
	&= Az^0(t,x_0)+\alpha(z^0(t,x_0))-\frac{1}{2}\sum_{j=1}^{d}D\sigma_j(z^0(t,x_0))\sigma_j(z^0(t,x_0)) \notag \\
	& = Az^0(t,x_0)+\alpha_{0}(z^0(t,x_0)), \\
	\dd z^j(t,x_0)  &= \sigma_j(z^j(t,x_0))\circ\dd W^j_t
\end{alignat}
for all $j=1,\dots,d$, where $\alpha_{0}(z):=\alpha(z)-\frac{1}{2}\sum_{j=1}^{d}D\sigma_j(z)\sigma_j(z)$ is the Stra\-to\-no\-vich-corrected drift.
The use of Stratonovich integrals is not mandatory in our setting as it is in approaches guided by Lyons-Victoir cubature \cite{LyonsVictoir2004,BayerTeichmann2008,NinomiyaVictoir2008}, but it is very helpful~-- the processes $z^j(t,x)$ are, for $j=1,\dots,d$, given through evaluation of the flow of the vector field $\sigma_j$ at random times given by $W^j_t$:
$z^j(t,x)=\mathrm{Fl}^{\sigma_j}_{W^j_t}(x)$, where $\mathrm{Fl}^{\sigma_j}_s$ denotes the flow defined by $\sigma_j$.
Note that only the equation for $Z^{0,x}_t$ contains the unbounded operator $A$, but that this equation is a deterministic evolution equation on $X$.

By Theorem~\ref{theorem:dpzeq-pseudocontractivity}, all Markov semigroups $P^j_t$ are simultaneously strongly continuous on $\mathcal{B}^{\psi_{\ell,s}}( (\dom A^{\ell})_w )$, and $\lVert P^j_t\rVert_{L(\mathcal{B}^{\psi_{\ell,s}}( (\dom A^{\ell})_w ))}\le\exp(\omega_j t)$ with some constants $\omega_j\in\mathbb{R}$, $j=0,\dots,d$.
\begin{remark}
	For the split semigroups, we can also prove pseudocontractivity directly without invoking the Sz\H{o}kefalvi-Nagy theorem.
	Indeed, for $P^j_t$, $j=1,\dots,d$, we can apply It\^o's formula.
	For $P^0_t$, we use the mild formulation 
	\begin{equation}
		z^0(t,x_0)
		=
		\exp(tA)x_0+\int_{0}^{t}\exp( (t-s)A )\alpha(z^0(s,x_0))\dd s,
	\end{equation}
        where $ \exp(At) $ denotes the semigroup generated by $ A $ at time $ t $.
	As $A$ is pseudocontractive, we can assume without loss of generality that $A$ is contractive by modifying $\alpha_0$ by a constant times the identity.
	Together with the Lipschitz continuity of $\alpha$ with constant denoted by $L$, this yields
	\begin{equation}
		\lVert z^0(t,x)\rVert
		\le \lVert x_0\rVert + \int_{0}^{t}\lVert\alpha(z^0(s,x_0))\rVert\dd s
		\le \lVert x_0\rVert + \int_{0}^{t}L\lVert z^0(s,x_0)\rVert\dd s.
	\end{equation}
	The Gronwall inequality proves the required estimate.

	Note that this, together with the fact that the split semigroups approximate $P_t$ strongly on $\mathcal{B}^{\psi}(X_{w*})$ (see Corollary~\ref{cor:strongconvergence}), yields an alternative proof of Theorem~\ref{theorem:dpzeq-pseudocontractivity}.
\end{remark}

We define now two well-known splitting schemes and prove optimal rates of convergence on spaces of sufficiently smooth functions in our general setting.
\begin{definition}[Euler splitting scheme]
	One step of the Euler splitting scheme is defined as
	\begin{equation}
		Q^{\mathrm{Euler}}_{(\Delta t)} := P^0_{\Delta t} P^1_{\Delta t}\dotsm P^d_{\Delta t},
	\end{equation}
	which is a geometric integrator version of the well-known Euler scheme.
\end{definition}
\begin{definition}[Ninomiya-Victoir splitting scheme]
	One step of the Ni\-no\-miya-Victoir splitting is defined as
	\begin{equation}
		Q^{\mathrm{NV}}_{(\Delta t)} := \frac{1}{2}P^0_{\Delta t/2}\left( P^1_{\Delta t}\dotsm P^d_{\Delta t} + P^d_{\Delta t}\dotsm P^1_{\Delta t} \right)P^0_{\Delta t/2},
	\end{equation}
	which should in theory improve the Euler scheme's weak rate of convergence by one order.
\end{definition}

Let $\mathcal{G}_j$ with domain $\dom \mathcal{G}_j$ be the infinitesimal generator of $(P^j_t)_{t\ge 0}$, where $(P^j_t)_{t\ge 0}$ is considered on $\mathcal{B}^{\psi_{\ell_0,s_0}}( (\dom A^{\ell_0})_w )$ with some fixed $\ell_0\in\{0,\dots, m-1\}$, $s_0\ge 2$. 
The function spaces defined below will be fundamental for proving convergence estimates.
\begin{definition}
	Let $p\ge 1$ be given.
	We say that $f\in\mathcal{M}_T^p$ if and only if $f\in\mathcal{B}^{\psi_{\ell_0,s_0}}( (\dom A^{\ell_0})_w )$, $P_t f\in\dom\mathcal{G}^p\cap\bigcap_{j=0}^d\dom\mathcal{G}_j^p$ for $t\in[0,T]$,
	\begin{alignat}{2}
		&C_f:=\sup_{\substack{t\in[0,T] \\ j_1,\dots,j_p=0,\dots,d}}\lVert \mathcal{G}_{j_1}\dotsm\mathcal{G}_{j_p}P_t f\rVert_{\psi_{\ell_0,s_0}}
		<\infty
		\quad\text{and} \\
		&\quad\mathcal{G}^i P_t f = \Biggl( \sum_{j=0}^{d}\mathcal{G}_j \Biggr)^i P_t f,
		\quad i=1,\dots,p.
	\end{alignat}
\end{definition}
\begin{proposition}
	\label{prop:generalisedhansenostermann}
	Let $Q_{\Delta t}$ be a splitting for $P_{\Delta t}$ of classical order $p$.
	For $f\in\mathcal{M}_T^{p+1}$, the splitting converges of optimal order, that is, with a constant $C_f$ independent of $n\in\mathbb{N}$ and $\Delta t>0$, we have that for $n\Delta t\le T$,
	\begin{equation}
		\lVert P_{n\Delta t}f - Q_{(\Delta t)}^n f\rVert_{\psi}
		\le
		C_f \Delta t^p.
	\end{equation}
\end{proposition}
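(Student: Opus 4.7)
The plan is to combine the standard telescoping identity
\begin{equation*}
	Q_{(\Delta t)}^n f - P_{n\Delta t} f = \sum_{i=0}^{n-1} Q_{(\Delta t)}^{n-1-i} \bigl( Q_{(\Delta t)} - P_{\Delta t} \bigr) P_{i\Delta t} f
\end{equation*}
with a uniform stability bound on the factors $Q_{(\Delta t)}^{n-1-i}$ and a local consistency estimate of order $\Delta t^{p+1}$ for $(Q_{(\Delta t)} - P_{\Delta t}) P_{i\Delta t} f$. Summing $n$ contributions of size $\Delta t^{p+1}$ and using $n\Delta t \le T$ then gives the asserted global rate $\Delta t^{p}$.

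For stability, each factor $P^{j}_{t}$ entering $Q_{(\Delta t)}$ is a generalised Feller semigroup on $\mathcal{B}^{\psi_{\ell_0,s_0}}((\dom A^{\ell_0})_w)$ with $\lVert P^{j}_{t}\rVert \le \exp(\omega_j t)$ by Theorem~\ref{theorem:dpzeq-pseudocontractivity}. Composition therefore yields $\lVert Q_{(\Delta t)}\rVert_{L(\mathcal{B}^{\psi_{\ell_0,s_0}})} \le \exp(\omega \Delta t)$ for some $\omega\in\mathbb{R}$, hence $\lVert Q_{(\Delta t)}^{n-1-i}\rVert \le \exp(\omega T)$ uniformly in $i$ and $n$ as long as $n\Delta t \le T$.

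For the local consistency estimate, the assertion that $Q_{(\Delta t)}$ is a splitting of classical order $p$ means that expanding $P_{\Delta t}g$ via $\sum_{k=0}^{p}\frac{\Delta t^{k}}{k!}\mathcal{G}^k g$ plus an integral remainder, and expanding each factor $P^{j}_{\delta_j\Delta t}$ appearing in $Q_{(\Delta t)}g$ analogously in powers of $\Delta t$ using $\mathcal{G}_j^k g$, produces two Taylor expansions that coincide up to order $\Delta t^{p}$. The residual is a finite sum of integral remainders, each bounded in $\lVert\cdot\rVert_{\psi_{\ell_0,s_0}}$ by a universal constant times $\Delta t^{p+1}\max_{j_1,\dots,j_{p+1}}\lVert \mathcal{G}_{j_1}\dotsm\mathcal{G}_{j_{p+1}}g\rVert_{\psi_{\ell_0,s_0}}$. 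Applied with $g = P_{i\Delta t}f$, the identity $\mathcal{G}^{p+1}P_t f = \bigl(\sum_{j}\mathcal{G}_j\bigr)^{p+1}P_t f$ from the definition of $\mathcal{M}_T^{p+1}$ is precisely what is needed to cancel the lower order terms in the two expansions, while the uniform bound $C_f$ on the iterated generator norms controls the remainder independently of $i$.

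The main technical obstacle is the rigorous execution of this local consistency expansion: because the generators $\mathcal{G}_j$ are unbounded and non-commuting, one must carefully keep each intermediate state inside the appropriate iterated domains and collect all remainder terms from the product of Taylor expansions. This is exactly the estimate developed by Hansen and Ostermann~\cite{HansenOstermann2009}, and the function space $\mathcal{M}_T^{p+1}$ is tailored so as to absorb precisely the regularity hypotheses of their theorem, so the proof reduces to assembling the telescoping identity, the stability bound, and their local estimate.
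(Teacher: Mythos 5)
Your proposal is correct and follows essentially the same route as the paper: a telescoping sum, power-boundedness of $Q_{(\Delta t)}$ from the pseudocontractivity $\lVert P^j_t\rVert\le\exp(\omega_j t)$, and the local $O(\Delta t^{p+1})$ consistency estimate delegated to Hansen--Ostermann, with the identities $\mathcal{G}^i P_t f=(\sum_j\mathcal{G}_j)^i P_t f$ and the uniform bound on the iterated $\mathcal{G}_{j_1}\dotsm\mathcal{G}_{j_{p+1}}P_t f$ built into $\mathcal{M}_T^{p+1}$ supplying exactly the hypotheses needed there. The only cosmetic slip is attributing the cancellation of the lower-order terms to the single identity at order $p+1$ (the identities for $i\le p$ do that; the one at order $p+1$ serves to express the remainder), but this does not affect the argument.
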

\begin{proof}
	Set $g:=P_t f\in\dom\mathcal{G}\cap\bigcap_{j=0}^d \mathcal{G}_j$.
	The results in \cite[Proof of Theorem~3.4, Section~4.1, Section~4.4]{HansenOstermann2009} prove existence of a family of linear operators $T_t\colon\mathcal{B}^{\psi_{\ell_0,s_0}}( (\dom A^{\ell_0})_w )\to\mathcal{B}^{\psi_{\ell_0,s_0}}( (\dom A^{\ell_0})_w )$ which are uniformly bounded, that is,
	\begin{equation}
		\sup_{t\in[0,\varepsilon]}\lVert T_t\rVert_{L(\mathcal{B}^{\psi}(\dom A^{\ell_0})_w)}\le C_{\varepsilon}<\infty
		\quad\text{for some $\varepsilon>0$},
	\end{equation}
	such that the short term asymptotic expansions of $P_{\Delta t}g$ and $Q_{(\Delta t)}g$ of order $p$ coincide, i.e.~
	\begin{equation}
		P_{\Delta t}g-Q_{(\Delta t)}g
		=
		\Delta t^{p+1} T_{\Delta t}\mathcal{E}_{p+1} g,
	\end{equation}
	where $\mathcal{E}_{p+1}$ is a linear combination of the operators $\mathcal{G}_{j_1}\dotsm\mathcal{G}_{j_{p+1}}$, $j_1,\dots,j_{p+1}=0,\dots,d$, where we apply that by assumption, $\mathcal{G}^{p+1}$ is itself a linear combination of these operators when applied to $g$.
	Thus,
	\begin{equation}
		\lVert P_{\Delta t}g-Q_{(\Delta t)}g\rVert_{\psi}
		\le
		C_f \Delta t^{p+1} \lVert T_{\Delta t}\rVert_{L(\mathcal{B}^{\psi_{\ell,s}}( (\dom A^{\ell})_w))}
		\le
		C_f \Delta t^{p+1}.
	\end{equation}
	It follows that
	\begin{alignat}{2}{}
		\lVert P_{n\Delta t}f-Q_{(\Delta t)}^n f\rVert_{\psi}
		&\le
		C_f \Delta t^{p+1}\sum_{i=1}^{n}\lVert Q_{(\Delta t)}^j\rVert_{L(\mathcal{B}^{\psi_{\ell,s}}( (\dom A^{\ell})_w))} \notag \\
		&\le
		C_f \Delta t^{p}.
	\end{alignat}
\qed\end{proof}

With respect to the Euler scheme define now
For the Euler and Ninomiya-Victoir schemes, we define $\mathcal{M}^{\mathrm{Euler}}_T\subset\mathcal{B}^{\psi_{\ell,s}}( (\dom A^{\ell})_w )$ 
\begin{equation}
	\mathcal{M}^{\mathrm{Euler}}_T
	:=
	\mathcal{M}_T^{2}
\end{equation}
and $\mathcal{M}^{\mathrm{NV}}_T\subset\mathcal{B}^{\psi_{\ell,s}}( (\dom A^{\ell})_w )$ by
\begin{equation}
	\mathcal{M}^{\mathrm{NV}}_T
	:=
	\mathcal{M}_T^{3}.
\end{equation}
The following results are now an easy consequence of Proposition~\ref{prop:generalisedhansenostermann}.
\begin{corollary}
	For $ f \in \mathcal{M}^{\mathrm{Euler}}_T $ there exists some constant $C_f$ independent of $n\in\mathbb{N}$ and $\Delta t>0$ such that if $n\Delta t\le T$,
	\begin{equation}
		\lVert P_{n\Delta t} f - Q_{(\Delta t)}^n f\rVert_{\psi}
		\le
		C_f \Delta t.
	\end{equation}
	Hence, for $f\in\mathcal{M}^{\mathrm{Euler}}_T$, the Euler splitting scheme converges of optimal order.
\end{corollary}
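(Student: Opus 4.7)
The statement is engineered to be an immediate consequence of Proposition~\ref{prop:generalisedhansenostermann}: by definition $\mathcal{M}^{\mathrm{Euler}}_T=\mathcal{M}_T^{2}$, which coincides with the class $\mathcal{M}_T^{p+1}$ appearing in that proposition precisely when $p=1$. Thus the plan is to verify that the Euler splitting $Q^{\mathrm{Euler}}_{(\Delta t)}=P^0_{\Delta t}P^1_{\Delta t}\dotsm P^d_{\Delta t}$ has classical order $p=1$, and then quote the proposition with $p=1$.

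To check the classical order, I would take $g=P_t f$ for $f\in\mathcal{M}_T^2$ and $t\in[0,T]$. By definition of $\mathcal{M}_T^2$, $g$ lies in $\dom\mathcal{G}^2\cap\bigcap_{j=0}^d\dom\mathcal{G}_j^2$ and satisfies $\mathcal{G}g=\sum_{j=0}^d\mathcal{G}_j g$. Using the semigroup Taylor formula together with the pseudocontractivity $\lVert P^j_s\rVert\le\exp(\omega_j s)$ supplied by Theorem~\ref{theorem:dpzeq-pseudocontractivity}, each factor admits the expansion
\begin{equation*}
	P^j_{\Delta t} g = g + \Delta t\,\mathcal{G}_j g + \int_0^{\Delta t}(\Delta t-s)P^j_s\mathcal{G}_j^2 g\,\dd s,
\end{equation*}
with the remainder bounded in $\lVert\cdot\rVert_{\psi_{\ell_0,s_0}}$ by $\tfrac{1}{2}\Delta t^2 \exp(\omega_j\Delta t)\lVert\mathcal{G}_j^2 g\rVert_{\psi_{\ell_0,s_0}}$. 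Iterating this through the product and collecting zeroth- and first-order contributions yields
\begin{equation*}
	Q^{\mathrm{Euler}}_{(\Delta t)}g = g + \Delta t\sum_{j=0}^d\mathcal{G}_j g + O(\Delta t^2) = g + \Delta t\,\mathcal{G}g + O(\Delta t^2),
\end{equation*}
which matches the analogous Taylor expansion $P_{\Delta t}g = g + \Delta t\,\mathcal{G}g + O(\Delta t^2)$ up to order $\Delta t$. This is the required classical order $p=1$.

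With classical order $p=1$ in hand, Proposition~\ref{prop:generalisedhansenostermann} applied to $Q^{\mathrm{Euler}}_{(\Delta t)}$ and the class $\mathcal{M}_T^{2}=\mathcal{M}^{\mathrm{Euler}}_T$ immediately delivers
\begin{equation*}
	\lVert P_{n\Delta t}f-(Q^{\mathrm{Euler}}_{(\Delta t)})^n f\rVert_{\psi}\le C_f\Delta t
\end{equation*}
whenever $n\Delta t\le T$, with $C_f$ independent of $n$ and $\Delta t$.

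The only potential obstacle is bookkeeping during the iterated expansion: one must ensure that the partial products $P^{j+1}_{\Delta t}\dotsm P^d_{\Delta t}g$ remain in the domains needed for the next Taylor step. This regularity is exactly the reason the joint-domain condition is built into the definition of $\mathcal{M}_T^2$, and it is precisely what the uniformly bounded auxiliary operators $T_t$ from \cite{HansenOstermann2009} encode in the proof of Proposition~\ref{prop:generalisedhansenostermann}. Consequently, no work beyond identifying the classical order $p=1$ is required.
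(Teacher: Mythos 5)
Your proposal is correct and matches the paper's (implicit) argument exactly: the paper proves this corollary simply by invoking Proposition~\ref{prop:generalisedhansenostermann} with $p=1$ and $\mathcal{M}^{\mathrm{Euler}}_T=\mathcal{M}_T^{2}$, deferring the classical-order-one property of the Lie--Trotter/Euler composition to the cited results of Hansen and Ostermann. Your additional Taylor-expansion verification of the classical order, with the domain bookkeeping correctly delegated to the definition of $\mathcal{M}_T^{2}$ and the operators $T_t$, is consistent with and slightly more explicit than what the paper writes.
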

\begin{corollary}
	For $ f \in \mathcal{M}^{\mathrm{NV}}_T $ there exists some constant $C_f$ independent of $n\in\mathbb{N}$ and $\Delta t>0$ such that if $n\Delta t\le T$,
	\begin{equation}
		\lVert P_{n\Delta t} f - (Q^{\mathrm{NV}}_{(\Delta t)})^n f\rVert_{\psi}
		\le
		C_f \Delta t^2.
	\end{equation}
	Hence, for $f\in\mathcal{M}^{\mathrm{NV}}_T$, the Ninomiya-Victoir splitting scheme converges of optimal order.
\end{corollary}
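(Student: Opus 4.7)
The plan is to invoke Proposition~\ref{prop:generalisedhansenostermann} directly with splitting order $p = 2$, since the class $\mathcal{M}^{\mathrm{NV}}_T = \mathcal{M}_T^{3}$ is tailored exactly to supply the $(p+1)$-fold smoothness required by that proposition. So everything reduces to checking that the Ninomiya-Victoir recipe $Q^{\mathrm{NV}}_{(\Delta t)}$ really is a splitting of classical order two in the sense used in Proposition~\ref{prop:generalisedhansenostermann}.

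First I would verify the local consistency. The scheme $Q^{\mathrm{NV}}_{(\Delta t)}$ is the symmetrised Strang-type product $\tfrac{1}{2}P^0_{\Delta t/2}(P^1_{\Delta t}\dotsm P^d_{\Delta t} + P^d_{\Delta t}\dotsm P^1_{\Delta t})P^0_{\Delta t/2}$. A Baker-Campbell-Hausdorff style expansion shows that each of the two palindromic products agrees with $P_{\Delta t}$ up to order $\Delta t^2$ at the level of formal generators, and the symmetric averaging kills the remaining commutator $[\mathcal{G}_i,\mathcal{G}_j]$ terms of order $\Delta t^2$, so that the combined expansion matches $\exp(\Delta t\sum_{j=0}^d\mathcal{G}_j)$ through order $\Delta t^2$. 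Lifted to the semigroup setting via the Hansen-Ostermann machinery (as used in the proof of Proposition~\ref{prop:generalisedhansenostermann}), this yields a representation
\begin{equation}
	P_{\Delta t} g - Q^{\mathrm{NV}}_{(\Delta t)} g = \Delta t^{3} T_{\Delta t} \mathcal{E}_3 g,
\end{equation}
where $\mathcal{E}_3$ is a fixed linear combination of triple products $\mathcal{G}_{j_1}\mathcal{G}_{j_2}\mathcal{G}_{j_3}$, $j_i\in\{0,\dots,d\}$, and $T_{\Delta t}$ is a family of operators on $\mathcal{B}^{\psi_{\ell_0,s_0}}((\dom A^{\ell_0})_{w})$ that is uniformly bounded for $\Delta t\in[0,\varepsilon]$.

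Second, I apply this identity with $g = P_t f$ for $f\in\mathcal{M}_T^{\mathrm{NV}} = \mathcal{M}_T^{3}$. By definition of $\mathcal{M}_T^3$, $P_t f$ lies in $\dom\mathcal{G}^3\cap\bigcap_{j=0}^{d}\dom\mathcal{G}_j^{3}$ for all $t\in[0,T]$, and the sup-norm of $\mathcal{G}_{j_1}\mathcal{G}_{j_2}\mathcal{G}_{j_3} P_t f$ in $\psi_{\ell_0,s_0}$-norm is bounded by the constant $C_f$. Hence the local error $\lVert P_{\Delta t}P_t f - Q^{\mathrm{NV}}_{(\Delta t)}P_t f\rVert_{\psi}$ is of order $\Delta t^{3}$ uniformly in $t\in[0,T]$. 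Finally, power-boundedness of $Q^{\mathrm{NV}}_{(\Delta t)}$ on $[0,T]$ is automatic, since Theorem~\ref{theorem:dpzeq-pseudocontractivity} gives $\lVert P^j_{\Delta t}\rVert\le\exp(\omega_j\Delta t)$ for every $j$, so that $\lVert(Q^{\mathrm{NV}}_{(\Delta t)})^n\rVert\le\exp(\omega n\Delta t)\le\exp(\omega T)$ with $\omega:=\sum_{j=0}^d\omega_j$. Summing the telescoped local errors as in Proposition~\ref{prop:generalisedhansenostermann} yields the claimed bound $\lVert P_{n\Delta t}f - (Q^{\mathrm{NV}}_{(\Delta t)})^n f\rVert_\psi\le C_f\Delta t^{2}$.

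The main obstacle is conceptual rather than computational: it is the order-two local consistency in the unbounded-generator, infinite-dimensional setting. The BCH-type expansion must not be read formally but interpreted through the Hansen-Ostermann framework, and every intermediate expression $\mathcal{G}_{j_1}\mathcal{G}_{j_2}\mathcal{G}_{j_3}P_t f$ must be a bona fide element of $\mathcal{B}^{\psi_{\ell_0,s_0}}((\dom A^{\ell_0})_w)$. This is precisely why $\mathcal{M}_T^{3}$ is defined the way it is, and why the previous section was invested in showing that composition with the vector fields $\alpha_0,\sigma_j$ preserves the spaces $\mathcal{B}^{\psi}_k(Z_{w*})$ via Theorem~\ref{theorem:Bpsikvfmapping}. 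Once these ingredients are in place the estimate follows with no further work.
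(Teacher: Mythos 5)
Your argument is exactly the paper's: the corollary is stated as an immediate consequence of Proposition~\ref{prop:generalisedhansenostermann} applied with classical order $p=2$ and $f\in\mathcal{M}^{\mathrm{NV}}_T=\mathcal{M}_T^{3}$, which is precisely what you do. The additional detail you supply (the Strang/BCH verification of order two and the power-boundedness of $Q^{\mathrm{NV}}_{(\Delta t)}$ from pseudocontractivity of the split semigroups) correctly unpacks what the proposition and the Hansen--Ostermann reference already encapsulate.
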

\begin{remark}
	Note that in principle, we can now also consider different splittings than the Euler or the Ninomiya-Victoir schemes.
	It is, however, not possible to obtain higher rates of convergence due to inherent limits of splitting schemes with positive coefficients (see \cite{BlanesCasas2005}), and positivity of coefficients is mandatory in the probabilistic setting under concern.
\end{remark}

We derive easy conditions guaranteeing $f\in\mathcal{M}^{\mathrm{NV}}_T$.
\begin{lemma}
	\label{lem:dpzeq-vola}
	Suppose that $f\in\mathrm{C}^2(\dom A^{\ell})$, $0\le\ell\le\ell_0$, with uniformly continuous derivatives on bounded sets in $\dom A^{\ell}$.
	Further, assume that $f$, $g\in\mathcal{B}^{\psi_{\ell,s}}( (\dom A^{\ell})_w )$, where
	\begin{equation}
	g:=\frac{1}{2}Df(\cdot)D\sigma_j(\cdot)\sigma_j(\cdot)+D^2 f(\cdot)(\sigma_j(\cdot),\sigma_j(\cdot)).
	\end{equation}
	Then $f\in\dom \mathcal{G}_j$ and $\mathcal{G}_j f=g$.
\end{lemma}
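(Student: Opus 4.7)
The plan is to deduce the generator identity by combining Itô's formula with the strong continuity of $(P^j_t)_{t\ge 0}$ on $\mathcal{B}^{\psi_{\ell,s}}((\dom A^{\ell})_w)$, which was established via Theorem~\ref{theorem:dpzeq-pseudocontractivity} and the remarks following the definition of the splittings. Since $f\in\mathrm{C}^2(\dom A^{\ell})$ with uniformly continuous derivatives on bounded sets, and the Stratonovich SDE for $z^j$ involves only the Lipschitz vector field $\sigma_j$ on the Hilbert space $\dom A^{\ell}$ (no unbounded operator), the Hilbert-space Itô formula applied to $f(z^j(t,x_0))$, after conversion of the Stratonovich integral to Itô form, yields
\begin{equation*}
f(z^j(t,x_0)) = f(x_0) + \int_0^t g(z^j(s,x_0))\,\dd s + \int_0^t Df(z^j(s,x_0))\sigma_j(z^j(s,x_0))\,\dd W^j_s,
\end{equation*}
with $g$ precisely the drift defined in the statement.

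Next I would take expectations to obtain the integrated semigroup identity. Since the stochastic integral is a priori only a local martingale, I would localize by stopping times $\tau_R:=\inf\{t\ge 0 : \psi_{\ell,s}(z^j(t,x_0))\ge R\}$, apply optional stopping, and then pass $R\to\infty$. The uniform moment bound $\mathbb{E}[\psi_{\ell,s}(z^j(s,x_0))]\le C\psi_{\ell,s}(x_0)$ from Theorem~\ref{theorem:dpzeq-pseudocontractivity} together with the pointwise estimate $\lvert g(y)\rvert\le \lVert g\rVert_{\psi_{\ell,s}}\psi_{\ell,s}(y)$ (valid since $g\in\mathcal{B}^{\psi_{\ell,s}}$) provides the dominating integrand required to apply Fubini and dominated convergence. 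The outcome is
\begin{equation*}
P^j_t f(x_0)-f(x_0) = \int_0^t P^j_s g(x_0)\,\dd s\quad\text{for every $x_0\in\dom A^{\ell}$ and $t\in[0,\varepsilon]$.}
\end{equation*}

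Finally, I would upgrade this pointwise identity to norm convergence in $\mathcal{B}^{\psi_{\ell,s}}((\dom A^{\ell})_w)$. Since $(P^j_s)_{s\ge 0}$ is strongly continuous on this space by Theorem~\ref{theorem:Ttstrongcont}, we have $\lVert P^j_s g-g\rVert_{\psi_{\ell,s}}\to 0$ as $s\to 0+$, and therefore
\begin{equation*}
\Bigl\lVert \frac{P^j_t f-f}{t}-g\Bigr\rVert_{\psi_{\ell,s}}
= \Bigl\lVert \frac{1}{t}\int_0^t (P^j_s g-g)\,\dd s\Bigr\rVert_{\psi_{\ell,s}}
\le \frac{1}{t}\int_0^t \lVert P^j_s g-g\rVert_{\psi_{\ell,s}}\,\dd s \to 0.
\end{equation*}
By the definition of the generator of a strongly continuous semigroup, this gives $f\in\dom\mathcal{G}_j$ and $\mathcal{G}_j f=g$.

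The main obstacle I expect is the martingale step: one must verify that the local martingale $\int_0^{\cdot}Df(z^j(s,x_0))\sigma_j(z^j(s,x_0))\,\dd W^j_s$ has integrable quadratic variation uniformly in $x_0$ on sublevel sets of $\psi_{\ell,s}$, and that the dominated convergence argument survives the passage $R\to\infty$ without leaving the norm $\psi_{\ell,s}$. The bookkeeping involves two roles of the weight simultaneously (controlling $f$ and $g$, and controlling the moments of $z^j$), and the hypothesis $g\in\mathcal{B}^{\psi_{\ell,s}}$ together with the uniform $\psi_{\ell,s}$-moment bound on $z^j$ is precisely what ties them together.
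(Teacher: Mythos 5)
Your proposal follows exactly the paper's argument: Itô's formula applied to $f(z^j(t,x_0))$ gives $P^j_t f(x_0)=f(x_0)+\int_0^t P^j_s g(x_0)\,\dd s$, and the conclusion follows from $g\in\mathcal{B}^{\psi_{\ell,s}}( (\dom A^{\ell})_w )$ together with the strong continuity of $(P^j_t)_{t\ge 0}$. The paper states this in two lines; your localization of the stochastic integral and the explicit averaging step $\frac{1}{t}\int_0^t(P^j_s g-g)\,\dd s\to 0$ are just the details it leaves implicit.
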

\begin{proof}
	Under the given assumption, we apply It\^o's formula \cite[Theorem~7.2.1]{DaPratoZabczyk2002} to obtain
	\begin{alignat}{2}
		P^j_t f(x) 
		= f(x) + \int_0^t \mathbb{E}\left[ g(z^j(s,x)) \right]\dd s
		=f(x) + \int_{0}^{t}P^j_s g(x)\dd s.
	\end{alignat}
	The result follows from $g\in\mathcal{B}^{\psi_{\ell,s}}( (\dom A^{\ell})_w )$ and the strong continuity of $(P^j_t)_{t\ge 0}$.
\qed\end{proof}
\begin{lemma}
	\label{lem:dpzeq-drift}
	If $f\in\mathrm{C}^1(\dom A^{\ell})$, $0\le\ell\le\ell_0-1$, with $f$, $g:=Df(\cdot)(A\cdot+\alpha_0(\cdot))\in\mathcal{B}^{\psi_{\ell,s}}( (\dom A^{\ell})_w )$, then $f\in\dom \mathcal{G}_0$ and $\mathcal{G}_0 f=g$.
\end{lemma}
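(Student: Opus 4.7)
The plan is to reproduce the structure of Lemma~\ref{lem:dpzeq-vola}, with It\^o's formula replaced by the classical chain rule applied to the deterministic mild-solution flow. Since $z^0$ is deterministic, $P^0_t f(x_0) = f(z^0(t,x_0))$, and it suffices to establish the identity
\begin{equation}
P^0_t f - f = \int_0^t P^0_s g\, \dd s
\end{equation}
in $\mathcal{B}^{\psi_{\ell,s}}( (\dom A^\ell)_w )$ for $t \in [0,\varepsilon]$. Once this is in hand, the strong continuity of $(P^0_s)_{s\ge 0}$ on this space, combined with $g\in\mathcal{B}^{\psi_{\ell,s}}( (\dom A^\ell)_w )$, permits dividing by $t$ and sending $t\to 0+$ to conclude $f\in\dom\mathcal{G}_0$ with $\mathcal{G}_0 f = g$, exactly as in the closing step of Lemma~\ref{lem:dpzeq-vola}.

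To prove the identity, I would first restrict to regular data $x_0\in\dom A^{\ell+1}$. The Lipschitz continuity of $\alpha$ and the Stratonovich correction $\frac{1}{2}\sum_{j=1}^d D\sigma_j(\cdot)\sigma_j(\cdot)$ on $\dom A^{\ell+1}$ (available because $\ell+1\le\ell_0\le m$) implies via the standard theory of semilinear evolution equations that the mild solution stays in $\dom A^{\ell+1}$ and is $\mathrm{C}^1$ as a $\dom A^{\ell}$-valued curve with derivative $Az^0(t,x_0)+\alpha_0(z^0(t,x_0))\in\dom A^\ell$. Since $f\in\mathrm{C}^1(\dom A^\ell)$, the chain rule applied on $\dom A^\ell$ yields
\begin{equation}
\frac{\dd}{\dd t} f(z^0(t,x_0)) = Df(z^0(t,x_0))\bigl(Az^0(t,x_0)+\alpha_0(z^0(t,x_0))\bigr) = g(z^0(t,x_0)) = P^0_t g(x_0),
\end{equation}
and integrating on $[0,t]$ furnishes the pointwise identity at every $x_0\in\dom A^{\ell+1}$.

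The main obstacle is promoting this pointwise identity from the dense subspace $\dom A^{\ell+1}$ to all of $\dom A^\ell$. My plan is to use the Yosida approximation $x_0^n := n(n-A)^{-1} x_0$, which belongs to $\dom A^{\ell+1}$, converges to $x_0$ in the $\dom A^\ell$-norm, and is uniformly bounded there, since $n(n-A)^{-1}$ commutes with powers of $A$ on their domains and its operator norm is uniformly bounded in $n>\omega$ by pseudocontractivity; hence $\{x_0^n\}_{n\in\mathbb{N}}$ sits in a single sublevel set $K_R$ of $\psi_{\ell,s}$. Both sides of the desired identity already belong to $\mathcal{B}^{\psi_{\ell,s}}( (\dom A^\ell)_w )$ --- the right-hand side understood as a Bochner integral, which is legitimised by the strong continuity of $(P^0_s)_{s\ge 0}$ acting on $g$ --- so by Lemma~\ref{lem:Bcharseqwstarcont} they are weak-$*$ sequentially continuous, and a fortiori norm continuous, on $K_R$. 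Letting $n\to\infty$ therefore transfers the identity from $\dom A^{\ell+1}$ to arbitrary $x_0\in\dom A^\ell$. The delicate ingredient is precisely the uniform $\dom A^\ell$-bound on the Yosida iterates, without which the density argument cannot be localised inside a common sublevel set of $\psi_{\ell,s}$.
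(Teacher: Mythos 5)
Your proposal is correct and follows essentially the same route as the paper, whose proof consists of exactly the identity $P^0_t f(x_0)=f(x_0)+\int_0^t P^0_s g(x_0)\,\dd s$ ``by the fundamental theorem of calculus'' followed by the appeal to $g\in\mathcal{B}^{\psi_{\ell,s}}( (\dom A^{\ell})_w )$ and strong continuity of the semigroup. The only difference is that you make explicit, via restriction to $\dom A^{\ell+1}$ and the uniformly bounded Yosida approximation, the regularity needed for the chain-rule step on mild solutions --- a point the paper's one-line argument leaves implicit --- and this elaboration is sound.
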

\begin{remark}
	For $f\in\mathrm{C}^1(\dom A^{\ell})$, $Df(x)$ defines a continuous functional on $\dom A^{\ell+1}$.
	It follows that $g\colon\dom A^{\ell_0}\to\mathbb{R}$ is well-defined for $\ell\le\ell_0-1$.
\end{remark}
\begin{proof}
	By the fundamental theorem of calculus,
	\begin{equation}
		P^0_t f(x_0) 
		= f(x_0) + \int_{0}^{t} g(z^0(s,x_0))\dd s
		= f(x_0) + \int_{0}^{t}P^0_s g(x_0)\dd s.
	\end{equation}
	Again, $g\in\mathcal{B}^{\psi_{\ell,s}}( (\dom A^{\ell})_w )$ and strong continuity of $(P^j_t)_{t\ge 0}$ prove the result.
\qed\end{proof}
\begin{lemma}
	\label{lem:dpzeq-full}
	Assume that $f\in\mathrm{C}^2(\dom A^{\ell})$, $0\le\ell\le\ell_0-1$, with uniformly continuous derivatives on bounded sets in $\dom A^{\ell}$, and that $f$, $g:=Df(\cdot)(A\cdot+\alpha_0(\cdot))+\sum_{j=1}^{d}\frac{1}{2}Df(\cdot)D\sigma_j(\cdot)\sigma_j(\cdot)+D^2 f(\cdot)(\sigma_j(\cdot),\sigma_j(\cdot))\in\mathcal{B}^{\psi_{\ell,s}}( (\dom A^{\ell})_w )$.
	Then $f\in\dom \mathcal{G}$ and $\mathcal{G} f=g$.
\end{lemma}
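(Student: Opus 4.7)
The strategy mirrors Lemmas~\ref{lem:dpzeq-vola} and~\ref{lem:dpzeq-drift}, now combining the drift and diffusion contributions in a single It\^o formula for the full Da Prato--Zabczyk equation~\eqref{eq:dpzspde}. The restriction $\ell\le\ell_0-1$ is essential: starting from $x_0\in\dom A^{\ell_0}$, \cite[Theorem~7.3.5]{DaPratoZabczyk2002} guarantees that the process $x(t,x_0)$ stays in $\dom A^{\ell_0}\subseteq\dom A^{\ell+1}$ with finite $\psi_{\ell_0,s_0}$-moments, so that $Ax(s,x_0)\in\dom A^\ell$ and $Df(x(s,x_0))(Ax(s,x_0)+\alpha_0(x(s,x_0)))$ is well-defined along trajectories.

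First, I would apply the It\^o formula \cite[Theorem~7.2.1]{DaPratoZabczyk2002} to $f(x(t,x_0))$ for $x_0\in\dom A^{\ell_0}$, yielding
\begin{equation}
	f(x(t,x_0)) = f(x_0) + \int_{0}^{t} g(x(s,x_0))\,\dd s + \sum_{j=1}^d \int_{0}^{t} Df(x(s,x_0))\sigma_j(x(s,x_0))\,\dd W^j_s.
\end{equation}
The uniform continuity of $Df$ and $D^2 f$ on bounded subsets of $\dom A^\ell$, together with the Lipschitz regularity of $\alpha$ and the $\sigma_j$ on $\dom A^{\ell+1}$, justifies this step. The moment bound on $x(s,x_0)$ combined with $g\in\mathcal{B}^{\psi_{\ell,s}}((\dom A^\ell)_w)$ then guarantees square-integrability of the stochastic integrands, so the last sum is a genuine martingale of mean zero; taking expectations gives
\begin{equation}
	P_t f(x_0) = f(x_0) + \int_{0}^{t} P_s g(x_0)\,\dd s
	\quad\text{for every $x_0\in\dom A^{\ell_0}$.}
\end{equation}

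Second, both sides of this identity, viewed as functions of $x_0$, are weak-$*$ continuous on $\dom A^\ell$ by Lemma~\ref{lem:Bcharseqwstarcont}, since $P_t$ and $P_s$ are bounded operators on $\mathcal{B}^{\psi_{\ell,s}}((\dom A^\ell)_w)$ by Theorem~\ref{theorem:dpzeq-pseudocontractivity}. As $\dom A^{\ell_0}$ is norm-dense and hence weak-$*$ dense in $\dom A^\ell$, the pointwise identity on $\dom A^{\ell_0}$ lifts to the Bochner-integral equation
\begin{equation}
	P_t f = f + \int_{0}^{t} P_s g\,\dd s \quad\text{in $\mathcal{B}^{\psi_{\ell,s}}((\dom A^\ell)_w)$.}
\end{equation}

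Finally, strong continuity of $(P_t)_{t\ge 0}$ on $\mathcal{B}^{\psi_{\ell,s}}((\dom A^\ell)_w)$, obtained via Theorems~\ref{theorem:strongcontprocesscompact} and~\ref{theorem:dpzeq-pseudocontractivity}, ensures that $s\mapsto P_s g$ is norm-continuous. Dividing by $t$ and letting $t\to 0+$ then yields $\lim_{t\to 0+} t^{-1}(P_t f - f) = g$ in $\mathcal{B}^{\psi_{\ell,s}}((\dom A^\ell)_w)$, which gives $f\in\dom\mathcal{G}$ and $\mathcal{G}f=g$. The main obstacle is the rigorous justification of It\^o's formula when the unbounded $A$ is present and the test function only has $\psi$-controlled growth; this is precisely where the one-degree gap between $\ell$ and $\ell_0$ enters, providing the extra spatial regularity needed to keep all terms well-defined and integrable along trajectories.
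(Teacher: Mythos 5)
Your argument is the same as the paper's: apply the Da Prato--Zabczyk It\^o formula \cite[Theorem~7.2.1]{DaPratoZabczyk2002} to obtain $P_t f = f + \int_0^t P_s g\,\dd s$, then use $g\in\mathcal{B}^{\psi_{\ell,s}}((\dom A^{\ell})_w)$ and the strong continuity of $(P_t)_{t\ge 0}$ to identify $\mathcal{G}f=g$, exactly as in the proofs of Lemmas~\ref{lem:dpzeq-vola} and~\ref{lem:dpzeq-drift}. The extra density/weak-$*$ continuity step you insert to pass from $\dom A^{\ell_0}$ to $\dom A^{\ell}$ is a harmless elaboration of what the paper leaves implicit.
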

\begin{proof}
	It\^o's formula \cite[Theorem~7.2.1]{DaPratoZabczyk2002}, $g\in\mathcal{B}^{\psi_{\ell,s}}( (\dom A^{\ell})_w )$ and the strong continuity of $(P_t)_{t\ge 0}$ yield the result.
\qed\end{proof}

The following result shows how compactness can be used to prove weak continuity of nonlinear mappings.
\begin{proposition}
	\label{prop:dpzeq-weakcontcompactembedding}
	Suppose that $X$, $Z$ are Banach spaces with norms $\lVert\cdot\rVert_{X}$, $\lVert\cdot\rVert_{Z}$, $Z\subset X$ compactly embedded, and $j\ge 1$.
	Let $F\in\mathrm{C}(X^j;X)$ and assume that for some $r>0$, $F(C_r(0)^j)\subset Z$ and is bounded in $Z$, where we set $C_r(0):=\left\{ z\in Z\colon\lVert z\rVert_{Z}\le r \right\}$.

	Then, $F\colon C_r(0)^j\to Z$ is sequentially weakly continuous, i.e., whenever a sequence $(\zeta_n)_{n\in\mathbb{N}}\subset C_r(0)^j$ converges weakly to $\zeta$, it follows that $F(\zeta_n)$ converges weakly to $F(\zeta)$ in $Z$.
\end{proposition}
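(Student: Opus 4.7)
The plan is to use the compactness of the embedding $Z\hookrightarrow X$ in two essential ways: once to transfer weak convergence in $Z$ to norm convergence in $X$, and a second time to show that any weak cluster point of $(F(\zeta_n))_{n\in\mathbb{N}}$ in $Z$ must coincide with $F(\zeta)$.

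First I would take a sequence $(\zeta_n)_{n\in\mathbb{N}}\subset C_r(0)^j$ converging weakly to $\zeta$ in $Z^j$. Since the embedding $Z\hookrightarrow X$ is compact, the identity map sends weakly convergent sequences in $Z$ to norm-convergent sequences in $X$; applied componentwise this gives $\zeta_n\to\zeta$ in the norm topology of $X^j$. The continuity of $F\colon X^j\to X$ then yields $F(\zeta_n)\to F(\zeta)$ in the norm of $X$.

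Next I would exploit that $(F(\zeta_n))_{n\in\mathbb{N}}$ lies in $Z$ and is bounded there by assumption. In the situation of this paper $Z$ is reflexive (indeed, in the applications it is a Hilbert space of the form $\dom A^\ell$), so every subsequence admits a further weakly convergent subsequence with some weak limit $\eta\in Z$. Applying compactness of the embedding once more, the subsequence converges to $\eta$ in $X$-norm. Comparing with the $X$-norm limit already established, I obtain $\eta=F(\zeta)$ in $X$, and hence as an equality of elements of $Z$. A standard Urysohn-type subsequence argument (every subsequence has a further subsequence converging weakly to the same limit $F(\zeta)$) then forces $F(\zeta_n)\rightharpoonup F(\zeta)$ in $Z$.

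The main obstacle is the second invocation of compactness: we are not directly given any continuity of $F$ into $Z$, only boundedness of $F(C_r(0)^j)$ in $Z$, so the identification of every weak cluster point must go through the $X$-side and rely on reflexivity of $Z$ to extract weakly convergent subsequences. Once this bookkeeping is in place, the proof reduces to the two applications of the compactness of $Z\hookrightarrow X$ together with the continuity of $F\colon X^j\to X$, and no further structural properties of $F$ are required.
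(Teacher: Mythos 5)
Your proof is correct, but the second half runs along a genuinely different track from the paper's. The first step is identical: compactness of $\iota\colon Z\to X$ turns the weak convergence $\zeta_n\rightharpoonup\zeta$ into norm convergence in $X^j$, and continuity of $F$ gives $F(\zeta_n)\to F(\zeta)$ in the $X$-norm. To upgrade this to weak convergence in $Z$, the paper argues by duality: injectivity of $\iota$ gives that $\iota^*(X^*)$ is dense in $Z^*$, and then for each $z^*\in Z^*$ one approximates $z^*\approx\iota^*x^*$ and estimates $\lvert z^*(F(\zeta_n)-F(\zeta))\rvert\le 2C\varepsilon+\lVert x^*\rVert\,\lVert F(\zeta_n)-F(\zeta)\rVert_X$ using the uniform $Z$-bound. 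You instead use the $Z$-boundedness of $(F(\zeta_n))$ together with reflexivity of $Z$ to extract weakly convergent subsequences, identify every weak cluster point with $F(\zeta)$ via the already-established $X$-limit and the injectivity of $\iota$, and conclude by the standard subsequence principle. Both arguments are sound, and in fact both rest on the same unstated hypothesis: the norm-density of $\iota^*(X^*)$ in $Z^*$ that the paper extracts from Brezis holds in general only when $Z$ is reflexive (for a merely injective bounded operator one gets weak-$*$ density of the range of the adjoint, not norm density), so your explicit appeal to reflexivity is not an extra restriction relative to the paper but simply makes visible what the paper uses tacitly; in all applications $Z=\dom A^{\ell}$ is a Hilbert space, so this is harmless. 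The paper's route is more elementary (a direct $\varepsilon$-estimate, no Eberlein--\v{S}mulian), while yours isolates more cleanly why the weak cluster points are forced to coincide. One small point worth making explicit in either version: $\zeta\in C_r(0)^j$ because $C_r(0)$ is convex and norm-closed, hence weakly closed, so $F(\zeta)$ indeed lies in $Z$ and the conclusion is well posed.
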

\begin{proof}
	Denote the compact embedding $\iota\colon Z\to X$, and let $(\zeta_n)_{n\in\mathbb{N}}$, $z_n=(z_{n,1},\dots,z_{n,j})$, converge weakly to $\zeta=(z_1,\cdots,z_j)$ in $Z$.
	By assumption, $\lVert F(\zeta_n)\rVert_{Z}\le C$ for some $C>0$.
	Additionally, $(\zeta_n)_{n\in\mathbb{N}}$ converges to $\zeta$ in the norm of $X^j$.
	The continuity of $F$ on $X$ yields $X$-norm convergence of $(F(\zeta_n))_{n\in\mathbb{N}}$ to $F(\zeta)$.

	As $\iota$ is injective, it follows that its adjoint $\iota^*\colon X^*\to Z^*$ has dense range by \cite[Corollaire II.17(iii)]{Brezis1994}, where $\iota^*$ is given by $(\iota^* x^*)(z)=x^*(\iota z)$ for all $z\in Z$ and $x^*\in X^*$.
	It follows that for every $z^*\in Z^*$ and $\varepsilon>0$, there exists some $x^*\in X^*$ such that $\lVert z^*-\iota x^*\rVert_{L(Z;\mathbb{R})}<\varepsilon$, whence $\lvert z^*(z) - x^*(\iota z)\rvert < \varepsilon$.
	The result follows from the norm convergence of $(F(\zeta_n))_{n\in\mathbb{N}}$ in $X$ and
	\begin{alignat}{2}
		\lvert z^* & (F(\zeta_n) - F(\zeta)) \rvert 
		\le 2C\varepsilon 
		+ \lVert x^*\rVert_{L(X;\mathbb{R})}\cdot\lVert F(\zeta_n) - F(\zeta)\rVert_{X}.
	\end{alignat}
\qed
\end{proof}
\begin{lemma}
	\label{lem:dpzeq-condapplyvf}
	Assume that $\alpha$, $\sigma_j\in\mathrm{C}^{k-2}(X;X)$ with bounded derivatives, and that $0\le\ell\le\ell_0-1$, $2\le s\le s_0-2$ and $k\ge 2$.
	Then, $\mathcal{G}$, $\mathcal{G}_j\colon\mathcal{B}^{\psi_{\ell,s}}_{k}( (\dom A^{\ell})_w )\to\mathcal{B}^{\psi_{\ell+1,s+2}}_{k-2}( (\dom A^{\ell+1})_w )$ are continuous for $j=0,\dots,d$, and
	\begin{equation}
		\sum_{j=0}^{d}\mathcal{G}_j f=\mathcal{G} f
		\quad\text{for all $f\in\mathcal{B}^{\psi_{\ell,s}}_k( (\dom A^{\ell})_w )$}.
	\end{equation}
\end{lemma}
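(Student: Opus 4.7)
The plan is to represent each $\mathcal{G}_j$ as a concrete differential expression in $f$ via Lemmas~\ref{lem:dpzeq-vola}--\ref{lem:dpzeq-full}, and then read off both the mapping property into the next weighted scale and the continuity bound from Theorem~\ref{theorem:Bpsikvfmapping}. The additivity $\sum_{j=0}^{d}\mathcal{G}_j f=\mathcal{G} f$ is then an immediate consequence of adding the individual representations, or equivalently a reformulation of Lemma~\ref{lem:dpzeq-full}.

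For $j=1,\dots,d$, Lemma~\ref{lem:dpzeq-vola} would identify
\begin{equation}
	\mathcal{G}_j f(x) = \tfrac{1}{2} Df(x)\bigl(D\sigma_j(x)\sigma_j(x)\bigr) + \tfrac{1}{2} D^2 f(x)\bigl(\sigma_j(x),\sigma_j(x)\bigr).
\end{equation}
To feed this into Theorem~\ref{theorem:Bpsikvfmapping}, I would classify both $\sigma_j$ and the composed field $D\sigma_j(\cdot)\sigma_j(\cdot)$ as elements of $\mathcal{V}^{k-2}_{2}\bigl((\dom A^{\ell}_{w},\psi_{\ell,s+2});(\dom A^{\ell}_{w},\psi_{\ell,s})\bigr)$. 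Since $\sigma_j\in\mathrm{C}^{k-2}(X;X)$ has bounded derivatives, one can take $\varphi(x):=C(1+\lVert x\rVert_{\dom A^{\ell}})$, and the key admissibility inequality $\psi_{\ell,s}(x)\varphi(x)^{2}\le C\psi_{\ell,s+2}(x)$ is immediate from the very definition of the weights $\psi_{\ell,s}$. Theorem~\ref{theorem:Bpsikvfmapping} then delivers continuity $\mathcal{B}^{\psi_{\ell,s}}_k((\dom A^{\ell})_w)\to\mathcal{B}^{\psi_{\ell,s+2}}_{k-2}((\dom A^{\ell})_w)$, and the inequalities $\lVert x\rVert_{\dom A^{\ell}}\le\lVert x\rVert_{\dom A^{\ell+1}}$ yield a continuous inclusion into $\mathcal{B}^{\psi_{\ell+1,s+2}}_{k-2}((\dom A^{\ell+1})_w)$ as required.

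For $j=0$, Lemma~\ref{lem:dpzeq-drift} gives $\mathcal{G}_0 f(x)=Df(x)(Ax+\alpha_0(x))$, and the vector field $V_0(x):=Ax+\alpha_0(x)$ is genuinely unbounded on $\dom A^{\ell}$ but maps $\dom A^{\ell+1}$ continuously into $\dom A^{\ell}$. I would therefore apply Theorem~\ref{theorem:Bpsikvfmapping} with $Z=\dom A^{\ell+1}$, $X=\dom A^{\ell}$, $\tilde{\psi}=\psi_{\ell+1,s+2}$ and $\psi=\psi_{\ell,s}$, choosing $\varphi(x):=C(1+\lVert x\rVert_{\dom A^{\ell+1}})$; then $\lVert V_0(x)\rVert_{\dom A^{\ell}}\le\varphi(x)$ (and analogously for its derivatives, which are zero beyond order $1$ plus a bounded contribution from $\alpha_0$), while $\psi_{\ell,s}\varphi\le C\psi_{\ell+1,s+1}\le C\psi_{\ell+1,s+2}$. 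This places $V_0$ in $\mathcal{V}^{k-2}_{1}\bigl((\dom A^{\ell+1}_{w},\psi_{\ell+1,s+2});(\dom A^{\ell}_{w},\psi_{\ell,s})\bigr)$ and gives the desired continuity $f\mapsto Df(\cdot)V_0(\cdot)$.

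Adding the representations gives $\sum_{j=0}^{d}\mathcal{G}_j f=\mathcal{G} f$, matching Lemma~\ref{lem:dpzeq-full}, and continuity of $\mathcal{G}$ as a sum of continuous operators. The step I expect to be most delicate is verifying the weak-$*$ continuity clause in the definition of $\mathcal{V}^{\ell}_k$ for the composite fields, in particular for $D\sigma_j(\cdot)\sigma_j(\cdot)$ and for the affine field $V_0$; here norm-continuity of the nonlinearities on $X$ does not directly imply the required joint weak-$*$ continuity on balls of $\dom A^{\ell}$ (resp.\ $\dom A^{\ell+1}$), and one passes from one to the other by invoking Proposition~\ref{prop:dpzeq-weakcontcompactembedding} together with the compactness of the embeddings $\dom A^{\ell+1}\hookrightarrow\dom A^{\ell}\hookrightarrow X$ established earlier.
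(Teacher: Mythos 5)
Your overall architecture is the same as the paper's: identify $\mathcal{G}_0,\dots,\mathcal{G}_d$ and $\mathcal{G}$ as concrete differential expressions via Lemmas~\ref{lem:dpzeq-vola}--\ref{lem:dpzeq-full} (which also gives $\sum_j\mathcal{G}_jf=\mathcal{G}f$ for free), place the vector fields in suitable classes $\mathcal{V}^{k-2}_{j}$, and conclude with Theorem~\ref{theorem:Bpsikvfmapping}. Your treatment of $j=0$ with $Z=\dom A^{\ell+1}$, $X=\dom A^{\ell}$ is exactly what the paper does.

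There is, however, a genuine misstep for $j=1,\dots,d$: you classify $\sigma_j$ and $D\sigma_j(\cdot)\sigma_j(\cdot)$ in $\mathcal{V}^{k-2}_{2}\bigl(((\dom A^{\ell})_{w},\psi_{\ell,s+2});((\dom A^{\ell})_{w},\psi_{\ell,s})\bigr)$, i.e.\ with $Z=X=\dom A^{\ell}$, and only afterwards embed into the $\ell+1$ scale. The growth clause $\psi_{\ell,s}\varphi^2\le C\psi_{\ell,s+2}$ is indeed harmless, but the weak-$*$ continuity clause in the definition of $\mathcal{V}^{\ell}_k$ cannot be verified at that level: a norm-continuous (even Lipschitz) nonlinearity on an infinite-dimensional Hilbert space is in general \emph{not} weakly continuous on bounded sets (think of $x\mapsto\lVert x\rVert v$), and the only tool available, Proposition~\ref{prop:dpzeq-weakcontcompactembedding}, requires a \emph{compact} embedding $Z\subset X$ --- which the identity on $\dom A^{\ell}$ is not. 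You correctly flag this as the delicate point and name the right proposition, but invoking it forces the domain of the vector fields down to $\dom A^{\ell+1}$, i.e.\ it changes your classification to $\sigma_j\in\mathcal{V}^{k-2}_{2}\bigl(((\dom A^{\ell+1})_{w},\psi_{\ell+1,s+2});((\dom A^{\ell})_{w},\psi_{\ell,s})\bigr)$ with $\varphi(x)=(1+\lVert x\rVert_{\dom A^{\ell+1}}^2)^{1/2}$, which is precisely the paper's choice. This is also why the lemma's target space carries the index $\ell+1$ even for the purely diffusive generators $\mathcal{G}_j$, $j\ge 1$, which contain no unbounded operator: the loss of one level in the $\dom A^{\ell}$ scale is the price for weak continuity of the nonlinear coefficients, not for the presence of $A$. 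With that correction your argument closes.
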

\begin{proof}
	Note that $f$ and its derivatives are uniformly continuous on bounded subsets of $\dom A^{\ell}$ as they are weakly compact.
	By the Lemmas~\ref{lem:dpzeq-vola}, \ref{lem:dpzeq-drift} and \ref{lem:dpzeq-full}, it follows that $\mathcal{B}^{\psi_{\ell,s}}_k( (\dom A^{\ell})_w )\subset\dom \mathcal{G}\cap\bigcap_{j=0}^{d}\mathcal{G}_j$, and that for $f\in\mathcal{B}^{\psi_{\ell,s}}( (\dom A^{\ell})_w )$, $\mathcal{G}$ and $\mathcal{G}_j f$ are given by a sum of directional derivatives
	$\alpha$, $\sigma_j$ and their derivatives are norm continuous on $\dom A^{\ell}$ by assumption.
	By the compact embedding $\dom A^{\ell+1}\to\dom A^{\ell}$, it follows by Proposition~\ref{prop:dpzeq-weakcontcompactembedding} that $\alpha$ and $\sigma_j$ are weakly continuous on every bounded set in $\dom A^{\ell+1}$.
	By linear boundedness with bounded derivatives, we can choose $\varphi(x):=\left( 1+\lVert x\rVert_{\dom A^{\ell+1}}^2 \right)^{1/2}$ to obtain 
	\begin{alignat}{2}{}
		\alpha&\in\mathcal{V}^{k-2}_1\Bigl( ( (\dom A^{\ell+1})_w,\psi_{\ell+1,s+2});( (\dom A^{\ell})_w,\psi_{\ell,s}) \Bigr), 
		\quad\text{and} \\
		\sigma_j&\in\mathcal{V}^{k-2}_2\Bigl( ( (\dom A^{\ell+1})_w,\psi_{\ell+1,s+2});( (\dom A^{\ell})_w,\psi_{\ell,s}) \Bigr), 
		\quad
		\text{$j=1,\dots,d$}.
	\end{alignat}
	Using Theorem~\ref{theorem:Bpsikvfmapping}, we see that $\mathcal{G}_j f\in\mathcal{B}^{\psi_{\ell+1,s+2}}_{k-2}( (\dom A^{\ell+1})_w )$.
\qed\end{proof}

\begin{lemma}
	\label{lem:dpzeq-derbound}
	Suppose $\alpha$, $\sigma_j\in\mathrm{C}^k(X;X)$ with bounded derivatives.
	Let $1\le\ell\le\ell_0$, $2\le s\le s_0$ and $k\ge 0$.
	Then, $P_t \mathcal{B}^{\psi_{\ell,s}}_k( (\dom A^{\ell})_w )\subset\mathcal{B}^{\psi_{\ell,s}}_k( (\dom A^{\ell})_w )$, and $\sup_{t\in[0,T]}\lVert P_t f\rVert_{\psi_{\ell,s},k}\le K_T\lVert f\rVert_{\psi_{\ell,s},k}$ with some constant $K_T$ independent of $f$. 
\end{lemma}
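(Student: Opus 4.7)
The plan is to induct on $k$. The base case $k=0$ is exactly Theorem~\ref{theorem:dpzeq-pseudocontractivity}, which gives $\lVert P_t f\rVert_{\psi_{\ell,s}}\le\exp(\omega t)\lVert f\rVert_{\psi_{\ell,s}}$. For $k\ge 1$, the central step I would carry out first is to show that under the hypothesis $\alpha,\sigma_j\in\mathrm{C}^k(X;X)$ with bounded derivatives, the stochastic flow $x_0\mapsto x(t,x_0)$ is $k$-times Fr\'echet differentiable as a map $\dom A^\ell\to\dom A^\ell$, and that its derivatives $D_{x_0}^j x(t,x_0)$ solve the linear mild SDEs obtained by formal differentiation of \eqref{eq:dpzspde}. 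Since the coefficients of these variational equations depend multilinearly on the directional arguments and only involve bounded derivatives of $\alpha$ and $\sigma_j$ composed with $x(\cdot,x_0)$, applying \cite[Theorem~7.3.5]{DaPratoZabczyk2002} recursively, combined with the Sz\H{o}kefalvi-Nagy dilation trick used in the proof of Theorem~\ref{theorem:dpzeq-pseudocontractivity} to handle the unboundedness of $A$, yields uniform $L^p$-moment bounds
\begin{equation}
\sup_{x_0\in\dom A^\ell}\sup_{t\in[0,T]}\mathbb{E}\bigl[\lVert D_{x_0}^j x(t,x_0)\rVert_{L((\dom A^\ell)^{\otimes j};\dom A^\ell)}^p\bigr]\le K_{T,p,j}
\end{equation}
for every $p\ge 1$ and $j=1,\dots,k$.

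Given these flow estimates, differentiating $P_t f(x_0)=\mathbb{E}[f(x(t,x_0))]$ under the expectation and applying the Fa\`a di Bruno formula yields
\begin{equation}
D^j(P_t f)(x_0)(h_1,\dots,h_j)=\sum_{\pi}\mathbb{E}\Bigl[D^{\lvert\pi\rvert}f(x(t,x_0))\bigl(D^{\lvert B\rvert}x(t,x_0)(h_B)\bigr)_{B\in\pi}\Bigr],
\end{equation}
where the sum runs over all partitions $\pi$ of $\{1,\dots,j\}$ into nonempty blocks $B$ and $h_B$ denotes the tuple of $h_i$ with $i\in B$. Using the pointwise estimate $\lvert D^{\lvert\pi\rvert}f(y)\rvert\le\lvert f\rvert_{\psi_{\ell,s},\lvert\pi\rvert}\,\psi_{\ell,s}(y)$, H\"older's inequality, the uniform flow-derivative bounds above, and the moment bound $\mathbb{E}[\psi_{\ell,s}(x(t,x_0))^q]\le K_{T,q}\psi_{\ell,s}(x_0)^q$ (which is Theorem~\ref{theorem:dpzeq-pseudocontractivity} applied with weight $\psi_{\ell,sq}$), one obtains
\begin{equation}
\psi_{\ell,s}(x_0)^{-1}\lvert D^j(P_t f)(x_0)(h_1,\dots,h_j)\rvert\le K_T\lVert f\rVert_{\psi_{\ell,s},k}\prod_{i=1}^{j}\lVert h_i\rVert,
\end{equation}
uniformly in $t\in[0,T]$, which is the desired seminorm estimate $\lvert P_t f\rvert_{\psi_{\ell,s},j}\le K_T\lVert f\rVert_{\psi_{\ell,s},k}$ for $j=1,\dots,k$.

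To confirm that $P_t f$ actually lies in $\mathcal{B}^{\psi_{\ell,s}}_k((\dom A^\ell)_w)$, I still need to verify the decay at infinity and the weak-$*$ continuity of the derivatives on bounded sets. The decay $\lim_{R\to\infty}\sup_{x_0\notin K_R}\psi_{\ell,s}(x_0)^{-1}\lVert D^j(P_t f)(x_0)\rVert_{L((\dom A^\ell)^{\otimes j};\mathbb{R})}=0$ follows by splitting the expectation on $\{\psi_{\ell,s}(x(t,x_0))\le R'\}$ and its complement, using Markov's inequality and Theorem~\ref{theorem:Bdecay} applied to the derivatives of $f$. The required sequential weak-$*$ continuity on $C_r(0)\times C_1(0)^j$ follows by dominated convergence from the analogous continuity of the flow and its derivatives, exploiting the compact embedding $\dom A^{\ell+1}\hookrightarrow\dom A^\ell$ precisely as in the proof of Theorem~\ref{theorem:strongcontprocesscompact}, together with norm continuity coming from the Lipschitz dependence on the initial condition.

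The main obstacle is the very first step: establishing existence of the variational processes $D_{x_0}^j x(t,x_0)$ on the Hilbert scale $\dom A^\ell$ with uniform $L^p$-bounds. Once these are in place, the remainder is a combination of Fa\`a di Bruno bookkeeping with tools already developed in the paper. The dilation argument from Theorem~\ref{theorem:dpzeq-pseudocontractivity}, applied inductively to the linear variational equations, is what makes the bounds uniform in $x_0$ despite the unboundedness of $A$.
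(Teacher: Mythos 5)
Your argument is essentially the paper's: differentiate $P_t f(x_0)=\mathbb{E}[f(x(t,x_0))]$ under the expectation, bound the derivatives of the flow, apply the chain rule, and obtain the weak-$*$ continuity of the derivatives on bounded sets from the compact embedding via Proposition~\ref{prop:dpzeq-weakcontcompactembedding}. The only divergence is at the step you identify as the main obstacle: rather than constructing the variational processes with $L^p$-moment bounds via the dilation trick and then applying H\"older, the paper simply invokes \cite[Theorem~5.4.1]{DaPratoZabczyk1996} and \cite[Theorem~7.3.6]{DaPratoZabczyk2002}, which already give an almost surely uniform bound $\lVert D^j_{x_0}x(t,x_0)\rVert_{L((\dom A^{\ell})^{\otimes j};\dom A^{\ell})}\le C$ together with almost sure norm continuity in $x_0$, so the flow derivatives can be pulled out of the expectation deterministically and only the moment bound on $\psi_{\ell,s}(x(t,x_0))$ itself is needed.
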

\begin{proof}
	The results in \cite[Theorem~5.4.1]{DaPratoZabczyk1996} and \cite[Theorem~7.3.6]{DaPratoZabczyk2002} prove existence of $C>0$ such that $\lVert D_x^j X^x_t\rVert_{L( (\dom A^{\ell})^{\otimes j};\dom A^{\ell})}\le C$ almost surely for all $x\in\dom A^{\ell}$ and $j=1,\cdots,k$, and that these mappings are almost surely norm continuous in $x$.
	By the compact embedding, almost sure sequential weak continuity on bounded sets of $\dom A^{\ell}$ follows from Proposition~\ref{prop:dpzeq-weakcontcompactembedding}.
	We obtain 
	\begin{alignat}{2}
		\lvert DP_t f(x_0)(x_1)\rvert
		&\le
		\lVert x_1\rVert\mathbb{E}[\lVert Dx(t,x_0)\rVert_{L(\dom A^{\ell};\dom A^{\ell})}\times \\
		&\phantom{\le}\times\lVert Df(x(t,x_0))\rVert_{L(\dom A^{\ell};\mathbb{R})}] \notag \\
		&\le
		C_t\lvert f\rvert_{\psi_{\ell,s},1}\psi_{\ell,s}(x)\lVert x_1\rVert
	\end{alignat}
	with some constant $C_t$ independent of $x$ and $f$, and similarly for higher derivatives.
\qed\end{proof}

\begin{theorem}
	Assume that $\alpha$, $\sigma_j\in\mathrm{C}^6(X;X)$ with bounded derivatives, that $\ell_0\ge 4$ and that $s_0\ge 8$.
	Then, for $0\le\ell\le\ell_0-4$ and $2\le s\le s_0-6$, $\mathcal{B}^{\psi_{\ell,s}}_6( (\dom A^{\ell})_w )\subset\mathcal{M}^{\mathrm{NV}}_T$.
	In particular, $\mathrm{C}_b^6(X)\subset\mathcal{M}^{\mathrm{NV}}_T$.
\end{theorem}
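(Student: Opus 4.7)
The plan is to unpack the definition of $\mathcal{M}_T^{\mathrm{NV}}=\mathcal{M}_T^3$ and verify its three clauses by iterating the two structural lemmas already proved, namely Lemma~\ref{lem:dpzeq-derbound} on the invariance of $\mathcal{B}^{\psi_{\ell,s}}_k$ under $P_t$, and Lemma~\ref{lem:dpzeq-condapplyvf} on how the generators $\mathcal{G}$ and $\mathcal{G}_j$ act between these spaces. Given $f\in\mathcal{B}^{\psi_{\ell,s}}_6((\dom A^\ell)_w)$, Lemma~\ref{lem:dpzeq-derbound} (with $k=6$, using $\alpha,\sigma_j\in\mathrm{C}^6$ with bounded derivatives) produces $P_tf\in\mathcal{B}^{\psi_{\ell,s}}_6((\dom A^\ell)_w)$ with a norm bound uniform in $t\in[0,T]$. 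Since $\psi_{\ell,s}(x)\le\psi_{\ell_0,s_0}(x)$ for $x\in\dom A^{\ell_0}$ whenever $\ell\le\ell_0$ and $s\le s_0$, the continuous embedding $\mathcal{B}^{\psi_{\ell,s}}((\dom A^\ell)_w)\hookrightarrow\mathcal{B}^{\psi_{\ell_0,s_0}}((\dom A^{\ell_0})_w)$ holds, and the first clause of the definition of $\mathcal{M}_T^3$ is in place.

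For the second clause I iterate Lemma~\ref{lem:dpzeq-condapplyvf} three times: each generator application costs two units of smoothness, one unit in $\ell$, and two in $s$, so three successive applications yield
\begin{equation*}
\mathcal{G}_{j_1}\mathcal{G}_{j_2}\mathcal{G}_{j_3}P_t f \in \mathcal{B}^{\psi_{\ell+3,s+6}}_0\bigl((\dom A^{\ell+3})_w\bigr)
\quad\text{for each $j_1,j_2,j_3\in\{0,\dots,d\}$.}
\end{equation*}
The margins $\ell\le\ell_0-4$ and $s\le s_0-6$ are tailored so that each intermediate index $(\ell+i,s+2i)$, $i=0,1,2$, stays in the admissible range of Lemma~\ref{lem:dpzeq-condapplyvf} and so that the terminal space embeds continuously into $\mathcal{B}^{\psi_{\ell_0,s_0}}((\dom A^{\ell_0})_w)$. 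Combined with the uniform $\lVert P_tf\rVert_{\psi_{\ell,s},6}$ bound from the first step and the continuity of each generator in Lemma~\ref{lem:dpzeq-condapplyvf}, this yields
\begin{equation*}
\sup_{\substack{t\in[0,T]\\ j_1,j_2,j_3=0,\dots,d}} \lVert \mathcal{G}_{j_1}\mathcal{G}_{j_2}\mathcal{G}_{j_3} P_t f\rVert_{\psi_{\ell_0,s_0}} < \infty.
\end{equation*}
The identity $\mathcal{G}^i P_tf=\bigl(\sum_j\mathcal{G}_j\bigr)^iP_tf$ for $i=1,2,3$ then follows by induction on $i$ using the splitting identity from the second assertion of Lemma~\ref{lem:dpzeq-condapplyvf}: at each step the intermediate function $\mathcal{G}^{i-1}P_tf$ lies in a $\mathcal{B}^{\psi_{\cdot,\cdot}}_k$ space with $k=6-2(i-1)\ge 2$, where $\mathcal{G}$ coincides with $\sum_j\mathcal{G}_j$.

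This establishes $f\in\mathcal{M}_T^3=\mathcal{M}_T^{\mathrm{NV}}$. For the \emph{in particular} clause, any $f\in\mathrm{C}_b^6(X)$ belongs to $\mathcal{B}^{\psi_{\ell,s}}_6((\dom A^\ell)_w)$ whenever $\ell\ge 1$ and $s\ge 2$, because the compact embedding $\dom A^\ell\hookrightarrow X$ combined with Proposition~\ref{prop:dpzeq-weakcontcompactembedding} promotes norm continuity of $f$ and of its derivatives into weak-$*$ continuity on bounded sets of $\dom A^\ell$, while the decay condition \eqref{eq:Bdecay} is immediate from boundedness; choosing $\ell$ and $s$ admissible under $\ell_0\ge 4$ and $s_0\ge 8$ completes the proof. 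The main obstacle worth flagging is simply the careful bookkeeping of the three indices $(k,\ell,s)$ along the three-step chain so that $k\ge 2$ at every step (needed for Lemma~\ref{lem:dpzeq-condapplyvf} to be applicable again) and the $(\ell,s)$-increments preserve admissibility; the choices $k=6=3\cdot 2$ and the margins $\ell_0-4$, $s_0-6$ are tuned precisely for this purpose.
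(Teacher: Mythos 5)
Your proposal is correct and follows essentially the same route as the paper: Lemma~\ref{lem:dpzeq-derbound} for the uniform bound on $P_tf$ in $\mathcal{B}^{\psi_{\ell,s}}_6$, three iterations of Lemma~\ref{lem:dpzeq-condapplyvf} for the generator clauses, and the compact-embedding/weak-continuity argument for $\mathrm{C}_b^6(X)$. Your explicit bookkeeping of the $(k,\ell,s)$-chain and of the embedding into $\mathcal{B}^{\psi_{\ell_0,s_0}}((\dom A^{\ell_0})_w)$ is in fact more detailed than the paper's one-line ``follows by iterating'' treatment.
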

\begin{proof}
	By Lemma~\ref{lem:dpzeq-derbound}, $\lVert P_t f\rVert_{\psi,6}\le K_T\lVert f\rVert_{\psi,6}<\infty$ for all $t\in[0,T]$.
	The first claim follows by iterating Lemma~\ref{lem:dpzeq-condapplyvf}.

	For the second claim, let $f\in\mathrm{C}_b^6(X)$.
	$f\in\mathrm{C}^6(\dom A^{\ell})$ is obvious.
	By the compact embedding $\dom A^{\ell}\to X$, $f$ has weakly continuous derivatives on bounded sets of $\dom A^{\ell}$, and Lemma~\ref{lem:Bcharseqwstarcont} proves $f\in\mathcal{B}^{\psi}( (\dom A^{\ell})_{w} )$. 
	Boundedness of the derivatives shows $\lvert f\rvert_{\psi,j}<\infty$ and 
	\begin{equation}
		\lim_{R\to\infty}\sup_{\psi_{\ell,s}(x)>R}\psi(x)^{-1}\lVert D^j f(x)\rVert_{L( (\dom A^{\ell})^j;\mathbb{R} )}=0.
	\end{equation}
	Hence, $f\in\mathcal{B}^{\psi_{\ell,s}}_6( (\dom A^{\ell})_w )$.
\qed\end{proof}
The following theorem follows analogously.
\begin{theorem}
	Assume that $\alpha$, $\sigma_j\in\mathrm{C}^4(X;X)$ with bounded derivatives, that $\ell_0\ge 3$ and that $s_0\ge 6$.
	Then, for $0\le\ell\le\ell_0-3$ and $2\le s\le s_0-4$, $\mathcal{B}^{\psi_{\ell,s}}_4( (\dom A^{\ell})_w )\subset\mathcal{M}^{\mathrm{Euler}}_T$.
	In particular, $\mathrm{C}_b^4(X)\subset\mathcal{M}^{\mathrm{Euler}}_T$.
\end{theorem}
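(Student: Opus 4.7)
The plan is to mirror the proof of the preceding Ninomiya-Victoir theorem, with the only structural difference being that $\mathcal{M}^{\mathrm{Euler}}_T=\mathcal{M}_T^2$ requires two generator iterations rather than three. Given $f\in\mathcal{B}^{\psi_{\ell,s}}_4((\dom A^{\ell})_w)$, I would first invoke Lemma~\ref{lem:dpzeq-derbound}, whose hypotheses are satisfied since $\alpha,\sigma_j\in\mathrm{C}^4(X;X)$ with bounded derivatives, to obtain $\sup_{t\in[0,T]}\lVert P_t f\rVert_{\psi_{\ell,s},4}\le K_T\lVert f\rVert_{\psi_{\ell,s},4}$. This ensures the trajectory $t\mapsto P_t f$ stays uniformly bounded in the $k=4$ space.

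Next, I would apply Lemma~\ref{lem:dpzeq-condapplyvf} twice. The first pass, with parameters $(\ell,s,k)=(\ell,s,4)$, needs $\ell\le\ell_0-1$, $s\le s_0-2$ and $\alpha,\sigma_j\in\mathrm{C}^2$, all satisfied by the theorem's hypotheses, and yields continuous maps $\mathcal{G},\mathcal{G}_j\colon\mathcal{B}^{\psi_{\ell,s}}_4((\dom A^{\ell})_w)\to\mathcal{B}^{\psi_{\ell+1,s+2}}_2((\dom A^{\ell+1})_w)$ together with the identity $\sum_{j=0}^d\mathcal{G}_j=\mathcal{G}$. The second pass, starting from $(\ell+1,s+2,2)$, requires $\ell+1\le\ell_0-1$ and $s+2\le s_0-2$, both supplied by $\ell\le\ell_0-3$ and $s\le s_0-4$, and lands in $\mathcal{B}^{\psi_{\ell+2,s+4}}_0=\mathcal{B}^{\psi_{\ell+2,s+4}}((\dom A^{\ell+2})_w)$, again preserving the additive decomposition. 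Composing gives $\mathcal{G}_{j_1}\mathcal{G}_{j_2}P_t f\in\mathcal{B}^{\psi_{\ell+2,s+4}}((\dom A^{\ell+2})_w)$ with norm dominated by $CK_T\lVert f\rVert_{\psi_{\ell,s},4}$, uniformly in $t\in[0,T]$ and $j_1,j_2\in\{0,\dots,d\}$, and $\mathcal{G}^2 P_t f=\sum_{j_1,j_2}\mathcal{G}_{j_1}\mathcal{G}_{j_2}P_t f$. Since $\ell+2\le\ell_0$ and $s+4\le s_0$ imply $\psi_{\ell+2,s+4}(x)\le\psi_{\ell_0,s_0}(x)$ for $x\in\dom A^{\ell_0}$, the requisite $\psi_{\ell_0,s_0}$-bound defining $\mathcal{M}_T^2$ follows at once; the other two conditions in the definition are then immediate.

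For the ``in particular'' assertion, I would verify $\mathrm{C}_b^4(X)\subset\mathcal{B}^{\psi_{\ell,s}}_4((\dom A^{\ell})_w)$: boundedness of $f$ together with $\psi_{\ell,s}\ge 1$ gives finite seminorms $\lvert f\rvert_{\psi_{\ell,s},j}$ and the required $\psi_{\ell,s}^{-1}$-decay at infinity, while the compact embedding $\dom A^{\ell}\hookrightarrow X$ converts weak-$\ast$ convergence in $\dom A^{\ell}$ into norm convergence in $X$, so that the derivatives of $f$, being norm-continuous on $X$, are weak-$\ast$ continuous on bounded subsets of $\dom A^{\ell}$. Conceptually there is no real obstacle; the only delicate point is the index bookkeeping that ensures the hypotheses of Lemma~\ref{lem:dpzeq-condapplyvf} are met at both steps, and this is precisely where the gap constraints $\ell_0\ge 3$, $s_0\ge 6$, $\ell\le\ell_0-3$, $s\le s_0-4$ enter.
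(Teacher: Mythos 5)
Your proposal is correct and follows essentially the same route as the paper, which simply notes that the Euler case "follows analogously" to the Ninomiya--Victoir theorem: apply Lemma~\ref{lem:dpzeq-derbound} to control $\lVert P_t f\rVert_{\psi_{\ell,s},4}$ uniformly, iterate Lemma~\ref{lem:dpzeq-condapplyvf} (twice, since $\mathcal{M}^{\mathrm{Euler}}_T=\mathcal{M}_T^2$), and use the compact embedding plus Lemma~\ref{lem:Bcharseqwstarcont} for the $\mathrm{C}_b^4(X)$ inclusion. Your explicit index bookkeeping $(\ell,s,4)\to(\ell+1,s+2,2)\to(\ell+2,s+4,0)$ is exactly the computation the paper leaves implicit.
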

\begin{corollary}
	\label{cor:strongconvergence}
	Let $f\in\mathcal{B}^{\psi}(X_{w*})$.
	Then, for any $t>0$,
	\begin{equation}
		\lim_{n\to\infty}\lVert P_t f-(Q_{(t/n)}^{\mathrm{Euler}})^n f\rVert_{\psi}
		=\lim_{n\to\infty}\lVert P_t f-(Q_{(t/n)}^{\mathrm{NV}})^n f\rVert_{\psi}
		=0,
	\end{equation}
	that is, the Euler and Ninomiya-Victoir splittings converge strongly on the space $\mathcal{B}^{\psi}(X_{w*})$.
\end{corollary}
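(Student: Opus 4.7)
The plan is a standard density plus uniform-boundedness argument built on the preceding two corollaries. I would carry it out in three steps.

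First, I would establish that $\mathcal{M}^{\mathrm{Euler}}_T$ and $\mathcal{M}^{\mathrm{NV}}_T$ are dense in $\mathcal{B}^{\psi}(X_{w*})$. The bounded smooth cylinder functions lie in $\mathrm{C}_b^\infty(X)$, hence in $\mathrm{C}_b^6(X)$, which by the preceding theorems is contained both in $\mathcal{M}^{\mathrm{Euler}}_T$ and $\mathcal{M}^{\mathrm{NV}}_T$. By Theorem~\ref{theorem:boundedweakcontapprox} (or Theorem~\ref{theorem:boundedweakcontapproxsep}) such cylinder functions are dense in $\mathcal{B}^{\psi}(X_{w*})$, so both $\mathcal{M}^{\mathrm{Euler}}_T$ and $\mathcal{M}^{\mathrm{NV}}_T$ are dense as well.

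Second, I would record that the iterated splitting operators are uniformly bounded. From Theorem~\ref{theorem:dpzeq-pseudocontractivity} and the analogous bounds for the split semigroups we have $\lVert P^j_s\rVert_{L(\mathcal{B}^{\psi})} \le \exp(\omega_j s)$ for $j=0,\dots,d$, so using the explicit product structure of $Q^{\mathrm{Euler}}_{(\Delta t)}$ and $Q^{\mathrm{NV}}_{(\Delta t)}$ one gets $\lVert Q_{(\Delta t)}\rVert \le \exp(\Omega\,\Delta t)$ with $\Omega:=\sum_{j=0}^{d}\omega_j$. Consequently, for $n\Delta t\le T$,
\begin{equation}
\lVert Q_{(\Delta t)}^n\rVert_{L(\mathcal{B}^{\psi}(X_{w*}))} \le \exp(\Omega T),
\qquad
\lVert P_t\rVert_{L(\mathcal{B}^{\psi}(X_{w*}))} \le \exp(\omega T),
\end{equation}
and we may fix a common bound $M:=\exp(\Omega T)+\exp(\omega T)$.

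Third, I would conclude by a three-$\varepsilon$ argument. Given $f\in\mathcal{B}^{\psi}(X_{w*})$ and $\varepsilon>0$, choose $g\in\mathcal{M}^{\mathrm{NV}}_T$ with $\lVert f-g\rVert_{\psi}<\varepsilon/(2M)$. The triangle inequality gives
\begin{equation}
\lVert P_t f-(Q^{\mathrm{NV}}_{(t/n)})^n f\rVert_{\psi}
\le
M\lVert f-g\rVert_{\psi}
+\lVert P_t g-(Q^{\mathrm{NV}}_{(t/n)})^n g\rVert_{\psi},
\end{equation}
where the first summand is at most $\varepsilon/2$ uniformly in $n$, while the second is bounded by $C_g(t/n)^2$ and tends to zero as $n\to\infty$ by the preceding Corollary. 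Since $\varepsilon$ is arbitrary, the limit is zero, and the same argument with $\mathcal{M}^{\mathrm{Euler}}_T$ in place of $\mathcal{M}^{\mathrm{NV}}_T$ handles the Euler splitting. No genuine obstacle is expected: the only subtle ingredient is the uniform power-boundedness of the scheme, which in the present pseudocontractive setting is an immediate consequence of Theorem~\ref{theorem:dpzeq-pseudocontractivity}.
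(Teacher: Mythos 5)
Your proposal is correct and follows essentially the same route as the paper: the paper's one-line proof invokes the density of bounded smooth cylinder functions (Remark~\ref{rem:smoothcylinderfunctionsdenseBk}), which lie in $\mathrm{C}_b^6(X)\subset\mathcal{M}^{\mathrm{NV}}_T$ and $\mathrm{C}_b^4(X)\subset\mathcal{M}^{\mathrm{Euler}}_T$ by the preceding theorems, and tacitly uses the uniform power-boundedness of the splitting operators together with the telescoping argument from the introduction. You have simply written out the three-$\varepsilon$ details that the paper leaves implicit.
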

\begin{proof}
	This follows from the density of bounded, smooth, cylindrical functions in $\mathcal{B}^{\psi}(X_{w*})$, see Remark~\ref{rem:smoothcylinderfunctionsdenseBk}.
\qed\end{proof}
\begin{example}
	Assume that $\alpha\equiv 0$ and that the $\sigma_j$ are constant, $j=1,\dots,d$.
	This includes, in particular, stochastic heat and wave equations on bounded domains with additive noise.
	It is easy to see that if $A\colon\dom A\to X$ admits a compact resolvent, we are in the situation described above, and the Ninomiya-Victoir splitting converges of optimal order.
\end{example}
\begin{example}
	Note that finite-dimensional problems with Lipschitz-con\-ti\-nu\-ous coefficients are also included in this setting.
	Here, $A$ can be chosen to be zero, and the embedding is trivially compact due to the local compactness of finite-dimensional spaces.
\end{example}

\section{An Example: The Heath-Jarrow-Morton Equation Of Interest Rate Theory}
With $\alpha\in\mathbb{R}$ and $w_\alpha:=\exp(\alpha x)$, we set $\mathrm{L}_{\alpha}^2(\mathbb{R}_+):=\mathrm{L}^2(\mathbb{R}_+,w_\alpha)$ and $\mathrm{H}_{\alpha}^k(\mathbb{R}_+):=\mathrm{H}^k(\mathbb{R}_+,w_\alpha)$.
Here and in the following, $\mathbb{R}_+:=(0,\infty)$.
\begin{proposition}
	\label{prop:hjm-H1alphacompact}
	For every $\alpha>0$, the space $\mathrm{H}^1(\mathbb{R}_+)\cap\mathrm{L}_{\alpha}^2(\mathbb{R}_+)$ with norm given by 
	\begin{equation}
		\lVert f\rVert
		:=
		\left(\lVert f\rVert_{\mathrm{H}^1(\mathbb{R}_+)}^2+\lVert f\rVert_{\mathrm{L}_{\alpha}^2(\mathbb{R}_+)}^2\right)^{1/2}
	\end{equation}
	is compactly embedded in $\mathrm{L}^2(\mathbb{R}_+)$.
\end{proposition}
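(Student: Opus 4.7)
The plan is the standard tail/local-compactness splitting: the weighted $\mathrm{L}^2_{\alpha}$ norm controls the tail at infinity, while the $\mathrm{H}^1$ norm provides local compactness via the classical Rellich--Kondrachov theorem on bounded intervals.

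Concretely, I would take a bounded sequence $(f_n)_{n\in\mathbb{N}}$ in $E:=\mathrm{H}^1(\mathbb{R}_+)\cap\mathrm{L}^2_{\alpha}(\mathbb{R}_+)$, with uniform bound $\lVert f_n\rVert\le M$. For each integer $R\ge 1$, the restriction map $E\to\mathrm{H}^1((0,R))$ is continuous, and by \cite[Th\'eor\`eme IX.16]{Brezis1994} the embedding $\mathrm{H}^1((0,R))\hookrightarrow\mathrm{L}^2((0,R))$ is compact. Hence we can extract a subsequence converging in $\mathrm{L}^2((0,R))$. Performing this extraction for $R=1,2,3,\dots$ and passing to the diagonal subsequence, which I still denote by $(f_n)_{n\in\mathbb{N}}$, we obtain a function $f\in\mathrm{L}^2_{\mathrm{loc}}(\mathbb{R}_+)$ such that $f_n\to f$ in $\mathrm{L}^2((0,R))$ for every $R>0$.

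The key step is the tail estimate. For any $g\in\mathrm{L}^2_{\alpha}(\mathbb{R}_+)$ and $R>0$,
\begin{equation}
\int_{R}^{\infty}\lvert g(x)\rvert^2\dd x
=\int_{R}^{\infty}\exp(-\alpha x)\lvert g(x)\rvert^2\exp(\alpha x)\dd x
\le \exp(-\alpha R)\lVert g\rVert_{\mathrm{L}^2_{\alpha}(\mathbb{R}_+)}^2.
\end{equation}
Applied to $g=f_n-f_m$, this yields $\int_{R}^{\infty}\lvert f_n-f_m\rvert^2\dd x\le 4M^2\exp(-\alpha R)$, uniformly in $n$, $m$. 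Hence given $\varepsilon>0$, I first pick $R$ so large that the tail contribution is below $\varepsilon/2$, and then $N$ so large that $\lVert f_n-f_m\rVert_{\mathrm{L}^2((0,R))}^2<\varepsilon/2$ for $n$, $m\ge N$. This shows that $(f_n)_{n\in\mathbb{N}}$ is Cauchy in $\mathrm{L}^2(\mathbb{R}_+)$, hence convergent, which proves compactness of the embedding $E\hookrightarrow\mathrm{L}^2(\mathbb{R}_+)$.

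There is no real obstacle; the only subtlety is that neither $\mathrm{H}^1(\mathbb{R}_+)$ nor $\mathrm{L}^2_{\alpha}(\mathbb{R}_+)$ embeds compactly into $\mathrm{L}^2(\mathbb{R}_+)$ on its own (the former lacks tail control because $\alpha=0$, the latter lacks local compactness), and it is precisely the combination of the two norms that yields both features simultaneously, through the exponential weight in the tail estimate.
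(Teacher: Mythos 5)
Your proof is correct, and it rests on the same two-ingredient strategy as the paper's: the $\mathrm{H}^1$ norm supplies local compactness and the $\mathrm{L}_{\alpha}^2$ norm supplies uniform tail decay, your tail estimate $\int_{R}^{\infty}\lvert g(x)\rvert^2\dd x\le\exp(-\alpha R)\lVert g\rVert_{\mathrm{L}_{\alpha}^2(\mathbb{R}_+)}^2$ being literally the one in the paper. The difference lies in how the local part is implemented. The paper extracts no subsequences at all: it invokes the Riesz--Fr\'echet--Kolmogorov compactness criterion \cite[Th\'eor\`eme~IV.26]{Brezis1994} on the whole half-line and verifies its two hypotheses directly, namely the tail condition above and the uniform translation estimate
\begin{equation}
	\int_{\mathbb{R}_+}\lvert f(x+\tau)-f(x)\rvert^2\dd x
	\le
	\tau\,\lVert f\rVert_{\mathrm{H}^1(\mathbb{R}_+)}^2,
\end{equation}
which follows from $f(x+\tau)-f(x)=\int_{0}^{\tau}f'(x+s)\dd s$, Cauchy--Schwarz and Fubini. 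You instead apply Rellich--Kondrachov \cite[Th\'eor\`eme~IX.16]{Brezis1994} on each bounded interval $(0,R)$ and glue the extractions by a diagonal argument before adding the same tail bound. Both routes are standard and of comparable length; the paper's has the small advantage that the two displayed estimates are exactly the hypotheses of a single criterion (which is also why the authors can remark that the result persists for any weight $w$ with $w(x)\to+\infty$), whereas yours is more self-contained for a reader who only has compactness on bounded domains at hand. There is no gap in your argument.
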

Note that the proof shows that an analogous result holds true for any weight function $w$ with $\lim_{x\to+\infty}w(x)=+\infty$.
\begin{proof}
	We apply \cite[Th\'eor\`eme IV.26]{Brezis1994}.
	For any $\tau>0$,
	\begin{alignat}{2}
		\int_{\mathbb{R}_+}\lvert f(x+\tau)-f(x)\rvert^2\dd x
		&\le
		\int_{\mathbb{R}_+}\int_{0}^{\tau}\lvert f'(x+s)\rvert^2\dd s\dd x \notag \\
		&=
		\int_{0}^{\tau}\int_{\mathbb{R}_+}\lvert f'(x+s)\rvert^2\dd x \dd s \notag \\
		&\le
		\tau\lVert f\rVert_{\mathrm{H}^1(\mathbb{R}_+)},
	\end{alignat}
	and for any $R>0$,
	\begin{alignat}{2}{}
		\int_{R}^{\infty}\lvert f(x)\rvert^2\dd x
		&\le
		\exp(-\alpha R)\int_{R}^{\infty}\lvert f(x)\rvert^2\exp(\alpha x)\dd x \notag \\
		&\le
		\exp(-\alpha R)\lVert f\rVert_{\mathrm{L}_{\alpha}^2(\mathbb{R}_+)}.
	\end{alignat}
	These estimates prove the claim.
\qed\end{proof}
\begin{corollary}
	\label{cor:hjm-HbetaHalphacompact}
	For any $\alpha$, $\beta\in\mathbb{R}$ with $\beta>\alpha$ and integer $k\ge 0$, $\mathrm{H}_{\beta}^{k+1}(\mathbb{R}_+)$ is compactly embedded in $\mathrm{H}_{\alpha}^{k}(\mathbb{R}_+)$.
\end{corollary}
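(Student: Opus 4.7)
The plan is to reduce to the case already settled in Proposition~\ref{prop:hjm-H1alphacompact} by two successive reductions: first from general $k$ to $k=0$, and then from general $\alpha$ to $\alpha=0$.

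\textbf{Step 1: reduction to $k=0$.} Suppose the result has been established for $k=0$, that is, $\mathrm{H}^1_{\gamma}(\mathbb{R}_+)$ embeds compactly in $\mathrm{L}^2_{\delta}(\mathbb{R}_+)$ whenever $\gamma>\delta$. Given a sequence $(f_n)_{n\in\mathbb{N}}$ bounded in $\mathrm{H}^{k+1}_{\beta}(\mathbb{R}_+)$, each derivative $(f_n^{(j)})_{n\in\mathbb{N}}$, $j=0,\dots,k$, is bounded in $\mathrm{H}^{1}_{\beta}(\mathbb{R}_+)$. Applying the $k=0$ result with the pair $(\beta,\alpha)$ successively to each $j$ and extracting a diagonal subsequence, I obtain a subsequence along which $f_n^{(j)}$ converges in $\mathrm{L}^2_{\alpha}(\mathbb{R}_+)$ for every $j=0,\dots,k$, i.e.\ the subsequence converges in $\mathrm{H}^{k}_{\alpha}(\mathbb{R}_+)$.

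\textbf{Step 2: reduction to $\alpha=0$.} I introduce the multiplicative map $T_{\alpha}f(x):=\exp(\alpha x/2)f(x)$. A direct computation shows
\begin{equation}
	\lVert T_{\alpha}f\rVert_{\mathrm{L}^2(\mathbb{R}_+)}=\lVert f\rVert_{\mathrm{L}^2_{\alpha}(\mathbb{R}_+)},
	\qquad
	\lVert T_{\alpha}f\rVert_{\mathrm{L}^2_{\beta-\alpha}(\mathbb{R}_+)}=\lVert f\rVert_{\mathrm{L}^2_{\beta}(\mathbb{R}_+)},
\end{equation}
and, using $(T_{\alpha}f)'=T_{\alpha}(f'+\tfrac{\alpha}{2}f)$,
\begin{equation}
	\lVert (T_{\alpha}f)'\rVert_{\mathrm{L}^2_{\beta-\alpha}(\mathbb{R}_+)}\le\lVert f'\rVert_{\mathrm{L}^2_{\beta}(\mathbb{R}_+)}+\tfrac{\lvert\alpha\rvert}{2}\lVert f\rVert_{\mathrm{L}^2_{\beta}(\mathbb{R}_+)},
\end{equation}
so that $T_{\alpha}\colon\mathrm{H}^{1}_{\beta}(\mathbb{R}_+)\to\mathrm{H}^{1}_{\beta-\alpha}(\mathbb{R}_+)$ is a topological isomorphism and $T_{\alpha}\colon\mathrm{L}^2_{\alpha}(\mathbb{R}_+)\to\mathrm{L}^2(\mathbb{R}_+)$ is an isometry. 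Hence the compactness of $\mathrm{H}^{1}_{\beta}(\mathbb{R}_+)\hookrightarrow\mathrm{L}^2_{\alpha}(\mathbb{R}_+)$ is equivalent to the compactness of $\mathrm{H}^{1}_{\beta-\alpha}(\mathbb{R}_+)\hookrightarrow\mathrm{L}^2(\mathbb{R}_+)$, and it suffices to treat the case $\alpha=0$ with exponent $\beta-\alpha>0$.

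\textbf{Step 3: invoking Proposition~\ref{prop:hjm-H1alphacompact}.} For $\gamma>0$, the bound $\exp(\gamma x)\ge 1$ on $\mathbb{R}_+$ gives continuous inclusions $\mathrm{H}^{1}_{\gamma}(\mathbb{R}_+)\subset\mathrm{H}^{1}(\mathbb{R}_+)\cap\mathrm{L}^2_{\gamma}(\mathbb{R}_+)$. By Proposition~\ref{prop:hjm-H1alphacompact}, the latter embeds compactly in $\mathrm{L}^{2}(\mathbb{R}_+)$, so $\mathrm{H}^{1}_{\gamma}(\mathbb{R}_+)\hookrightarrow\mathrm{L}^2(\mathbb{R}_+)$ is compact. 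Applied with $\gamma=\beta-\alpha$ and combined with the isomorphism from Step~2, this yields the case $k=0$, and together with Step~1 completes the proof.

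The main technical point is the bookkeeping in Step~2: one must check that the first-order term $\tfrac{\alpha}{2}f$ arising from the Leibniz rule is absorbed by the $\mathrm{L}^{2}_{\beta}$-norm already controlled by the $\mathrm{H}^{1}_{\beta}$-norm, so that $T_{\alpha}$ really furnishes an isomorphism of the relevant weighted Sobolev spaces. Once this is in place, both the reduction to $\alpha=0$ and the subsequent appeal to Proposition~\ref{prop:hjm-H1alphacompact} are essentially mechanical.
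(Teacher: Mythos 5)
Your argument is correct and follows essentially the same route as the paper's proof: the exponential multiplication map $f\mapsto\exp(\pm\alpha x/2)f$ to transfer the weight (you go from $\mathrm{L}^2_{\alpha}$ to $\mathrm{L}^2$, the paper goes the other way with the inverse map), the observation that $\mathrm{H}^1_{\gamma}(\mathbb{R}_+)$ embeds continuously into $\mathrm{H}^1(\mathbb{R}_+)\cap\mathrm{L}^2_{\gamma}(\mathbb{R}_+)$ so that Proposition~\ref{prop:hjm-H1alphacompact} applies, and an induction on $k$ (which you realise as a diagonal extraction over the derivatives). Your Step~2 merely spells out the Leibniz-rule bookkeeping that the paper leaves implicit when asserting the norm equivalence.
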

\begin{proof}
	Assume first $k=0$.
	Then, Proposition~\ref{prop:hjm-H1alphacompact} shows that $\mathrm{H}_{\beta-\alpha}^1(\mathbb{R}_+)$ is compactly embedded in $\mathrm{L}^2(\mathbb{R}_+)$.

	The mapping $T\colon\mathrm{L}^2(\mathbb{R}_+)\to\mathrm{L}_{\alpha}^2(\mathbb{R}_+)$, $f\mapsto \exp(-\frac{\alpha}{2} x)f$, is an isometric isomorphism, and $T(\mathrm{H}_{\beta-\alpha}^1(\mathbb{R}_+))=\mathrm{H}_{\beta}^1(\mathbb{R}_+)$, where the norms $\lVert T^{-1}f\rVert_{\mathrm{H}_{\beta-\alpha}^1(\mathbb{R}_+)}$ and $\lVert f\rVert_{\mathrm{H}_{\beta}^1(\mathbb{R}_+)}$ are equivalent.
	It follows that $\mathrm{H}_{\beta}^1(\mathbb{R}_+)$ is compactly embedded in $\mathrm{L}_{\alpha}^2(\mathbb{R}_+)$.
	The full result follows by a simple induction.
\qed\end{proof}
This compact embedding lets us derive rates of convergence of the Ninomiya-Victoir splitting scheme in the HJM setting of \cite{Filipovic2001,FilipovicTeichmann2004} (see also \cite{GoldysMusiela2001} for another setting where our approach should be equally applicable).
There, the space $H_w$ consisting of functions $f$ with $f'$ lying in some weighted Sobolev space is used.
We shall restrict ourselves to exponential weights.
We set $H_\alpha=\left\{ h\in\mathrm{L}^1_{\mathrm{loc}}(\mathbb{R}_+)\colon h'\in\mathrm{L}_\alpha^2(\mathbb{R}_+) \right\}$ for $\alpha>0$ with norm 
\begin{equation}
	\lVert h\rVert_{H_{\alpha}}:=\left( \lvert h(0)\rvert^2 + \int_{\mathbb{R}_+}\lvert h(x)\rvert^2\exp(\alpha x)\dd x \right)^{1/2}.
\end{equation}
Furthermore, we define $H_{\alpha}^0:=\left\{ h\in H_\alpha\colon h(+\infty)=0 \right\}$ (see \cite[Chapter~5]{Filipovic2001}).

Let $\sigma_j\colon H_\alpha\to H_\alpha^0$ be Lipschitz continuous and bounded, $j=1,\dots,d$.
Define the Heath-Jarrow-Morton drift 
\begin{equation}
\alpha_{\mathrm{HJM}}\colon H_\alpha\to H_\alpha,
\quad
\alpha_{\mathrm{HJM}}(h):=\sum_{j=1}^{d}\mathcal{S}\sigma^j(h), 
\end{equation}
where $\mathcal{S}f(x):=f(x)\int_{0}^{x}f(y)\dd y$.
The operator $A:=\frac{\dd}{\dd x}$ with domain $\dom A:=\left\{ h\in H_\alpha\colon h'\in H_{\alpha} \right\}$ is the infinitesimal generator of the shift semigroup on $H_{\alpha}$.
Then, the HJM equation
\begin{alignat}{2}{}
	\label{eq:hjm-eq}
	\dd r(t,r_0)
	&=
	(Ar(tr_0)+\alpha_{\mathrm{HJM}}(r(t,r_0)))\dd t + \sum_{j=1}^{d}\sigma_j(r(t,r_0))\dd W^j_t,
	\\
	r(0,r_0)
	&=
	r_0,
	\notag
\end{alignat}
where $(W^j_t)_{j=1,\dots,d}$ is a $d$-dimensional Brownian motion, has a unique solution (see \cite[Chapter~5]{Filipovic2001}).

Let $A_\beta$ be the restriction of $A$ to $\dom A_\beta:=\left\{ h\in H_\beta\colon h'\in H_\beta \right\}$.
It is clear that $A_\beta$ is the infinitesimal generator of the shift semigroup on $H_\beta$.
We shall assume now in addition that $\alpha_{\mathrm{HJM}}$ and $\sigma_j$, $j=1,\dots,d$ are Lipschitz continuous on $H_\beta$ and $\dom A_\beta^\ell$ for $\ell=1,\dots,m$ with some $m\ge 1$.
Such an assumption is actually not untypical and is even weaker than \cite[(A1), p.~135]{FilipovicTeichmann2004}.
\begin{theorem}
	For any $k\ge 0$, $\dom A_\beta^k$ is compactly embedded in $H_\alpha$.
\end{theorem}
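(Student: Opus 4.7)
The strategy is to reduce the claim to Corollary~\ref{cor:hjm-HbetaHalphacompact}, which provides compactness of the embedding $\mathrm{H}^{k+1}_\beta(\mathbb{R}_+)\hookrightarrow\mathrm{H}^{k}_\alpha(\mathbb{R}_+)$ for $\beta>\alpha$ between classical weighted Sobolev spaces. The key is to identify $\dom A_\beta^k$ with the space of pairs $(h(0),h')\in\mathbb{R}\oplus\mathrm{H}^{k}_\beta(\mathbb{R}_+)$: unravelling the iterated condition $h^{(j)}\in H_\beta$ for $j=0,\ldots,k$ gives $h^{(j)}\in\mathrm{L}^2_\beta(\mathbb{R}_+)$ for $j=1,\ldots,k+1$, so $h'$ lies in the classical weighted Sobolev space $\mathrm{H}^{k}_\beta(\mathbb{R}_+)$, and the map $\Phi\colon h\mapsto(h(0),h')$ is continuous from $\dom A_\beta^k$ to $\mathbb{R}\oplus\mathrm{H}^{k}_\beta(\mathbb{R}_+)$. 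Conversely, every $h\in H_\alpha$ arises as the image under the isometry $\Psi(c,g):=c+\int_0^\cdot g(y)\,\mathrm{d}y$ of its pair $(h(0),h')\in\mathbb{R}\oplus\mathrm{L}^2_\alpha(\mathbb{R}_+)$.

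Given a bounded sequence $(h_n)_{n\in\mathbb{N}}$ in $\dom A_\beta^k$ with $k\ge 1$, I would first extract a subsequence along which $h_n(0)\to c\in\mathbb{R}$ via Bolzano-Weierstrass, then invoke Corollary~\ref{cor:hjm-HbetaHalphacompact} with $k-1$ in place of $k$ to obtain the compact embedding $\mathrm{H}^{k}_\beta(\mathbb{R}_+)\hookrightarrow\mathrm{H}^{k-1}_\alpha(\mathbb{R}_+)\subset\mathrm{L}^2_\alpha(\mathbb{R}_+)$. Passing to a further subsequence yields $h_n'\to g$ in $\mathrm{L}^2_\alpha(\mathbb{R}_+)$. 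Setting $h:=\Psi(c,g)\in H_\alpha$ gives
\[
\|h_n-h\|^2_{H_\alpha} = |h_n(0)-c|^2 + \|h_n'-g\|^2_{\mathrm{L}^2_\alpha} \to 0,
\]
which is the desired compact embedding.

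The main obstacle lies in the case $k=0$: the above reduction would require the compact inclusion $\mathrm{L}^2_\beta\subset\subset\mathrm{L}^2_\alpha$, which fails in general since the derivative control exploited in Proposition~\ref{prop:hjm-H1alphacompact} for the translation estimate is no longer available. For the SPDE applications the required smoothness always corresponds to $k\ge 1$ (the Ninomiya-Victoir analysis earlier needs several $\dom A^{\ell}$-derivatives), so this gap is inessential in practice; if $k=0$ is genuinely needed, one would argue separately, for example by exploiting the existence of $h(\infty)$ for $h\in H_\beta$ when $\beta>0$ and splitting $h=(h-h(\infty))+h(\infty)$, working with the decaying part in $H_\beta^0$.
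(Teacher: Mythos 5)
Your argument for $k\ge 1$ is essentially the paper's proof: both identify $h\in\dom A_\beta^k$ with the pair $(h(0),h')$, note that $h'$ is then controlled in $\mathrm{H}^{k}_\beta(\mathbb{R}_+)$, and invoke Corollary~\ref{cor:hjm-HbetaHalphacompact} to upgrade to strong $\mathrm{L}^2_\alpha$-convergence of the derivatives (the paper first reduces to $k=1$ via the continuous embedding $\dom A_\beta^k\hookrightarrow\dom A_\beta$ and works with weakly convergent sequences, whereas you extract subsequences from bounded ones; in this Hilbert-space setting the two formulations of compactness are equivalent). Your caveat about $k=0$ is well founded: the paper's own proof also silently covers only $k\ge 1$, and $H_\beta\hookrightarrow H_\alpha$ is genuinely non-compact, since at the level of derivatives it is the multiplication operator $f\mapsto e^{(\alpha-\beta)x/2}f$ on $\mathrm{L}^2(\mathbb{R}_+)$, which is never compact; so the statement should be read with $k\ge 1$, and your proposed workaround via $h(\infty)$ would not restore compactness.
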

\begin{proof}
	As $\dom A_\beta^k$ is continuously embedded in $\dom A_\beta$ for any $k\ge 1$, we only have to prove the result for $k=1$.
	Let therefore $h_n\in\dom A_\beta$ be a sequence converging weakly to $h\in A_\beta$, that is, $h_n$ and $h_n'$ converge weakly to $h$ and $h'$ in the topology of $H_\beta$.
	Then, as point evaluations are continuous in $H_\beta$, we obtain that $\lim h_n(0)=h(0)$.
	As $h_n'$ converges weakly to $h'$ in $H_\beta$, we see that $h_n'$ and $h_n''$ converge weakly to $h'$ and $h''$ in $\mathrm{L}_\beta^2(\mathbb{R}_+)$, that is, $h_n'$ converges weakly to $h'$ in $\mathrm{H}_\beta^1(\mathbb{R}_+)$.
	By Corollary~\ref{cor:hjm-HbetaHalphacompact}, $h_n'$ converges strongly to $h'$ in $\mathrm{L}_\alpha^2(\mathbb{R}_+)$, and the result follows.
\qed\end{proof}
From Lipschitz continuity of the coefficients, we obtain easily that solutions of \eqref{eq:hjm-eq} depend Lipschitz continuously on the initial value.
Thus, weakly continuous dependence in $\dom A_\beta^k$ follows for any $k\ge 1$.
Similarly as in Section~\ref{sec:applnumericsspde}, by splitting into a part corresponding to the Stratonovich-corrected drift, $\frac{\dd}{\dd x}+\alpha_0$, and the parts corresponding to the diffusions, we obtain optimal weak rates of convergence in a supremum norm weighted by $\psi(h):=(1+\lVert h\rVert_{\dom A_\beta^{\ell}}^2)^{s/2}$, $\ell$, $s$ large enough, for sufficiently smooth functions if $\alpha_{\mathrm{HJM}}$ and $\sigma_j$ are smooth enough.

\section*{Acknowledgements}
The first author thanks Michael Kaltenb\"ack and Georg Grafendorfer for fruitful discussions on early drafts.

\bibliographystyle{spmpsci}
\bibliography{lit}

\end{document}